\documentclass[12pt]{amsart}
\setcounter{secnumdepth}{5}
\usepackage[T1]{fontenc}
\usepackage[latin1]{inputenc}
\usepackage{typearea}
\usepackage{geometry}
\usepackage{ulem}
\usepackage{amsmath}
\usepackage{amssymb}
\usepackage{latexsym}
\usepackage{enumerate}
\usepackage{amsthm}
\usepackage[all]{xy}
\usepackage{hhline}
\usepackage{epsf} 
\usepackage{cite}
\usepackage{verbatim}
\usepackage{dsfont}
\usepackage{mathrsfs}
\setlength{\textheight}{660pt}

\usepackage{color}

\newtheorem{theorem}{{\sc Theorem}}[section]
\newtheorem{cor}[theorem]{{\sc Corollary}}

\newtheorem{lemma}[theorem]{{\sc Lemma}}
\newtheorem{prop}[theorem]{{\sc Proposition}}

\theoremstyle{remark}
\newtheorem{remark}[theorem]{{\sc Remark}}

\theoremstyle{definition}

\newtheorem{example}[theorem]{\sc example}

%
%
\newcommand{\R}{\mathbb{R} }
\newcommand{\N}{\mathbb{N} }

\newcommand{\A}{\mathcal{A}}

\newcommand{\F}{\mathcal{F}}
\newcommand{\G}{\mathcal{G}}

\newcommand{\D}{\mathcal{D}}
\newcommand{\W}{\mathcal{W}}
\newcommand{\K}{\mathcal{K}}

\newcommand{\calS}{\mathcal{S}}

\newcommand{\Prob}{\mathbb{P}}
\newcommand{\E}{\mathbb{E}}

\newcommand{\Pot}{\mathcal{P}}

\providecommand{\abs}[1]{\lvert #1\rvert}
\providecommand{\babs}[1]{\bigl\lvert #1\bigr\rvert}
\providecommand{\Babs}[1]{\Bigl\lvert #1\Bigr\rvert}

\providecommand{\norm}[1]{\lVert #1\rVert}
\providecommand{\fnorm}[1]{\lVert #1\rVert_\infty}

\providecommand{\proj}[2]{\pro\bigl\{#1\,\bigl|\,C_{#2}\bigr\}}

\DeclareMathOperator{\Var}{Var}

\DeclareMathOperator{\dom}{dom}
\DeclareMathOperator{\Cov}{Cov}

\DeclareMathOperator{\Lip}{Lip}

\DeclareMathOperator{\Inf}{Inf}

\DeclareMathOperator{\pro}{proj}
\DeclareMathOperator{\Id}{Id}

%
%

\renewcommand{\phi}{\varphi}
\renewcommand{\epsilon}{\varepsilon}

\renewcommand{\rho}{\varrho}
\renewcommand{\P}{\mathbb{P}}

%
%
\providecommand{\absolute}[1]{\lvert #1\rvert}
\providecommand{\Bigabsolute}[1]{\Big\lvert #1\Big\rvert}
\renewcommand{\1}{\mathds{1}}

\usepackage{bm}
\uchyph=0
\newcommand{\ellnorm}[3]{\Vert{#3}\Vert_{\ell^{#1}(\N)^{\otimes{#2}}}}

\begin{document}
\title[Fourth moment condition]{On the fourth moment condition for Rademacher chaos}
\author{Christian D\"obler \and Kai Krokowski}
\thanks{\noindent Universit\'{e} du Luxembourg, Unit\'{e} de Recherche en Math\'{e}matiques, Ruhr-Universit\"at Bochum, Fakult\"at f\"ur Mathematik \\
E-mails: christian.doebler@uni.lu, kai.krokowski@rub.de\\
{\it Keywords: fourth moment theorem; Stein's method; discrete Malliavin calculus; Rademacher sequences; carr\'{e} du champ operator} }
\begin{abstract}  
Adapting the spectral viewpoint suggested in \cite{Led12} in the context of symmetric Markov diffusion generators and recently exploited in the non-diffusive setup of a Poisson random measure \cite{DP17}, 
we investigate the fourth moment condition for discrete multiple integrals with respect to general, i.e.\ non-symmetric and non-homogeneous, Rademacher sequences and show that, in this situation, the fourth moment 
alone does not govern the asymptotic normality. Indeed, here one also has to take into consideration the maximal influence of the corresponding kernel functions. In particular, we show that there is \textit{no} exact fourth moment 
theorem for discrete multiple integrals of order $m\geq2$ with respect to a symmetric Rademacher sequence. 
This behavior, which is in contrast to the Gaussian \cite{NP05} and Poisson \cite{DP17} situation, closely resembles the conditions for asymptotic normality of degenerate, non-symmetric $U$-statistics from the 
classical paper \cite{deJo90}.
\end{abstract}
\maketitle

\section{Introduction and main results}\label{intro}

\subsection{Motivation and outline}
The remarkable \textit{fourth moment theorem} \cite{NP05} by Nualart and Peccati states that a normalized sequence of multiple Wiener-It\^{o} integrals of fixed order on a Gaussian space converges in distribution to a standard normal random variable $N$, if and only if the corresponding sequence of fourth moments converges to $3$, i.e.\ to the fourth moment of $N$. 
The purpose of the present article is to discuss the validity of the fourth moment condition for sequences of discrete multiple integrals $(F_n)_{n \in \N}=(J_m(f_n))_{n \in \N}$ of order $m\in\N:=\{1,2,\dotsc\}$ of a general independent Rademacher sequence $X=(X_j)_{j\in\N}$, see below for precise definitions. 
As we will see, in contrast to the situation on a Gaussian space \cite{NP05} or on a Poisson space \cite{DP17}, in general, there is no \textit{exact} fourth moment theorem for Rademacher chaos. 
By this we mean that, in general, for a sequence $(F_n)_{n\in\N}$ of normalized discrete multiple integrals of a fixed order $m\in\N$ 
with respect to $X$, the convergence of $\E[F_n^4]$ to $3$ as $n\to\infty$ does not guarantee asymptotic normality of the sequence. However, the following positive result holds true: 
Whenever $\E[F_n^4]$ converges to $3$ and the \textit{maximal influence} $\sup_{k\in\N} \Inf_k(f_n)$ of the \textit{kernels} $f_n$ converges to $0$ as $n\to\infty$, then $F_n$ converges in distribution to $N$. 
Here, for a symmetric function $f:\N^m\rightarrow\R$ with 
\[\sum_{1\leq i_1<\ldots<i_m<\infty}f^2(i_1,\dotsc,i_m)<\infty\,,\]
the \textit{influence of the variable $k\in\N$ on $f$} is defined by
\begin{equation}\label{definf}
\Inf_k(f):=\sum_{\substack{(i_2,\dotsc,i_m)\in(\N\setminus\{k\})^{m-1}:\\ 1\leq i_2<\dotsc<i_m<\infty}}
f^2(k,i_2,\dotsc,i_m)\,.
\end{equation}
Interestingly, these \textit{influence functions} $k\mapsto \Inf_k(f)$ have raised a lot of attention recently. For instance, as demonstrated in the seminal papers \cite{MOO10} and \cite{NPR-aop}, they play a major role 
for the universality of multilinear polynomial forms with bounded degree. Furthermore, see again \cite{MOO10}, many recent problems and conjectures involving boolean functions with applications to theoretical computer science and social 
choice theory are only stated for \textit{low influence functions}, i.e.\ functions such that $\sup_{k\in\N}\Inf_k(f)$ is small.
The main reasons for this are that restricting oneself to low influence functions often excludes trivial and therefore non-relevant counterexamples, and, that these functions seem to be most interesting in applications. 

\subsection{Further historical comments and related results}\label{history}
In recent years, the fundamental result from \cite{NP05} has been amplified in many respects: On the one hand, it has been generalized to a multidimensional statement by Peccati and Tudor \cite{PecTu05} and, on the other hand, by combining Malliavin calculus and Stein's method of normal approximation, Nourdin and Peccati \cite{NP-ptrf} succeeded in providing error bounds on various probability  distances, including the total variation and Kolmogorov distances, between the law of a general smooth (in a Malliavin sense) functional on a Gaussian space and the standard normal distribution. In the special case of a multiple Wiener-It\^{o} integral their bounds can be expressed in terms 
of the fourth cumulant of the integral only.
We refer to the monograph \cite{NouPecbook} for a comprehensive treatment of results obtained by combining Malliavin calculus on a Gaussian space and Stein's method. 
This so-called \textit{Malliavin-Stein method} originating from the seminal paper \cite{NP-ptrf} is not restricted to a Gaussian framework, but roughly speaking, it may be set up whenever a version of Malliavin calculus is available for the respective probabilistic structure. To wit, shortly after the appearance of \cite{NP-ptrf}, in the papers \cite{PSTU} and \cite{NPR-ejp}, the respective groups of authors succeeded in combining Malliavin calculus on a general Poisson space and for functionals of a Rademacher sequence with Stein's method in order to obtain error bounds for the normal approximation of smooth functionals in terms of certain Malliavin objects, thereby mimicking the approach taken in \cite{NP-ptrf} on a Gaussian space. In the years to follow, the techniques and results of the two papers \cite{PSTU} and \cite{NPR-ejp} have been generalized and extended e.g.\ to multidimensions and non-smooth probability metrics by various works (see e.g. \cite{Sch16, ET14, PZ1, Zheng15, KRT1, KRT2, KT17}) and, in particular, the Poisson framework has found many fields of relevant applications. We refer to the recent book \cite{PecRei16} for both the theoretical framework and applications of the so-called Malliavin-Stein method on a Poisson space. 
In the seminal paper \cite{Led12}, Ledoux assumed a purely spectral viewpoint in order to derive fourth moment theorems in the framework of functionals of the stationary distribution of some diffusive Markov generator $L$. This approach has then been extended and simplified by the works \cite{ACP} and \cite{CNPP}. Indeed, the spectral viewpoint involving the carr\'{e} du champ operator associated to $L$ was key to proving the fourth moment bound on the Poisson space in \cite{DP17} and 
is also the starting point for our methods in the present article.

Despite the establishment of accurate bounds which have led to both new theoretical insights as well as to new quantitative limit theorems for various models in applications, the question of whether there is a fourth moment theorem also in the discrete Poisson and Rademacher situations has remained open for several years. On the Poisson space indeed, as indicated above, the recent paper \cite{DP17} provided exact, quantitative fourth moment bounds on both the Wasserstein and Kolmogorov 
distances and, in particular, gave a positive answer to this question on the Poisson space. By \textit{exact} we mean that the bounds on the Kolmogorov and Wasserstein distances between the distribution of a normalized multiple Wiener-It\^{o} integral $F$ and the standard normal distribution given in \cite{DP17} are expressed in terms of the fourth cumulant of $F$ only, and hence, no additional term which might account for the discrete nature of general Poisson measures is needed. This fact is even more remarkable in view of de Jong's celebrated CLT for degenerate, non-symmetric $U$-statistics \cite{deJo90} (called homogeneous sums or generalized multilinear forms by de Jong \cite{deJo89, deJo90}) which on top of the fourth moment condition also involves a Lindeberg-Feller type condition, guaranteeing that the maximal influence of each of the independent data random variables on the total variance vanishes asymptotically and which cannot be dispensed with in general.  
In the recent paper \cite{DP16}, the first author and G. Peccati were able to prove a quantitative version of de Jong's result as well as a quantitative extension to multidimensions. This version will be used in Subsection \ref{altproof} 
in order to give an alternative proof of the Wasserstein bound from our main result, Theorem \ref{mt}.

\subsection{Statements of our main results}\label{mainresults}
We now proceed by presenting and discussing our main results. First, we briefly describe the mathematical framework of the paper. For more details and precise definitions we refer to Section \ref{framework} and to the references given there. 
In what follows, we fix a sequence $X=(X_k)_{k\in\N}$ of independent $\{-1,+1\}$-valued random variables on a suitable probability space $(\Omega,\F,\P)$ such that, for 
$k\in\N$, $X_k$ is a Rademacher random variable with success parameter $p_k\in(0,1)$, i.e.
\begin{equation*}
\P(X_k=+1):=p_k\quad\text{and}\quad \P(X_k=-1):=q_k:=1-p_k\,.
\end{equation*}
Furthermore, we denote by $p=(p_k)_{k \in \N}$ and $q=(q_k)_{k \in \N}$ the corresponding sequences of success and failure probabilities.
A sequence $X$ as above is customarily called an \textit{asymmetric, inhomogeneous Rademacher sequence}. We call it \textit{homogeneous} whenever $p_k=p_1$ for all $k\in\N$ and \textit{symmetric} if $p_k=q_k=1/2$ for all $k\in\N$.  
Furthermore, for $m\in\N$, a \textit{symmetric} function $f\in\ell_2(\N^m)$ \textit{vanishing on diagonals}, i.e.\ $f(i_1,\dotsc,i_m)=0$ whenever there are $k\not=l$ in $\{1,\dotsc,m\}$ such that $i_k=i_l$, 
is called a \textit{kernel of order $m$} and the collection of kernels of order $m$ will be denoted by $\ell_0^2(\N)^{\circ m}$. Finally, by $J_m(f)$ we denote the \textit{discrete multiple integral of order $m$} of $f$ with respect to the 
sequence $X$, i.e.\ we have 
\begin{align}\label{intdef}
J_m(f)&:=\sum_{(i_1,\dotsc,i_m)\in\N^m} f(i_1,\dotsc,i_m)Y_{i_1}\cdot\ldots\cdot Y_{i_m}\notag\\
&=m!\sum_{1\leq i_1<\ldots<i_m<\infty}f(i_1,\dotsc,i_m)Y_{i_1}\cdot\ldots\cdot Y_{i_m}\,,
\end{align}
where we denote by $Y=(Y_k)_{k\in\N}$ the normalized sequence corresponding to $X$, given explicitly by 
\begin{equation}\label{defy}
Y_k=\frac{X_k-p_k+q_k}{2\sqrt{p_k q_k}}\,,\quad k\in\N\,.
\end{equation}
Recall that for two real random variables $X$ and $Y$, the \textit{Kolmogorov distance} between their distributions is the supremum norm distance between the corresponding distribution functions, i.e.
\begin{equation*}
 d_\K(X,Y):=\sup_{x\in\R}\babs{\P(X\leq x)-\P(Y\leq x)}\,,
\end{equation*}
and, if $X$ and $Y$ are integrable, then the \textit{Wasserstein distance} between (the distributions of) $X$ and $Y$ is defined as 
\begin{equation*}
 d_\W(X,Y):=\sup_{h\in\Lip(1)}\babs{\E[h(X)]-\E[h(Y)]}\,,
\end{equation*}
where we denote by $\Lip(1)$ the class of all Lipschitz-continuous functions $h:\R\rightarrow\R$ with Lipschitz-constant $1$.
The following theorem and its corollary are the main results of the present paper.

\begin{theorem}[Fourth-moment-influence bound]\label{mt}
 Let $m\in\N$ and let $F=J_m(f)$ be a discrete multiple integral of order m, where $f\in\ell_0^2(\N)^{\circ m}$ is the corresponding kernel 
 such that $\E[F^2]=m!\norm{f}^2_{\ell^2(\N^m)}=1$.  Furthermore, 
 denote by $N\sim N(0,1)$ a standard normal random variable. Then, we have the bound 
 \begin{align}\label{mb}
  d_\W(F,N)&\leq   C_1(m)\sqrt{\babs{\E\bigl[F^4\bigr]-3}}+ C_2(m)\sqrt{\sup_{j\in\N}\Inf_j(f)}\,,
 \end{align}
where the constants $C_1(m)$ and $C_2(m)$ are given by 
\begin{align}\label{constants}
 C_1(m)&=\sqrt{\frac{2}{\pi}}\frac{2m-1}{2m}+\sqrt{\frac{4m-3}{m}}\,,\notag\\
 C_2(m)&=\biggl(\sqrt{\frac{2}{\pi}}\frac{2m-1}{2m} +\sqrt{ \frac{6m-3}{m}}\biggr)\sqrt{\gamma_m}
\end{align}
and $\gamma_m\in(0,\infty)$ is another constant only depending on $m$ (see \eqref{gamma_n} for a possible choice of this constant).\\
Moreover,
\begin{align}\label{KolmogorovTheorem}
d_\K(F,N) &\leq \Big(K_1(m) + K_2(m)((\E[F^4])^{1/4}+1)(\E[F^4])^{1/4}\Big)\sqrt{\absolute{\E[F^4]-3}} \notag\\
&\phantom{{}\leq{}} + \Big(K_3(m) + K_4(m)((\E[F^4])^{1/4}+1)(\E[F^4])^{1/4}\Big)\sqrt{\sup_{j\in\N}\Inf_j(f)}\,,
\end{align}
where the constants $K_1(m), K_2(m), K_3(m)$ and $K_4(m)$ are given by
\begin{align}\label{KolmogorovKonstants}
K_1(m) &= \frac{2m-1+2\sqrt{(8m^2-7)(4m-3)}}{2m}\,, \notag\\
K_2(m) &= \frac{\sqrt{4m^2-3m}}{2m}\,, \notag\\
K_3(m) &= \frac{2m-1+2\sqrt{(8m^2-7)(6m-3)}}{2m}\sqrt{\gamma_m}\,, \notag\\
K_4(m) &= \frac{\sqrt{6m^2-3m}}{2m}\sqrt{\gamma_m}\,.
\end{align}
\end{theorem}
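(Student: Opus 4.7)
The strategy is the Malliavin-Stein method on Rademacher space, combined with the spectral/carr\'e du champ viewpoint of \cite{Led12, DP17}. Denote by $D_k$ the discrete Malliavin derivative and by $L$ the associated Ornstein-Uhlenbeck generator; since $F=J_m(f)$ is an eigenfunction with $LF=-mF$, one has $-L^{-1}F=F/m$. For any $h\in\Lip(1)$, the Stein solution $f_h$ of $f'(x)-xf(x)=h(x)-\E[h(N)]$ satisfies $\fnorm{f_h'}\leq\sqrt{2/\pi}$ and $\fnorm{f_h''}\leq 2$. Applying discrete integration by parts on Rademacher space, together with $LF=-mF$, I obtain the decomposition
\[\E\bigl[f_h'(F)-Ff_h(F)\bigr]=\E\Bigl[f_h'(F)\Bigl(1-\tfrac{1}{m}\langle DF,DF\rangle_{\ell^2(\N)}\Bigr)\Bigr]-R(f_h)\,,\]
where the ``non-diffusive'' remainder $R(f_h)$ arises because $D_k(f_h(F))$ is a divided difference rather than $f_h'(F)D_kF$; a first-order Taylor expansion of this divided difference yields $|R(f_h)|\leq\tfrac{1}{2}\fnorm{f_h''}\sum_{k\in\N}\E\bigl[\absolute{D_kF}^3\bigr]$.

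I will handle the non-diffusive remainder via $\sum_k\E[\absolute{D_kF}^3]\leq\bigl(\sup_j\fnorm{D_jF}\bigr)\sum_k\E[(D_kF)^2]=m\sup_j\fnorm{D_jF}$, where the last equality uses $\sum_k\E[(D_kF)^2]=m\E[F^2]=m$. Since $D_jF$ is a discrete multiple integral of order $m-1$ whose $L^2$-norm is a fixed positive multiple of $\sqrt{\Inf_j(f)}$, a hypercontractive bound of the form $\fnorm{D_jF}\leq\sqrt{\gamma_m\Inf_j(f)}$ --- with $\gamma_m$ the constant appearing in \eqref{gamma_n} --- produces the $\sqrt{\sup_j\Inf_j(f)}$ contribution. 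The main term is estimated by Cauchy--Schwarz,
\[\Babs{\E\Bigl[f_h'(F)\Bigl(1-\tfrac{1}{m}\langle DF,DF\rangle\Bigr)\Bigr]}\leq\fnorm{f_h'}\cdot\frac{1}{m}\sqrt{\Var\bigl(\langle DF,DF\rangle_{\ell^2(\N)}\bigr)}\,,\]
reducing the problem to bounding the variance of the carr\'e du champ.

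The main technical obstacle is the variance inequality
\[\frac{1}{m^2}\Var\bigl(\langle DF,DF\rangle_{\ell^2(\N)}\bigr)\leq\frac{4m-3}{m}\babs{\E[F^4]-3}+\gamma_m\cdot\frac{6m-3}{m}\sup_{j\in\N}\Inf_j(f)\,,\]
which combined with the above delivers the constants in \eqref{constants}. To prove it I will apply the Rademacher product formula to expand each $(D_kF)^2$ as a finite Wiener chaos sum, sum over $k\in\N$, and compute the squared $\ell^2$-norms of the resulting chaos components via orthogonality. The ``off-diagonal'' contractions recombine into $\E[F^4]-3\E[F^2]^2=\E[F^4]-3$, in analogy with the classical Gaussian argument of \cite{NP05}. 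The obstruction, and the point of departure from the Poisson case of \cite{DP17}, is that the Rademacher product formula carries additional diagonal contributions weighted by the higher moments $\E[Y_k^3]=(q_k-p_k)/\sqrt{p_k q_k}$ and $\E[Y_k^4]-3$, which do not cancel and must be estimated separately by $\sup_j\Inf_j(f)$. This is precisely the step that forces the extra influence correction in \eqref{mb}.

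For the Kolmogorov bound I will use Stein solutions for indicator test functions $h=\1_{(-\infty,z]}$, satisfying $\fnorm{f_h}\leq\sqrt{2\pi}/4$ and $0<f_h'\leq 1$ but admitting no $\fnorm{f_h''}$-bound. The non-diffusive remainder is then handled by a smoothing argument: replace $h$ by a Lipschitz smoothing at scale $\delta>0$, apply the preceding Wasserstein-type estimates to the smoothed Stein solution (whose relevant $C^2$-norms now scale like $1/\delta$), and control the smoothing error through Markov's inequality applied to $F-z$, invoking the hypercontractive bound $\fnorm{F}\leq c(m)(\E[F^4])^{1/4}$. Optimizing in $\delta$ yields the multiplicative factors $((\E[F^4])^{1/4}+1)(\E[F^4])^{1/4}$ appearing in $K_2(m)$ and $K_4(m)$, while $K_1(m)$ and $K_3(m)$ come from the portions of the argument that transfer directly from the Wasserstein setting with only the Stein constants for bounded test functions updated.
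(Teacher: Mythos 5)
There is a genuine gap, and it sits at the point where your argument produces the influence term. You control the non-diffusive remainder by $\sum_k\E\abs{D_kF}^3\leq\bigl(\sup_j\fnorm{D_jF}\bigr)\sum_k\E[(D_kF)^2]$ and then invoke a ``hypercontractive'' bound $\fnorm{D_jF}\leq\sqrt{\gamma_m\Inf_j(f)}$. No such bound exists: $D_jF=mJ_{m-1}(f(j,\cdot))$ is a multiple integral whose $L^2$-norm is comparable to $\sqrt{\Inf_j(f)}$, but its $L^\infty$-norm is controlled by the $\ell^1$-norm of the coefficients (already for $m=1$ and a symmetric sequence, $\fnorm{f(j)Y_j}=\abs{f(j)}$ is fine, but for $m\geq 2$ one gets $\sum_{i}\abs{f(j,i)}$-type quantities, and for inhomogeneous sequences an extra unbounded factor $\max(\sqrt{p_j/q_j},\sqrt{q_j/p_j})$ appears); hypercontractivity only transfers between finite $L^p$-norms and, for general $(p_k)$, not even that. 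The same objection applies to the bound $\fnorm{F}\leq c(m)(\E[F^4])^{1/4}$ in your Kolmogorov argument: $F$ is generically unbounded. The correct mechanism (and the one the paper uses) is different: after Cauchy--Schwarz one is left with $\sum_k\frac{1}{p_kq_k}\E\abs{D_kF}^4$ (note the weights $1/\sqrt{p_kq_k}$ in the remainder, which you have dropped and which are essential in the inhomogeneous case), and this quantity is \emph{identified} via integration by parts against $F^3$ as $2m\bigl(\frac{3}{m}\E[F^2\Gamma(F,F)]-\E[F^4]\bigr)$, which is then bounded by $\abs{\E[F^4]-3}$ plus $\gamma_m\sup_j\Inf_j(f)$ through the chaos decomposition of $F^2$ and a purely combinatorial estimate on $\norm{f\tilde{\otimes}f\1_{\Delta_{2m}^c}}^2$. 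That is where the influence enters; it does not come from a pointwise bound on $D_jF$.

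Two further, smaller issues. First, your variance inequality attaches the constants $(4m-3)/m$ and $(6m-3)/m$ to $\Var(\langle DF,DF\rangle)$; in fact those constants belong to the fourth-power remainder term, while the variance of the carr\'{e} du champ contributes the factor $(2m-1)/(2m)$ (and for non-symmetric sequences the carr\'{e} du champ is \emph{not} $\langle DF,DF\rangle$ but carries an additional term $\frac12\sum_k\frac{q_k-p_k}{\sqrt{p_kq_k}}(D_kF)^2Y_k$). Your plan to prove the variance bound via the product formula also needs the square-integrability of contraction kernels, which does not follow from $F\in L^4$; the spectral route through the projections $\pro\{F^2|C_n\}$ avoids this. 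Second, the smoothing argument you sketch for the Kolmogorov distance is not what yields the stated constants: the paper works directly with the non-smooth Stein solution, bounds the divided-difference remainder pathwise by $\frac{1}{2\sqrt{p_kq_k}}(D_kF)^2(\abs{F}+\frac{\sqrt{2\pi}}{4})+\frac{1}{\sqrt{p_kq_k}}(D_kF)(D_k\1_{\{F>x\}})$, and controls the indicator term by a Skorohod-integral (integration by parts) estimate; a smoothing-and-optimizing argument would at best give a different, typically worse, bound.
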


\begin{cor}[Fourth-moment-influence theorem]\label{mtcor}
 Fix an integer $m\geq1$ and, for $n\in\N$, let $F_n=J_m(f_n)$, where $f_n\in\ell_0^2(\N)^{\circ m}$, be a discrete multiple integral of order $m$ such that the following asymptotic properties hold:
 \begin{enumerate}[{\normalfont (i)}]
  \item $\lim_{n\to\infty} \E[F_n^2]=m!\lim_{n\to\infty}\norm{f_n}^2_{\ell^2(\N^m)}=1$.
  \item $\lim_{n\to\infty}\E[F_n^4]=3$.
  \item $\lim_{n\to\infty} \sup_{k\in\N}\Inf_k(f_n)=0$.
 \end{enumerate}
 Then, as $n\to\infty$, $F_n$ converges in distribution to $N$, where $N$ is a standard normal random variable.
\end{cor}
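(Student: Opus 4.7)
The corollary is essentially a direct consequence of Theorem \ref{mt}, with a small normalization wrinkle to handle because the theorem requires the exact normalization $\E[F^2]=1$, while assumption (i) only gives this asymptotically.

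First I would renormalize. For $n$ large enough, $\sigma_n^2:=\E[F_n^2]$ is positive, so set $\tilde F_n:=F_n/\sigma_n=J_m(\tilde f_n)$ with $\tilde f_n:=f_n/\sigma_n\in\ell_0^2(\N)^{\circ m}$. By construction $\E[\tilde F_n^2]=m!\,\norm{\tilde f_n}^2_{\ell^2(\N^m)}=1$, so Theorem \ref{mt} applies and yields
\begin{equation*}
d_\W(\tilde F_n,N)\leq C_1(m)\sqrt{\babs{\E[\tilde F_n^4]-3}}+C_2(m)\sqrt{\sup_{k\in\N}\Inf_k(\tilde f_n)}\,.
\end{equation*}

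Second, I would check that the right-hand side vanishes. Using $\E[\tilde F_n^4]=\E[F_n^4]/\sigma_n^4$ together with assumptions (i) and (ii), one gets $\E[\tilde F_n^4]\to 3/1=3$. Similarly, the definition \eqref{definf} of the influence functions is quadratic in $f$, so $\Inf_k(\tilde f_n)=\Inf_k(f_n)/\sigma_n^2$ and assumptions (i) and (iii) give $\sup_{k\in\N}\Inf_k(\tilde f_n)\to 0$. Consequently $d_\W(\tilde F_n,N)\to 0$, which in particular implies that $\tilde F_n$ converges in distribution to $N$.

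Finally, I would pass back to $F_n$. Since $F_n=\sigma_n\tilde F_n$ and $\sigma_n\to 1$ by assumption (i), Slutsky's theorem gives $F_n\to N$ in distribution as well. I do not expect a genuine obstacle here; the only subtlety is the normalization step, and (as an alternative to Slutsky) one could also argue directly in Wasserstein distance via $d_\W(F_n,\tilde F_n)\leq\E\babs{F_n-\tilde F_n}=|\sigma_n-1|\,\E|\tilde F_n|\leq|\sigma_n-1|$ (once the second moments are controlled), combined with the triangle inequality $d_\W(F_n,N)\leq d_\W(F_n,\tilde F_n)+d_\W(\tilde F_n,N)$, to obtain the quantitative statement $d_\W(F_n,N)\to 0$.
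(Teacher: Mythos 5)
Your proposal is correct and follows exactly the route the paper intends: Corollary \ref{mtcor} is stated without proof as an immediate consequence of Theorem \ref{mt}, and your renormalization $\tilde F_n=F_n/\sigma_n$ together with the scaling identities $\E[\tilde F_n^4]=\E[F_n^4]/\sigma_n^4$ and $\Inf_k(\tilde f_n)=\Inf_k(f_n)/\sigma_n^2$ correctly handles the only (minor) gap between the hypotheses of the theorem and those of the corollary. The concluding step via Slutsky (or the Wasserstein triangle inequality) is sound.
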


\begin{remark}\label{mtrem}
 \begin{enumerate}[(a)]
  \item Theorem \ref{mt} and Corollary \ref{mtcor} are analogous to the fourth moment bounds/theorems on the Gaussian space (see \cite{NP05} and \cite{NP-ptrf}) and on the Poisson space (see \cite{DP17}). They are also closely connected to 
  de Jong's CLT \cite{deJo90} and its recent quantitative extension \cite{DP16}. Indeed, we will show in Subsection \ref{altproof} how the quantitative version of de Jong's CLT from \cite{DP16} may be applied in order to give 
  an alternative proof of the Wasserstein bound of Theorem \ref{mt} (with slightly different constants). We did not see, however, how to extend this argument to yield a bound on the Kolmogorov distance as well.
  \item Using the hypercontractivity of discrete multiple integrals with respect to a symmetric Rademacher sequence, it is not difficult to see that in the symmetric case and under Condition (i) in Corollary \ref{mtcor},
  the fourth moment condition (ii) is also necessary for the asymptotic normality of $(F_n)_{n\in\N}$. This argument has already been used in \cite{KRT1} in order to find a necessary condition for the asymptotic normality of 
  double integrals in terms of norms of contraction kernels. 
  \item We stress that, in general and in contrast to what has been proved on a Gaussian and on a Poisson space (see \cite{NP05} and \cite{DP17}), the fourth moment condition (ii), however, is not sufficient in order to guarantee asymptotic normality of the sequence $(F_n)_{n\in\N}$. A counterexample for every order $m$ will be given in Example \ref{counter} below and moreover, in Theorem \ref{symtheo}, we show that, in the symmetric case, the fourth moment condition (ii) is sufficient 
  for asymptotic normality if and only if $m=1$.
  \item If $m=1$ and $X$ is a homogeneous Rademacher sequence such that $\E[Y_1^4]\not=3$, then one can do without Condition (iii) in Corollary \ref{mtcor}, i.e.\ in this case an exact fourth moment theorem holds true. This is the content 
  of Corollary \ref{mtcor2}.
  \item It has been known for several years that Condition (iii) above is not necessary in order to have asymptotic normality of $(F_n)_{n\in\N}$. Indeed, let $X$ be symmetric and fix $m\geq2$. Also, for $n\geq m$, we let $F_n$ be given by 
  \begin{align*}
   F_n&=\frac{X_1\cdot\ldots\cdot X_{m-1}}{\sqrt{n-m+1}}\sum_{j=m}^n X_j=J_m(f_n)
   \quad\text{with}\quad\\
   f_n(i_1,\dotsc,i_m)& =\begin{cases}
      \frac{1}{m!\sqrt{n-m+1}}\,,& \text{if }\{i_1,\dotsc,i_m\}=\{1,\dotsc,m-1,l\}\text{ for }m\leq l\leq n\,,\\
      0\,,&\text{otherwise.}
   \end{cases}
    \end{align*}
Then, $X_1\cdot\ldots\cdot X_{m-1}$ is again a symmetric Rademacher random variable (a random sign) which is independent of the sum. Hence, by the classical CLT we conclude that $F_n$ converges in distribution to $N\sim N(0,1)$. 
However, we have $\Inf_1(f_n)=(m!)^{-2}$ for each $n\geq m$. This Example already appears in the monograph \cite[Example 2.1.1]{deJo89} as well as in \cite{KRT1} (for $m=2$) and has also been given in \cite{NPR-aop} in order to show that homogeneous 
polynomial forms in independent Rademacher variables are not universal.
\end{enumerate}
\end{remark}

The next results states that, unless $\E[Y_1^4]=3$, an exact fourth moment theorem holds for integrals of order $m=1$ whenever the Rademacher sequence is homogeneous. 
This, in particular, includes the symmetric case $\P(X_1=1)=\P(X_1=-1)=1/2$. From Example \ref{counter} below it will follow that the restriction $\E[Y_1^4]\not=3$ is necessary.

\begin{cor}\label{mtcor2}
Let $X$ be a homogeneous Rademacher sequence such that $\lambda:=\E[Y_1^4]\not=3$ (which is equivalent to $p_1\not=\frac{1}{2}\pm\frac{1}{2\sqrt{3}}$). Moreover, let $f_n\in\ell_2(\N)$ be a sequence of kernels such that $\lim_{n\to\infty} \norm{f_n}_{\ell^2(\N)}=1$ and $\lim_{n\to\infty}\E[F_n^4]=3$, where $F_n:=J_1(f_n)$, $n\in\N$. Then, as $n\to\infty$, $F_n$ converges in distribution to $N\sim N(0,1)$. 
\end{cor}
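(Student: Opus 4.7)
The plan is to reduce the assertion to Corollary \ref{mtcor} by verifying that condition (iii), the vanishing of the maximal influence, is automatic in the homogeneous first-order setting once the second and fourth moment hypotheses hold, provided $\lambda \neq 3$. Since $m=1$, the definition \eqref{definf} degenerates to $\Inf_k(f_n) = f_n(k)^2$, so the goal becomes showing $\sup_{k\in\N} f_n(k)^2 \to 0$.

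First I would expand $\E[F_n^4]$ directly. Writing $F_n = \sum_k f_n(k) Y_k$ and using independence, together with the elementary moments $\E[Y_k]=0$, $\E[Y_k^2]=1$, and (by homogeneity) $\E[Y_k^4]=\lambda$ for every $k$, only the diagonal term $i=j=k=l$ and the three ``two-pair'' patterns contribute. Collecting terms yields the closed form
\begin{equation*}
\E\bigl[F_n^4\bigr] = 3\,\norm{f_n}_{\ell^2(\N)}^4 + (\lambda-3)\sum_{k\in\N} f_n(k)^4\,.
\end{equation*}
Combining this with the hypotheses $\norm{f_n}_{\ell^2(\N)}^2 \to 1$ and $\E[F_n^4]\to 3$, and using that $\lambda \neq 3$, I obtain $\sum_k f_n(k)^4 \to 0$. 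Since $\sup_k f_n(k)^4 \leq \sum_k f_n(k)^4$, this immediately gives $\sup_k \Inf_k(f_n) = \sup_k f_n(k)^2 \to 0$.

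Once this is in place, all three hypotheses of Corollary \ref{mtcor} are satisfied for $m=1$, so $F_n$ converges in distribution to $N\sim N(0,1)$. For completeness I would also verify the parenthetical equivalence in the statement, namely $\lambda \neq 3 \Leftrightarrow p_1 \neq \tfrac{1}{2}\pm \tfrac{1}{2\sqrt{3}}$: a direct computation from \eqref{defy} gives $\E[Y_1^4]=(p_1 q_1)^{-1}-3$, so $\lambda = 3$ amounts to $p_1 q_1 = 1/6$, whose two roots in $(0,1)$ are exactly the excluded values.

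There is no genuine obstacle here; the proof is essentially a one-line moment computation feeding into Theorem \ref{mt}. The only mildly delicate point is ensuring that the $\lambda - 3$ prefactor appears with the correct sign and can be divided out, which is precisely why the hypothesis $\lambda \neq 3$ is indispensable. Conversely, this same identity shows why the hypothesis cannot be dropped: if $\lambda=3$, then $\E[F_n^4]$ equals $3\norm{f_n}^4$ regardless of how the mass of $f_n$ is distributed, so the fourth moment condition alone places no control whatsoever on the influences, and Example \ref{counter} will exploit exactly this degeneracy.
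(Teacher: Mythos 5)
Your proposal is correct and follows essentially the same route as the paper: both derive the identity $\E[F_n^4]-3\norm{f_n}_{\ell^2(\N)}^4=(\lambda-3)\sum_k f_n(k)^4$, divide by $\lambda-3$ to control $\sum_k f_n(k)^4$, bound $\sup_k\Inf_k(f_n)$ by $\bigl(\sum_k f_n(k)^4\bigr)^{1/2}$, and feed the result into Corollary \ref{mtcor}. The only cosmetic differences are that the paper normalizes $\norm{f}_{\ell^2(\N)}=1$ before computing (via the variance of $F^2$ split into uncorrelated diagonal and off-diagonal parts) while you keep the $\norm{f_n}^4$ term explicit, and that you additionally verify the parenthetical equivalence $\lambda\not=3\Leftrightarrow p_1q_1\not=1/6$, which the paper states without proof.
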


\begin{proof}
Fix $f\in\ell^2(\N)$ and consider $F=\sum_{j\in\N}f(j) Y_j$, where we assume that $\sum_{j\in\N} f(j)^2=\Var(F)=1$. Then, 
\begin{equation*}
 F^2=\sum_{j\in\N} f(j)^2 Y_j^2+\sum_{\substack{i,j\in\N:\\i\not=j}}f(i)f(j)Y_iY_j\,,
\end{equation*}
and it is easy to see that these two sums are uncorrelated. Hence, we conclude
\begin{align*}
 \E[F^4]-1&=\Var(F^2)=\sum_{j\in\N}f(j)^4\bigl(\E[Y_j^4]-1\bigr)+2\sum_{\substack{i,j\in\N:\\i\not=j}}f(i)^2f(j)^2\\
 &=(\lambda-1)\sum_{j\in\N} f(j)^4 +2\Bigl(\sum_{j\in\N} f(j)^2\Bigr)^2-2\sum_{j\in\N} f(j)^4\\
 &=(\lambda-3)\sum_{j\in\N} f(j)^4+2\,.
\end{align*}
Hence, 
\begin{equation*}
 \E[F^4]-3=(\lambda-3)\sum_{j\in\N} f(j)^4\,.
\end{equation*}
Now, we have the simple chain of inequalities
\begin{equation*}
\sup_{k\in\N}\Inf_k(f)^2=\sup_{k\in\N} f(k)^4\leq\sum_{j\in\N} f(j)^4\leq \sup_{k\in\N} f(k)^2=\sup_{k\in\N}\Inf_k(f)\,.
\end{equation*}
In particular, since $\lambda\not=3$, we can conclude that 
\begin{align*}
 \sup_{k\in\N}\Inf_k(f)\leq\Bigl(\sum_{j\in\N} f(j)^4\Bigr)^{1/2}= \frac{\sqrt{\babs{\E[F^4]-3}}}{\sqrt{\abs{\lambda-3}}}\,.
\end{align*}
Hence, the result follows from Corollary \ref{mtcor}  by replacing $f$ with the sequence $f_n$, $n\in\N$ and using $\lim_{n\to\infty} \norm{f_n}_{\ell^2(\N)}=1$.
\end{proof}

The following two results demonstrate that, in general even for homogeneous Rademacher sequences, 
there is no exact fourth moment theorem for discrete multiple integrals of order $m\geq2$, i.e.\ that the result in Corollary \ref{mtcor2} is rather exceptional.

\begin{example}[Counterexample to fourth moment condition]\label{counter}
In this example we show that for each fixed integer $m\geq1$ there exist a homogeneous Rademacher sequence $X$ as well as a discrete multiple integral $F$ of order $m$ with $\E[F]=0$, $\Var(F)=1$, $\E[F^4]=3$ such that 
$F$ is not standard normally distributed. By choosing the sequence $F_n:=F$, $n\in\N$, this implies in particular that the fourth moment theorem in general does not hold for Rademacher chaos.
Let an integer $m\geq1$ be given and choose $p_k:=\frac{1}{2} \pm \frac{\sqrt{3^{1/m}-1}}{2\sqrt{3^{1/m}+3}}$ for all $k\in\N$. Since $X_k^2 \equiv 1$ we have  
\begin{align*}
Y_k^2 = 1 + \frac{q_k-p_k}{\sqrt{p_kq_k}}Y_k\,,
\end{align*}
and thus,
\begin{align}\label{y4th}
\E[Y_k^4] = 1 + 2\frac{q_k-p_k}{\sqrt{p_kq_k}} \E[Y_k] + \frac{(q_k-p_k)^2}{p_kq_k} \E[Y_k^2] = 1 + \frac{(q_k-p_k)^2}{p_kq_k}\,,
\end{align}
for every $k \in \N$. By the choice of $p_k$ this makes sure that 
 \begin{align*}
\E[Y_k^4] = 3^{1/m}\,,
\end{align*}
for every $k \in \N$ and, hence, letting $F = Y_1 \cdot \dotsc \cdot Y_m$, we have $F=J_m(f)$, where 
\begin{equation*}
 f(i_1,\dotsc,i_m):=\begin{cases}
                     \frac{1}{m!}\,,&\text{if } \{i_1,\dotsc,i_m\}=\{1,\dotsc,m\}\,,\\
                     0\,,&\text{otherwise,}
                    \end{cases}
\end{equation*}
$\Var(F)=1$ and 
\[\E[F^4]= \E[Y_1^4] \cdot \dotsc \cdot \E[Y_m^4] = 3\,.\]
However, as $F$ obviously only assumes finitely many values, it cannot be normally distributed. 
\end{example}

\begin{theorem}[Counterexample in the symmetric case]\label{symtheo}
Assume that $X=(X_j)_{j\in\N}$ is a symmetric Rademacher sequence. Then, for each $m\geq2$, there is a discrete multiple integral $F$ of order $m$ with respect to $X$ such that $\E[F^2]=1$, $\E[F^4]=3$ which is 
not normally distributed. In particular, the fourth moment theorem fails for chaos of order $m\geq2$.
\end{theorem}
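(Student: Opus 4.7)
The plan is to construct $F$ explicitly for $m = 2$ by interpolating between two independent second-chaos elements whose fourth moments lie on opposite sides of $3$, and then to lift this construction to arbitrary $m \geq 3$ by multiplying with a product of fresh Rademacher variables.

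For $m = 2$, let $A := X_1 X_2 = J_2(h_A)$ with $h_A(1,2) = h_A(2,1) = 1/2$, which satisfies $\E[A^2] = \E[A^4] = 1$. For some $n \geq 4$, put
\[
B := \frac{1}{\sqrt{2n(n-1)}}\bigl((X_3 + \cdots + X_{n+2})^2 - n\bigr)
= \frac{2}{\sqrt{2n(n-1)}} \sum_{3 \leq i < j \leq n+2} X_i X_j,
\]
which is $J_2(h_B)$ for the symmetric kernel $h_B(i,j) = 1/\sqrt{2n(n-1)}$ on $\{(i,j): 3 \leq i \neq j \leq n+2\}$ (and zero elsewhere). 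A direct moment calculation using the cumulants of the sum of $n$ independent symmetric signs shows $\E[B^2] = 1$ and yields the closed form
\[
\E[B^4] \;=\; \frac{15n^3 - 78n^2 + 131n - 68}{n(n-1)^2},
\]
which exceeds $3$ already at $n = 4$ (it equals $14/3$ there) and tends to $15$ as $n \to \infty$, in agreement with the fourth moment of the Gaussian-chaos limit $(N^2-1)/\sqrt{2}$. Set $F := \alpha A + \beta B = J_2(\alpha h_A + \beta h_B)$ with $\alpha^2 + \beta^2 = 1$. Since $h_A$ and $h_B$ have disjoint supports, $A$ and $B$ are independent; together with $\E[A] = \E[B] = 0$ and $\E[A^3] = 0$ (because $X_k^2 = 1$ gives $A^3 = A$), this yields $\E[F^2] = 1$ and
\[
\E[F^4] \;=\; \alpha^4 + 6 \alpha^2 \beta^2 + \beta^4 \E[B^4].
\]
Viewed as a continuous function of $t := \beta^2 \in [0,1]$, the right-hand side takes value $1$ at $t = 0$ and $\E[B^4] > 3$ at $t = 1$, so the intermediate value theorem supplies $t^\ast \in (0,1)$ for which $\E[F^4] = 3$ exactly. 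The resulting $F$ depends only on $X_1, \dotsc, X_{n+2}$ and therefore takes only finitely many values, so it cannot be standard normal.

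For $m \geq 3$, set $\tilde F := F \cdot X_{n+3} X_{n+4} \cdots X_{n+m}$, appending $m - 2$ fresh independent symmetric signs whose indices are disjoint from the support of the kernel of $F$. Multiplying out, $\tilde F$ is a linear combination of $m$-fold products of pairwise distinct $X_k$'s, so $\tilde F = J_m(\tilde h)$ for the symmetric kernel $\tilde h \in \ell_0^2(\N)^{\circ m}$ obtained by symmetrizing the tensor of $\alpha h_A + \beta h_B$ with the indicator of $\{n+3, \dotsc, n+m\}$. The extra factor $X_{n+3} \cdots X_{n+m}$ is independent of $F$ with zero mean and unit square, so $\E[\tilde F^2] = 1$ and $\E[\tilde F^4] = \E[F^4] = 3$, while $\tilde F$ still assumes only finitely many values and hence is not normally distributed.

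The main obstacle is exhibiting $B$ in the second chaos with $\E[B^4] > 3$: simple "orthogonal" candidates $\sum_j c_j X_{2j-1} X_{2j}$ (with $\sum_j c_j^2 = 1$) always satisfy $\E[\,\cdot\,^4] = 3 - 2\sum_j c_j^4 < 3$, so no such sum of disjoint pair-products can work. The overlapping pair-products encoded in the square $(X_3 + \cdots + X_{n+2})^2$ are essential here, and reflect the genuinely non-Gaussian character of the symmetric Rademacher second chaos (in contrast to the Gaussian case, where every second-chaos element satisfies $\E[F^4] \geq 3\E[F^2]^2$). The moment formula for $\E[B^4]$ is the only somewhat technical step; everything else is an elementary interpolation and a finite-support observation.
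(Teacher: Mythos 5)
Your proof is correct and follows essentially the same strategy as the paper's: reduce to $m=2$, exhibit two normalized second-chaos elements whose fourth moments lie on opposite sides of $3$, apply the intermediate value theorem along a continuous path inside the normalized second chaos, observe that the resulting variable takes only finitely many values, and lift to $m\geq 3$ by multiplying with fresh independent signs. The only (immaterial) differences are your choice of endpoints ($X_1X_2$ with fourth moment $1$, versus the paper's $X_1 S_n$ with fourth moment $3-\tfrac{2}{n-1}$) and your explicit one-parameter interpolation in place of the paper's appeal to connectedness of the whole sphere $\mathcal{S}_2(n)$.
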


\begin{proof}
First we introduce some notation which helps simplify the presentation of our computations:  
 For integers $1\leq m\leq n$ denote by 
\[\D_m(n):=\{J\subseteq[n]\,:\, \abs{J}=m\}\]
the collection of all $\binom{n}{m}$ $m$-subsets of $[n]:=\{1,\dotsc,n\}$.
We will consider random variables $F$ of the form 
\begin{equation}\label{defw}
 F=\sum_{J\in\D_m(n)}a_J \prod_{i\in J} X_i=\sum_{J\in\D_m(n)}a_J X_J\,, 
\end{equation}
where $a_J\in\R$, $J\in\D_m(n)$, and we write $X_J:=\prod_{i\in J} X_i$. Then, $F$ is a discrete multiple integral of order $m$ such that $\E[F]=0$ and, as in the statement, we assume that 
\[\sum_{J\in\D_m(n)} a_J^2=\E[F^2]=1\,.\]
From the simple fact that, for $I,J,K,L\in\D_m(n)$, we have 
\begin{equation*}
 \E\bigl[X_IX_JX_KX_L\bigr]=\begin{cases}
                             1\,,& I\Delta J= K\Delta L\,,\\
                             0\,,&\text{otherwise}\,,
                            \end{cases}
\end{equation*}
it immediately follows that 
\begin{align}\label{4thform}
 \E[F^4]&=\sum_{\substack{I,J,K,L\in\D_m(n):\\ I\Delta J= K\Delta L}}a_Ia_Ja_Ka_L\,.
\end{align}
It is the simple expression \eqref{4thform} of the fourth moment of $F$ which makes it beneficial for us to use the representation \eqref{defw} of $F$ as indexed by subsets.
Denote by
\[\mathcal{S}_m(n):=\Bigl\{(a_J)_{J\in\D_m(n)}\,:\,\sum_{J\in\D_m(n)} a_J^2=1\Bigr\}\subseteq\R^{\D_m(n)}\]
the sphere of dimension $\binom{n}{m}-1$.
Clearly, the function $g:=g_n:\mathcal{S}_m(n)\rightarrow\R$ given by
\begin{equation*}
 g\bigl(a_J,J\in\D_m(n)\bigr):=\sum_{\substack{I,J,K,L\in\D_m(n):\\ I\Delta J= K\Delta L}}a_Ia_Ja_Ka_L
\end{equation*}
is continuous. 
Let us first consider the case $m=2$ to which the general case will be reduced later on. 
If we can show that, for some $n\in\N$, there are $(b_J)_{J\in\D_2(n)}, (c_J)_{J\in\D_2(n)}\in\mathcal{S}_2(n)$ such that 
\begin{equation*}
 g(b_J,J\in\D_2(n))>3\quad\text{and}\quad g(c_J,J\in\D_2(n))<3\,,
\end{equation*}
then, by the connectedness of $\mathcal{S}_2(n)$, it follows from the intermediate value theorem that there is an $(a_J)_{J\in\D_2(n)}\in\mathcal{S}_2(n)$ such that 
\begin{equation*}
 g(a_J,J\in\D_2(n))=3\,.
\end{equation*}
Then, the variable $F$ defined by \eqref{defw} with $m=2$ and this special sequence $(a_J)_{J\in\D_2(n)}$ will have fourth moment equal to $3$ but it cannot be normally distributed as it assumes only finitely many values.
It thus remains to construct the sequences $(b_J)_{J\in\D_2(n)}, (c_J)_{J\in\D_2(n)}\in\mathcal{S}_2(n)$.
For $n\in\N$, choose $(b_J)_{J\in\D_2(n)}$ such that 
\[b_J:=\frac{1}{\sqrt{\binom{n}{2}}}\,,\quad J\in\D_2(n)\,.\]
In this case we have 
\begin{equation*}
 g_n\bigl((b_J)_{J\in\D_2(n)}\bigr)=\frac{1}{\binom{n}{2}^2}\babs{\{(I,J,K,L)\in\D_2(n)^4\,:\,I\Delta J= K\Delta L\}}\,.
\end{equation*}
By distinguishing the cases $\abs{I\Delta J}=\abs{K\Delta L}=0$, $\abs{I\Delta J}=\abs{K\Delta L}=2$ and $\abs{I\Delta J}=\abs{L\Delta K}=4$ it is not too hard to see that 
\begin{align*}
&\babs{\{(I,J,K,L)\in\D_2(n)^4\,:\,I\Delta J= K\Delta L\}}\\
&=\binom{n}{2}^2+ \binom{n}{2}\cdot (n-2)\cdot 2\cdot (n-2)\cdot2 +\binom{n}{2}\cdot\binom{n-2}{2}\cdot\binom{4}{2} \\
&=\binom{n}{2}^2+4(n-2)^2\binom{n}{2}+ 6 \binom{n}{2}\cdot\binom{n-2}{2} \,.
\end{align*}
Hence, using simple monotonicity arguments, we have 
\begin{align*}
 g_n\bigl((b_J)_{J\in\D_2(n)}\bigr)&=1+\frac{8(n-2)^2}{n(n-1)}+\frac{6(n-2)(n-3)}{n(n-1)}\\
 &\geq1+\frac{8}{3}+1>3
\end{align*}
for all $n\geq4$.
On the other hand, for $n \geq 2$, let $(c_J)_{J\in\D_2(n)}\in\mathcal{S}_2(n)$ be given by 
\begin{equation*}
 c_J:=  \frac{1}{\sqrt{n-1}} 1_J(1)\,,\quad J\in\D_2(n)\,,
 \end{equation*}
such that 
\begin{equation*}
 H:=\sum_{J\in\D_2(n)}c_J X_J= X_1\frac{1}{\sqrt{n-1}}\sum_{k=2}^n X_k=: X_1 S_{n}\,.
\end{equation*}
Then, we have 
\[\E[H^{2r}]=\E[S_n^{2r}]\quad\text{and}\quad \E[H^{2r+1}]=0\]
for all $r\in\N$. In particular, from the computation in the proof of Corollary \ref{mtcor2} with $\lambda=1$ we have
\begin{align*}
 g\bigl((c_J)_{J\in\D_2(n)}\bigr)&=\E[H^4]=\E[S_n^4]=3-2\sum_{k=2}^{n}\frac{1}{(n-1)^2}=3-\frac{2}{n-1}<3
\end{align*}
for all $n\geq2$. By the intermediate value theorem, for $n\geq4$, there hence also exists $(a_J)_{J\in\D_2(n)}\in\mathcal{S}_2(n)$ such that 
\begin{equation*}
 F:=\sum_{J\in\D_2(n)}a_J X_J
\end{equation*}
satisfies 
\[\E[F^4]=3\,,\]
but $F$ cannot be normally distributed. If $m>2$, then letting 
\[G:=X_{n+1}\cdot\ldots\cdot X_{n+m-2}F\]
we have 
\[\E[G^4]=\E[F^4]=3\,.\]
Hence, we have disproved the fourth moment theorem for symmetric Rademacher chaos of every order $m\geq2$.
\end{proof}

\begin{remark}\label{rem4th}
 \begin{enumerate}[(a)]
  \item Theorem \ref{symtheo} and Corollary \ref{mtcor2} give a complete answer about fourth moment theorems in the case of symmetric Rademacher sequences. 
  In particular, Theorem \ref{symtheo} disproves the statement (c)$\Rightarrow$(a) of Proposition 4.6 in \cite{NPR-ejp} dealing with the case $m=2$. 
  \item Example \ref{counter} demonstrates that, also in the non-symmetric case, the fourth moment theorem does not hold in general.  
    \item In the paper \cite{NPPR16} the authors give general conditions for fourth moment theorems of homogeneous multilinear forms in centered i.i.d. random variables $(Y_j)_{j\in\N}$. One of their results (see \cite[Theorem 2.3]{NPPR16}) is that whenever $\E[Y_1^3]=0$ and $\E[Y_1^4]\geq3$, then the fourth moment theorem holds true. By \eqref{y4th} and since $\E[Y_1^3]=\frac{q_1-p_1}{\sqrt{p_1q_1}}$ these two moments conditions are mutually exclusive for homogeneous Rademacher sequences. Hence, the results from \cite{NPPR16} are rather complementary to ours. 
   \end{enumerate}

\end{remark}


\section*{Acknowledgements}
We are grateful to an anonymous referee whose valuable comments and suggestions helped us improve both our results and their presentation. 
We would also like to thank Guangqu Zheng for pointing out a gap in an argument which has now been filled for the final version.

\section{Elements of discrete Malliavin calculus for Rademacher functionals}\label{framework}
In this section we introduce some notation and review several facts about discrete stochastic analysis for Rademacher functionals. Our main reference on this topic is the survey article \cite{Priv08}. However, we 
also refer to the papers \cite{NPR-ejp,KRT1,KRT2} for proofs of certain results. In general, known properties and results are just stated without precisely pointing to a proof.

\subsection{Basic setup and notation}
Recall the definition of an asymmetric, inhomogeneous Rademacher sequence given in Subsection \ref{mainresults}.
Since we are only interested in distributional properties of functionals of the sequence $X$, we may w.l.o.g. assume from the outset that we are working on a canonical space, i.e.\ that 
\begin{equation*}
\Omega=\{-1,+1\}^\N\,,\quad \F=\Pot\bigl(\{-1,+1\}\bigr)^{\otimes\N}\quad\text{and}\quad 
\P=\bigotimes_{k=1}^\infty \Bigl(q_k\delta_{-1}+p_k\delta_{+1}\Bigr)\,,
\end{equation*}
where we denote by $\delta_{\pm1}$ the Dirac measure in $\pm1$. Then, for $k\in\N$, we let $X_k$ be the $k$-th canonical 
projection on $\Omega$, i.e.\ $X_k((\omega_n)_{n\in\N})=\omega_k$.
Recall also the definition \eqref{defy} of the normalized sequence $Y=(Y_k)_{k\in\N}$ corresponding to $X$.
The random variables $Y_k$, $k\in\N$, satisfy the following elementary but important identity
\begin{equation}\label{strucid}
Y_k^2=1+\frac{q_k-p_k}{\sqrt{p_kq_k}}Y_k
\end{equation}
which follows from $X_k^2\equiv1$.
 For $\omega=(\omega_n)_{n\in\N}\in\Omega$ and 
$k\in\N$ we define the sequences 
\[\omega_k^+:=(\omega_1,\dotsc,\omega_{k-1},+1,\omega_{k+1},\dotsc)
\quad\text{and}\quad\omega_k^-:=(\omega_1,\dotsc,\omega_{k-1},-1,\omega_{k+1},\dotsc)\]
and for a functional $F:\Omega\rightarrow\R$ and $k\in\N$ we define $F_k^{\pm}:\Omega\rightarrow\R$ via
\[F_k^+(\omega):=F(\omega_k^+)\quad\text{and}\quad F_k^-(\omega):=F(\omega_k^-)\,.\]
Furthermore, for $F:\Omega\rightarrow\R$ and $k\in\N$ we define 
\[D_kF:=\sqrt{p_k q_k}\bigl(F_k^+-F_k^-\bigr)\quad\text{as well as}\quad DF:=(D_kF)_{k\in\N}:\Omega\times\N\rightarrow\R^{\N}\,.\]
From \cite[Proposition 7.8]{Priv08} we quote the following product rule for the operator $D$:
For all $F,G:\Omega\rightarrow\R$ and $k\in\N$ we have
\begin{align}
D_k(FG)&=FD_kG+GD_kF-\frac{X_k}{\sqrt{p_kq_k}}D_kF D_kG\notag\\
&=FD_kG+GD_kF-2Y_kD_kF D_kG+\frac{q_k-p_k}{\sqrt{p_kq_k}}D_kF D_kG\label{prodD}\,.
\end{align}
Finally, again for $F:\Omega\rightarrow\R$ and $k\in\N$, we introduce the operators 
$D^+F=(D^+_kF)_{k\in\N}$ and $D^+F=(D^+_kF)_{k\in\N}$ via
\begin{equation*}
D^+_kF:=F_k^+-F\quad\text{and}\quad D^-_kF:=F_k^--F\,,\quad k\in\N\,.
\end{equation*}
Note that with this definition we have 
\begin{equation*}
D_kF=\sqrt{p_kq_k}\bigl(D_k^+F-D_k^-F\bigr)\,,\quad k\in\N\,.
\end{equation*}

\subsection{$L^2$-theory and Malliavin operators}
By $\kappa$ we denote from now on the counting measure on $(\N,\Pot(\N))$ and, for $n\in\N$, we write $\kappa^{\otimes n}$ for its $n$-fold product on $(\N^n,\Pot(\N^n))$. Furthermore, we recall the space 
$\ell^2(\N^n)=L^2(\kappa^{\otimes n})$ which consists of all functions $f:\N^n\rightarrow\R$ such that 
\[\sum_{(i_1,\dotsc,i_n)\in\N^n} f^2(i_1,\dotsc,i_n)=\int_{\N^n}f^2 d\kappa^{\otimes n}<\infty\,.\]
By $\ell^2(\N)^{\circ n}$ we denote the subspace of $\ell^2(\N^n)$ consisting of those $f\in\ell^2(\N^n)$ which are symmetric in the sense that $f(i_{\pi(1)},\dotsc,i_{\pi(n)})=f(i_1,\dotsc,i_n)$ for all $(i_1,\dotsc,i_n)\in\N^n$ and all 
permutations $\pi$ of the set $[n]=\{1,\dotsc,n\}$. We write 
\[\Delta_n:=\{(i_1,\dotsc,i_n)\in\N^n\,:\, i_k\not=i_l\text{ for all } k\not=l\}\]
and denote by $\ell_0^2(\N^n)$ the class of all $f\in\ell^2(\N^n)$ such that $f(i_1,\dotsc,i_n)=0$ whenever 
$(i_1,\dotsc,i_n)\in\Delta_n^c:=\N^n\setminus \Delta_n$. Finally, we introduce 
$\ell_0^2(\N)^{\circ n}:= \ell_0^2(\N^n)\cap \ell^2(\N)^{\circ n}$ and call its elements \textit{kernels} in what follows.
If $f:\N^n\rightarrow\R$ is a function, then we denote by $\tilde{f}$ its \textit{canonical symmetrization}, defined via 
\[\tilde{f}(i_1,\dotsc,i_n):=\frac{1}{n!}\sum_{\pi\in\mathbb{S}_n}f(i_{\pi(1)},\dotsc,i_{\pi(n)})\,,\]
where $\mathbb{S}_n$ denotes the group of all permutations of the set $[n]$. Furthermore, for $n\in\N$ and a kernel $f\in\ell_0^2(\N)^{\circ n}$ recall the definition \eqref{intdef} of the \textit{discrete multiple integral} of order $n$ of $f$.
The linear subspace of $L^2(\P)$ consisting of all random variables 
$J_n(f)$, $f\in\ell_0^2(\N)^{\circ n}$, is called the \textit{Walsh chaos} or \textit{Rademacher chaos} of order $n$ and will be denoted by $C_n$ in what follows. An important property of discrete multiple integrals is that they satisfy the isometry relation 
\begin{equation}\label{iso}
\E\bigl[J_m(f)J_n(g)\bigr]=\delta_{n,m} m!\langle f,g\rangle_{\ell^2(\N^m)}\,,
\end{equation}
where $\delta_{n,m}$ denotes \textit{Kronecker's delta symbol}. The fundamental importance of discrete multiple integrals 
is due to the following \textit{chaos decomposition property}: For every $F\in L^2(\P)$ there exists a unique 
sequence of kernels $f_n\in\ell_0^2(\N)^{\circ n}$, $n\in\N_0$, such that $f_0=\E[F]$ and 
\begin{equation}\label{chaosdec}
F=\E[F]+\sum_{n=1}^\infty J_n(f_n)=\sum_{n=0}^\infty J_n(f_n)\,,
\end{equation}
where the series converges in $L^2(\P)$. Note that this, in particular, implies that one has the Hilbert space orthogonal 
decomposition
\begin{equation*}
L^2(\P)=\bigoplus_{n=0}^\infty C_n\,.
\end{equation*}
Denoting by $\proj{\cdot}{n}:L^2(\P)\rightarrow C_n$ the orthogonal projection on $C_n$, by \eqref{chaosdec} we thus have 
\begin{equation}\label{defproj}
\proj{F}{n}=J_n(f_n)\,,\quad n\in\N_0\,,
\end{equation}
whenever $F$ has the chaos decomposition \eqref{chaosdec}.
We denote by $\calS$ the linear subspace of those $F\in L^2(\P)$ whose chaotic decomposition \eqref{chaosdec} is finite, i.e.\ there is an $m\in\N$ (depending on $F$) such that $f_n\equiv0$ for all $n>m$. From \eqref{iso} and the
chaotic decomposition property it is immediate that $\calS$ is dense in $L^2(\P)$.

Let $f\in\ell^2(\N)^{\circ n}$. For $n\in\N$ we define the sub-$\sigma$-field $\F_n:=\sigma(X_1,\dotsc,X_n)=\sigma(Y_1,\dotsc,Y_n)$ of $\F$, and we further let
\begin{equation}\label{martdef}
J_m^{(n)}(f):=\sum_{(i_1,\dotsc,i_m)\in[n]^m}f(i_1,\dotsc,i_m)Y_{i_1}\cdot\dotsc\cdot Y_{i_m}=J_m(f^{(n)})\,,
\end{equation}
where $f^{(n)}(i_1,\dotsc,i_m):=(f\cdot \mathds{1}_{[n]^m})(i_1,\dotsc,i_m)$. Then, it readily follows that $(J_m^{(n)}(f))_{n\in\N}$ is a square-integrable martingale with respect to the filtration $(\F_n)_{n\in\N}$. Moreover, it holds that 
\begin{equation}\label{levy}
 J_m^{(n)}(f)=\E\bigl[J_m(f)\,\bigl|\,\F_n\bigr]\,,\quad n\in\N\,.
\end{equation}
\begin{lemma}\label{l4conv}
The martingale $(J_m^{(n)}(f))_{n\in\N}$ converges $\Prob$-a.s.\ and in $L^4(\Prob)$ to $J_m(f)$. 
In particular, we have
\begin{equation*}
 \lim_{n\to\infty}\E\bigl[J_m^{(n)}(f)^4\bigr]=\E\bigl[J_m(f)^4\bigr]\,.
\end{equation*}
\end{lemma}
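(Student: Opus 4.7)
The plan has two main parts: first establish $\P$-a.s.\ and $L^2(\P)$-convergence using the Doob-martingale structure already built into \eqref{levy}, then upgrade to $L^4(\P)$-convergence by showing $J_m(f)\in L^4(\P)$ and invoking the $L^p$-martingale convergence theorem.

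First, by \eqref{levy}, $(J_m^{(n)}(f))_{n\in\N}$ is the Doob martingale closed by $J_m(f)\in L^2(\P)$ with respect to the filtration $(\F_n)_{n\in\N}$. Doob's $L^2$-martingale convergence theorem thus yields convergence $\P$-a.s.\ and in $L^2(\P)$ to $\E[J_m(f)\,|\,\F_\infty]=J_m(f)$; the limit equals $J_m(f)$ since $J_m(f)$ is $\F_\infty$-measurable (on the canonical space, $\F_\infty=\F$).

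Second, to obtain $L^4(\P)$-convergence, it suffices to establish $J_m(f)\in L^4(\P)$: once this is known, the $L^p$-martingale convergence theorem with $p=4$ applied to the closed martingale $J_m^{(n)}(f)=\E[J_m(f)\,|\,\F_n]$ directly delivers the required $L^4$-convergence. To show $J_m(f)\in L^4(\P)$, I would expand $\E[J_m^{(n)}(f)^4]$ in the subset representation $J_m^{(n)}(f)=\sum_{I\in\D_m(n)}\alpha_I Y_I$ with $\alpha_I:=m!\,f(I)$ and $Y_I:=\prod_{i\in I}Y_i$, obtaining
\begin{equation*}
\E[J_m^{(n)}(f)^4]=\sum_{(I_1,I_2,I_3,I_4)\in\D_m(n)^4}\alpha_{I_1}\alpha_{I_2}\alpha_{I_3}\alpha_{I_4}\prod_{k\in\N}\E[Y_k^{n_k}]\,,
\end{equation*}
where $n_k:=\sum_{j=1}^4\mathds{1}_{k\in I_j}$. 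Using \eqref{strucid} one computes $\E[Y_k]=0$, $\E[Y_k^2]=1$, $\E[Y_k^3]=c_k$, $\E[Y_k^4]=1+c_k^2$ with $c_k:=(q_k-p_k)/\sqrt{p_kq_k}$, so only multiplicity patterns without any singleton contribute. Grouping the surviving quadruples by their multiplicity pattern and bounding each partial sum via Cauchy--Schwarz in terms of $\|f\|_{\ell^2(\N^m)}$ and a finite constant depending only on $m$ and $(p_k)_{k\in\N}$ gives $\sup_{n\in\N}\E[J_m^{(n)}(f)^4]<\infty$. Combined with the $\P$-a.s.\ convergence from the first step, Fatou's lemma then yields $\E[J_m(f)^4]\le\liminf_n\E[J_m^{(n)}(f)^4]<\infty$.

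The concluding identity $\lim_{n\to\infty}\E[J_m^{(n)}(f)^4]=\E[J_m(f)^4]$ follows immediately from the $L^4$-convergence via $\bigl|\|J_m^{(n)}(f)\|_4-\|J_m(f)\|_4\bigr|\le\|J_m^{(n)}(f)-J_m(f)\|_4\to 0$. The main obstacle is the uniform-in-$n$ $L^4$-bound in the previous step: because the coefficients $c_k$ can blow up when $p_k$ approaches $0$ or $1$, controlling the contributions from multiplicity patterns involving $n_k\in\{3,4\}$ requires a careful combinatorial argument exploiting the $\ell^2$-summability of $f$; in the symmetric case all such $c_k$ vanish and the bound reduces to the classical one for Rademacher chaos.
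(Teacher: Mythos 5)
Your first step (recognizing the Doob martingale structure from \eqref{levy} and deducing $\P$-a.s.\ and $L^2(\P)$ convergence) matches the paper. The second step, however, contains a genuine gap: you propose to \emph{prove} that $J_m(f)\in L^4(\P)$ by establishing the uniform bound $\sup_{n\in\N}\E[J_m^{(n)}(f)^4]<\infty$ through a direct combinatorial expansion. For a general inhomogeneous Rademacher sequence this bound is simply false, so no amount of care in the combinatorics can rescue it. Already for $m=1$ one has
\[
\E\bigl[J_1^{(n)}(f)^4\bigr]=\sum_{k\le n}f(k)^4\,\E[Y_k^4]+3\sum_{\substack{j,k\le n:\\ j\ne k}}f(j)^2f(k)^2\,,
\]
and since $\E[Y_k^4]=1+c_k^2$ with $c_k=(q_k-p_k)/\sqrt{p_kq_k}$ unbounded as $p_k\to0$, choosing $f(k)^2=k^{-2}$ and $p_k=k^{-3}$ (so that $c_k^2\sim k^{3}$) gives $f\in\ell^2(\N)$ but $\sum_k f(k)^4\E[Y_k^4]=\infty$; the partial fourth moments diverge and $J_1(f)\notin L^4(\P)$. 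The difficulty you flag at the end of your write-up (the blow-up of $c_k$) is therefore not a technical obstacle to be overcome but an actual obstruction: $\ell^2$-summability of $f$ does not imply $J_m(f)\in L^4(\P)$, and your Fatou step runs in the wrong direction — you would be deducing $\E[J_m(f)^4]<\infty$ from a uniform bound that cannot be obtained without assuming it.

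The lemma has to be read with the standing hypothesis $J_m(f)\in L^4(\P)$, which is how it is invoked throughout the paper (in Lemma \ref{remlemma} and in Subsection \ref{altproof} one always has $F\in L^4(\P)$ by assumption). Once that hypothesis is in place, the uniform bound you are after is immediate and requires no moment computation at all: by \eqref{levy} and the conditional Jensen inequality,
\[
\E\babs{J_m^{(n)}(f)}^4=\E\babs{\E\bigl[J_m(f)\,\bigl|\,\F_n\bigr]}^4\le\E\Bigl[\E\bigl[\babs{J_m(f)}^4\,\bigl|\,\F_n\bigr]\Bigr]=\E\babs{J_m(f)}^4<\infty\,,
\]
after which the $L^4$-martingale convergence theorem finishes the proof; the final identity then follows from $L^4$-convergence exactly as you state. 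This one-line Jensen argument is the paper's proof, and it is the missing idea that replaces your entire second step.
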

\begin{proof}
From \eqref{levy} and martingale theory we obtain that $(J_m^{(n)}(f))_{n\in\N}$ converges almost surely and in $L^1(\P)$ to $J_m(f)$. Furthermore, from \eqref{levy} and the conditional version of Jensen's inequality we conclude that 
\begin{equation*}
 \E\babs{J_m^{(n)}(f)}^4=\E\babs{\E\bigl[J_m(f)\,\bigl|\,\F_n\bigr]}^4\leq\E\Bigl[\E\bigl[\babs{J_m(f)}^4\,\bigl|\,\F_n\bigr]\Bigr]=\E\babs{J_m(f)}^4
\end{equation*}
for each $n\in\N$. Hence, we obtain that 
\begin{equation}\label{l4bounded}
 \sup_{n\in\N}\E\babs{J_m^{(n)}(f)}^4<+\infty
\end{equation}
and the $L^4$-martingale convergence theorem implies that the martingale $(J_m^{(n)}(f))_{n\in\N}$ converges to $J_m(f)$ also in $L^4(\Prob)$.
This proves the lemma.
\end{proof}

In \cite[Proposition 2.1]{KRT2}, the following Stroock type formula for the kernels $f_n$, $n\in\N$, from \eqref{chaosdec} has been given:
\begin{equation}\label{stroock}
 f_n(i_1,\dotsc,i_n)=\frac{1}{n!}\E\bigl[D^n_{i_1,\dotsc,i_n} F\bigr]=\frac{1}{n!}\E\bigl[F\cdot Y_{i_1}\cdot\ldots\cdot Y_{i_n}\bigr]\,,
\end{equation}
where the iterated difference operators $D^n$, $n\in\N_0$, are defined iteratively via $D^0F=F$ and $D^n_{i_1,\dotsc,i_n} F:=D_{i_n}(D^{n-1}_{i_1,\dotsc,i_{n-1}} F)$ for $n\geq1$ and $(i_1,\dotsc,i_n)\in\Delta_n$. Here, $F:\Omega\rightarrow\R$ 
is an arbitrary functional.

By definition, the domain $\dom(D)$ of the \textit{Malliavin derivative operator} is the collection of all $F\in L^2(\P)$ such that the kernels appearing in the chaotic decomposition \eqref{chaosdec} satisfy
\begin{equation*}
 \sum_{n=1}^\infty n n! \norm{f_n}_{\ell^2(\N^n)}^2<\infty\,.
\end{equation*}
It is an important fact that, for $F\in\dom(D)$ with chaotic decomposition \eqref{chaosdec}, we have 
\begin{equation*}
 D_kF=\sum_{n=1}^\infty n J_{n-1}\bigl(f_n(k,\cdot)\bigr)\,,\quad k\in\N\,.
\end{equation*}
Whether $F$ is in $\dom(D)$ or not can also be checked without knowing its chaos decomposition. Indeed, according to Lemma 2.3 from \cite{KRT1} $F\in\dom(D)$ if and only if 
\begin{equation}\label{domdchar}
 \sum_{k=1}^\infty\E\bigl[(D_kF)^2\bigr]=\sum_{k=1}^\infty p_kq_k\E\bigl[(F_k^+-F_k^-)^2\bigr]<\infty\,.
\end{equation}
Note that Lemma 2.3 in \cite{KRT1} actually only deals with the symmetric case $p_k=q_k=1/2$ for all $k\in\N$, but the same proof also works in the general case in view of the general Stroock type formula \eqref{stroock} 
which is fundamental for the proof given in \cite{KRT1}. The next result will be very important in order to apply Stein's method in our framework.

\begin{lemma}\label{Lipdom}
 Suppose that $F\in\dom(D)$ and that $\psi:\R\rightarrow\R$ is Lipschitz-continuous. Then, also $\psi(F)\in\dom(D)$.
\end{lemma}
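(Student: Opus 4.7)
The plan is to invoke the characterization \eqref{domdchar}, according to which $G\in\dom(D)$ if and only if $G\in L^2(\P)$ and $\sum_{k=1}^\infty p_kq_k\,\E[(G_k^+-G_k^-)^2]<\infty$. Thus there are really two things to verify for $G:=\psi(F)$: that $\psi(F)\in L^2(\P)$, and that the above series converges.

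For the first point, since $\psi$ is Lipschitz with some constant $L\geq0$, we have the pointwise bound $|\psi(x)|\leq|\psi(0)|+L|x|$ for every $x\in\R$, so $|\psi(F)|\leq|\psi(0)|+L|F|$. Because $F\in\dom(D)\subseteq L^2(\P)$, this immediately yields $\psi(F)\in L^2(\P)$.

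For the second point, I would first note that the operation $\omega\mapsto\omega_k^{\pm}$ is defined at the level of the sample space, and therefore the flip operators commute with any deterministic transformation: explicitly, $(\psi(F))_k^{\pm}(\omega)=\psi(F(\omega_k^{\pm}))=\psi(F_k^{\pm}(\omega))$. Combined with the Lipschitz inequality $|\psi(a)-\psi(b)|\leq L|a-b|$, this gives
\begin{equation*}
\babs{(\psi(F))_k^+-(\psi(F))_k^-}=\babs{\psi(F_k^+)-\psi(F_k^-)}\leq L\babs{F_k^+-F_k^-}
\end{equation*}
pointwise on $\Omega$. Squaring, taking expectations, multiplying by $p_kq_k$ and summing over $k$ yields
\begin{equation*}
\sum_{k=1}^\infty p_kq_k\,\E\bigl[((\psi(F))_k^+-(\psi(F))_k^-)^2\bigr]\leq L^2\sum_{k=1}^\infty p_kq_k\,\E\bigl[(F_k^+-F_k^-)^2\bigr]<\infty\,,
\end{equation*}
where the finiteness of the right-hand side is precisely the characterization \eqref{domdchar} applied to $F\in\dom(D)$. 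Appealing once more to \eqref{domdchar} in the reverse direction, this proves $\psi(F)\in\dom(D)$.

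There is no serious obstacle here; the argument is essentially a one-line consequence of the characterization \eqref{domdchar} together with the fact that the difference operators $D_k^{\pm}$ are defined pathwise via coordinate flips, and hence automatically commute with composition by deterministic functions. The only mildly delicate point is to make sure to invoke \eqref{domdchar} (rather than the chaos-expansion definition of $\dom(D)$), since the chaos coefficients of $\psi(F)$ are not easily controlled directly.
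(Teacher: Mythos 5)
Your proof is correct and follows essentially the same route as the paper's: establish $\psi(F)\in L^2(\P)$ via $|\psi(F)|\leq|\psi(0)|+K|F|$, then use the pathwise identity $(\psi(F))_k^\pm=\psi(F_k^\pm)$ together with the Lipschitz bound to dominate $\sum_k\E[(D_k\psi(F))^2]$ by $K^2\sum_k\E[(D_kF)^2]$ and conclude via the characterization \eqref{domdchar}. No gaps.
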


\begin{proof}
 Let $K\in(0,\infty)$ be a Lipschitz constant for $\psi$. Then, 
 \[\babs{\psi(F)}\leq \babs{\psi(0)}+\babs{\psi(F)-\psi(0)}\leq \babs{\psi(0)}+K\abs{ F}\,.\]
Hence, $\psi(F)\in L^2(\P)$. In order to make sure that $\psi(F)\in\dom(D)$, we are going to verify \eqref{domdchar}. Note that, for $k\in\N$, 
\begin{equation*}
 \babs{D_k\psi(F)}=\sqrt{p_k q_k}\babs{\psi(F_k^+)-\psi(F_k^-)}\leq \sqrt{p_k q_k} K\babs{F_k^+-F_k^-}=K \babs{D_k F}\,.
\end{equation*}
Hence, 
\begin{align*}
  \sum_{k=1}^\infty\E\bigl[\bigl(D_k\psi(F)\bigr)^2\bigr]&\leq K^2  \sum_{k=1}^\infty\E\bigl[(D_kF)^2\bigr]<\infty\,,
\end{align*}
as $F\in\dom(D)$ satisfies \eqref{domdchar}. This proves the lemma.
\end{proof}

The \textit{Ornstein-Uhlenbeck operator} $L$ on $L^2(\P)$ associated with the sequence $X$ is defined by 
\begin{equation}\label{defL}
 LF:=-\sum_{n=1}^\infty n J_{n}(f_n)\,,
\end{equation}
where $F\in L^2(\P)$ is given by \eqref{chaosdec}. Its domain $\dom(L)$ consists precisely of those $F\in L^2(\P)$ whose kernels $f_n$, $n\in\N$, given by \eqref{chaosdec} satisfy
\begin{equation*}
 \sum_{n=1}^\infty n^2 n!\norm{f_n}_{\ell^2(\N^n)}^2<\infty\,.
\end{equation*}
In particular, one has $\calS\subseteq\dom(L)\subseteq\dom(D)$ implying that $L$ is densely defined. Moreover, it is known that $L$ is the infinitesimal generator of a Markovian semigroup, the \textit{Ornstein-Uhlenbeck semigroup} $(P_t)_{t\geq0}$ on 
$L^2(\P)$ defined for $F$ given by \eqref{chaosdec} via 
\[P_tF=\sum_{n=0}^\infty e^{-tn}J_n(f_n)\,.\]
Hence, $-L$ is a closed, positive and self-adjoint operator on $L^2(\P)$. Its spectrum is purely discrete and given by the non-negative integers. Furthermore, from \eqref{defL} 
it follows immediately that $F\in\dom(L)$ is an eigenfunction of $-L$ corresponding to the eigenvalue $n\in\N_0$ if and only if $F\in C_n$. Hence, the projectors given by \eqref{defproj} precisely project on the respective 
eigenspaces of $-L$ and we have $C_n=\ker(L+n\Id)$, $n\in\N_0$, where $\Id$ denotes the identity operator on $L^2(\P)$.

In \cite{Priv08}, the following pathwise representations of the Ornstein-Uhlenbeck operator $L$ are given: 
Whenever $F\in\calS$, we have 
\begin{align}
LF&=-\sum_{k=1}^\infty Y_k D_k F=-\frac12\sum_{k=1}^\infty\bigl(X_k-p_k+q_k\bigr)\bigl(F_k^+-F_k^-\bigr) \label{prepl1}\\
&=\sum_{k=1}^\infty \Bigl(q_k\bigl(F_k^--F\bigr)+p_k\bigl(F_k^+-F\bigr)\Bigr)\notag\\
&=\sum_{k=1}^\infty \Bigl(q_kD_k^-F+p_kD_k^+F\Bigr)\,.\label{prepl2}
\end{align}

In order to provide bounds on the Kolmogorov distance, we also introduce the \textit{divergence} or \textit{Skorohod integral operator} $\delta$ on $L^2(\P\otimes\kappa)$, which is formally defined as the adjoint of $D$, i.e.\ via 
the \textit{integration by parts formula}
\begin{equation}\label{intpartsdelta}
 \E\bigl[F\delta(u)\bigr]=\E\bigl[\langle DF,u\rangle_{\ell^2(\N)}\bigr]=\sum_{k=1}^\infty \E\bigl[(D_k F) u_k\bigr]\,, 
\end{equation}
where $F\in\dom(D)$ and $u=(u_k)_{k\in\N}\in\dom(\delta)$. Note that, for each $k\in\N$, $u_k\in L^2(\P)$ and so there are functions $g_{n+1}:\N^{n+1}\rightarrow\R$, $n\in\N_0$, such that 
$g_{n+1}(k, \cdot)\in\ell_0^2(\N)^{\circ n}$ for each $k\in\N$ and 
\begin{equation*}
 u_k=\sum_{n=0}^\infty J_n\bigl(g_{n+1}(k,\cdot)\bigr)\,,\quad k\in\N\,.
\end{equation*}
Then, it is known that $u\in\dom(\delta)$ if and only if 
\begin{equation*}
 \sum_{n=0}^\infty (n+1)!\norm{\widetilde{g_{n+1}}\mathds{1}_{\Delta_{n+1}}}^2_{\ell^2(\N^{n+1})}<\infty
\end{equation*}
and in this case one has
\begin{equation*}
 \delta(u)=\sum_{n=0}^\infty J_{n+1}\bigl(\widetilde{g_{n+1}}\mathds{1}_{\Delta_{n+1}}\bigr)\,.
\end{equation*}
The three Malliavin operators $D,\delta$ and $L$ are linked in the following way: For $F\in L^2(\P)$ we have $F\in\dom(L)$ if and only if, $F\in\dom(D)$, $DF\in\dom(\delta)$ and, in this case 
\begin{equation}\label{oplink}
 LF=-\delta DF\,.
\end{equation}
In addition, for every $u=(u_k)_{k\in\N}\in\dom(\delta)$, we have the following \textit{Skorohod isometry formula}
\begin{align}\label{Skorohod isometry}
\E[(\delta(u))^2] = \E[\| u \|_{\ell^2(\N)}^2] + \E \Big[ \sum_{k, \ell=1 \atop k \neq \ell}^\infty (D_k u_\ell)(D_\ell u_k) - \sum_{k=1}^\infty (D_ku_k)^2 \Big]. 
\end{align}
Note here that the corresponding Skorohod isometry formula in \cite[Equation (9.5)]{Priv08} contains an error and that the statement \eqref{Skorohod isometry} is a corrected version of it. This has been communicated to us by the author of \cite{Priv08} himself.

As is customary in the theory of infinitesimal generators of Markov semigroups (see \cite{BGL14} for a comprehensive treatment) we define the \textit{carr\'{e} du champ operator} $\Gamma$ associated to $L$ via 
\begin{equation}\label{cdcdef}
 \Gamma(F,G):=\frac12\bigl(L(FG)-FLG-GLF\bigr)\,,
\end{equation}
whenever $F,G\in\dom(L)$ are such that also $FG\in\dom(L)$. 
As $L(FG)$ is centered, and by the self-adjointness of $L$, for such $F,G$, we have the \textit{integration by parts formula} 
\begin{equation}\label{intpartsgamma}
 \E\bigl[\Gamma(F,G)\bigr]=-\E\bigl[F LG\bigr]\,.
\end{equation}

\begin{remark}\label{gammarem}
 In the situation where $L$ is a Markov diffusion generator, one can typically identify a dense algebra $\A\subseteq\dom(L)$ such that $L(\A)\subseteq\A$ and such that $\A$ is closed under sufficiently smooth transformations. Then, one
 usually considers the action of $\Gamma$ on $\A\times\A$ (again, see \cite{BGL14}). Furthermore, in this situation, $\Gamma$ is a derivation in the sense that 
\begin{equation}\label{diffusive}
 \Gamma(\psi(F),G)=\psi'(F)\Gamma(F,G)
\end{equation}
for $\psi$ smooth enough and $F,G\in\A$. Here, however, we are dealing with the non-diffusive Ornstein-Uhlenbeck operator $L$ corresponding to the discrete Rademacher sequence $X$ and, in order to keep track of 
$\Gamma(\psi(F),G)$ for $F,G\in\A:=\calS$ and $\psi$ a continuously differentiable function, we will need a pathwise representation for $\Gamma$ which indeed helps us measure how far $L$ is from being diffusive
in such a way that we can quantify and control the difference between both sides of \eqref{diffusive}. Furthermore, it is not in general true that $\psi(F)\in \calS$ if $F\in\calS$ and $\psi$ is $C^1$. 
This is why we first define an operator $\Gamma_0$ in a pathwise way (see \eqref{gamma0a}), prove a suitable partial integration formula (see Proposition \ref{intparts}) and then show that 
$\Gamma$ and $\Gamma_0$ coincide on $\calS\times\calS$ (see Proposition \ref{gammaprop}). 
\end{remark}

The \textit{pseudo-inverse} $L^{-1}$ of $L$ is defined on the subspace $1^\perp$ of mean zero random variables in $L^2(\P)$ via 
\begin{equation*}
 L^{-1}F:=-\sum_{n=1}^\infty\frac{1}{n} J_n(f_n)\,,
\end{equation*}
where $F$ has chaotic expansion $\sum_{n=1}^\infty J_n(f_n)$. Note that $L^{-1}F\in\dom(L)\subseteq\dom(D)$ for all $F\in 1^\perp$ and that we have 
\begin{align*}
 LL^{-1}F&=F\quad\text{for all } F\in 1^\perp\quad\text{and}\\
 L^{-1}LF&=F-\E[F]\quad\text{for all } F\in\dom(L)\,.
\end{align*}
Using the first of these identities as well as \eqref{intparts} we obtain that,  for $F,G$ such that $G, \, G\, L^{-1}(F-\E(F)) \in \dom L$,
\begin{align}\label{cov}
 \Cov(F,G)&=\E\bigl[G\bigl(F-\E[F]\bigr)\bigr]=\E\bigl[G\cdot LL^{-1}\bigl(F-\E[F]\bigr)\bigr]\notag\\
& =-\E\bigl[\Gamma\bigl(G,L^{-1}\bigl(F-\E[F]\bigr)\bigr]\,.
\end{align}
In particular, if $F=J_m(f)$ is a multiple integral of order $m\in\N$ such that $F^2\in \dom(L)$, then $\E[F]=0$, $L^{-1}F= - m^{-1}F$ and 
\begin{align}\label{varIm}
 \Var(F)=\frac{1}{m}\E\bigl[\Gamma(F,F)\bigr]\,.
\end{align}

\begin{lemma}\label{chaosrank}
Let $m,n\geq1$ be integers and let the discrete multiple integrals $F=J_m(f)$ and $G=J_n(g)$ be in $L^4(\P)$ and given by 
kernels $f\in\ell_0^2(\N)^{\circ m}$ and $g\in\ell_0^2(\N)^{\circ n}$, respectively.
\begin{enumerate}[{\normalfont (a)}]
\item The product $FG\in L^2(\P)$ has a finite chaotic decomposition of the form 
\[FG=\sum_{r=0}^{m+n}\proj{FG}{r}=\sum_{r=0}^{m+n}J_r(h_r)\]
for certain kernels $h_r\in\ell_0^2(\N)^{\circ r}$, $r=0,\dotsc,m+n$.
\item The kernel $h_{m+n}$ in {\normalfont(a)} is explicitly given by $h_{m+n}=f\tilde{\otimes}g \mathds{1}_{\Delta_{m+n}}$,\\ 
where $f\otimes g\in\ell^{2}(\N^{m+n})$ denotes the tensor product of $f$ and $g$ given by 
\[f\otimes g(i_1,\dotsc,i_{m+n})=f(i_1,\dotsc,i_m) g(i_{m+1},\dotsc,i_{m+n})\]
and $f\tilde{\otimes}g$ denotes its canonical symmetrization.
\end{enumerate}
\end{lemma}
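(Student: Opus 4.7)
The plan is to reduce the claim to a purely algebraic fact about finite-index truncations via $L^4$-approximation, then to exploit the orthogonality of the chaoses to pass to the limit. For each $N\in\N$, set $f^{(N)}=f\mathds{1}_{[N]^m}$ and $g^{(N)}=g\mathds{1}_{[N]^n}$ and denote $F^{(N)}:=J_m(f^{(N)})$, $G^{(N)}:=J_n(g^{(N)})$. By Lemma \ref{l4conv}, both $F^{(N)}\to F$ and $G^{(N)}\to G$ in $L^4(\P)$, so that $F^{(N)}G^{(N)}\to FG$ in $L^2(\P)$ by the Cauchy--Schwarz inequality. This step also guarantees that $FG$ actually lies in $L^2(\P)$.

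Next I would argue that $F^{(N)}G^{(N)}$ admits a chaos decomposition of the claimed form for each finite $N$. Being a polynomial of degree $m+n$ in the finitely many variables $Y_1,\ldots,Y_N$, the product can be repeatedly reduced using the structure identity $Y_k^2=1+\frac{q_k-p_k}{\sqrt{p_kq_k}}Y_k$ from \eqref{strucid}; after finitely many applications it becomes a linear combination of monomials $Y_{i_1}\cdots Y_{i_r}$ with pairwise distinct indices $i_1<\ldots<i_r$ and $r\in\{0,1,\ldots,m+n\}$. Grouping by $r$ and symmetrizing yields
\[
F^{(N)}G^{(N)}=\sum_{r=0}^{m+n}J_r\bigl(h_r^{(N)}\bigr),\qquad h_r^{(N)}\in\ell_0^2(\N)^{\circ r}.
\]
Applying the orthogonal projector $\proj{\cdot}{r}$ and using its continuity in $L^2(\P)$ then shows that $J_r(h_r^{(N)})\to\proj{FG}{r}$ in $L^2(\P)$; by the isometry \eqref{iso} this forces convergence $h_r^{(N)}\to h_r$ in $\ell^2(\N^r)$ for some $h_r\in\ell_0^2(\N)^{\circ r}$ with $\proj{FG}{r}=J_r(h_r)$. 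Since $\proj{FG}{r}=0$ for $r>m+n$, the chaotic expansion of $FG$ truly terminates at rank $m+n$, proving (a).

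For (b), I would inspect which terms in the expansion
\[
F^{(N)}G^{(N)}=\sum_{(\vec{i},\vec{j})\in[N]^m\times[N]^n}f^{(N)}(\vec{i})\,g^{(N)}(\vec{j})\,Y_{i_1}\cdots Y_{i_m}Y_{j_1}\cdots Y_{j_n}
\]
contribute to the chaos of order exactly $m+n$: any single coincidence $i_a=j_b$ activates the structure identity \eqref{strucid} and lowers the number of distinct $Y$-factors by at least one, so that only the tuples $(\vec{i},\vec{j})$ with all $m+n$ indices pairwise distinct survive at the top level. Restricting the sum to such tuples and grouping by unordered support yields $h_{m+n}^{(N)}=f^{(N)}\tilde{\otimes}g^{(N)}\mathds{1}_{\Delta_{m+n}}$, which converges in $\ell^2(\N^{m+n})$ to $f\tilde{\otimes}g\mathds{1}_{\Delta_{m+n}}$ as $N\to\infty$. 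By the uniqueness of the chaos decomposition established in (a), this identifies $h_{m+n}$.

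The main obstacle is the algebraic bookkeeping in the reduction step: one has to verify cleanly that every collision among the indices of $\vec{i}$ and $\vec{j}$ strictly lowers the chaos rank of the resulting monomial, so that the rank-$(m+n)$ coefficient is exactly the symmetrized tensor product restricted to $\Delta_{m+n}$, with no stray contribution from partially-collapsed products arising from multiple coincidences. Once this is established, the rest is a routine $L^2$-approximation argument leveraging the orthogonality of the Rademacher chaoses.
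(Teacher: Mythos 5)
Your proof is correct, but it follows a genuinely different route from the paper's. The paper invokes the Stroock-type formula \eqref{stroock} to write the $r$-th kernel of $FG$ directly as $h_r(k_1,\dotsc,k_r)=\frac{1}{r!}\,\E\bigl[FG\cdot Y_{k_1}\cdots Y_{k_r}\bigr]$ and then computes the expectation of the resulting products of $Y$'s: for pairwise distinct $k_1,\dotsc,k_r$ with $r>m+n$ some factor $Y_{k_l}$ must appear to the first power, so independence and $\E[Y_{k_l}]=0$ kill the term, which gives (a); for $r=m+n$ the only surviving configurations are exact matchings of $\{k_1,\dotsc,k_r\}$ with $\{i_1,\dotsc,i_m,j_1,\dotsc,j_n\}$, and counting these matchings produces $f\tilde{\otimes}g\,\mathds{1}_{\Delta_{m+n}}$ at once. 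You instead truncate to cylindrical functionals, carry out the algebraic reduction of repeated indices via the structure identity \eqref{strucid}, read off the finite chaos decomposition and the top-order kernel explicitly at each finite level, and then pass to the limit using Lemma \ref{l4conv}, the continuity of the projections $\proj{\cdot}{r}$, and the isometry \eqref{iso}. The paper's argument is shorter and identifies all the relevant kernels in one stroke (at the cost of having to justify the interchange of the infinite double sum with the expectation, which both approaches must address in some form); yours is more elementary in that it avoids the Stroock formula entirely and makes the mechanism behind the rank bound completely transparent, at the price of an extra approximation layer. The one point you flag as the ``main obstacle'' --- that every collision $i_a=j_b$ strictly lowers the number of distinct $Y$-factors and hence the attainable chaos rank --- is indeed the crux of your step (b), and your justification (each application of \eqref{strucid} replaces $Y_k^2$ by an affine expression in $Y_k$, so the reduced monomial is supported on the set of distinct indices, of cardinality strictly less than $m+n$) is sound; no stray contributions can reach rank $m+n$.
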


The proof of Lemma \ref{chaosrank} is deferred to Section \ref{proofs}.

\begin{remark}\label{crrem}
 Note that the statements (a) and (b) of Lemma \ref{chaosrank} are not direct consequences of the so-called product formula for discrete multiple integrals proved independently in \cite{PrTo} and \cite{Krok15}. Indeed, for these formulas 
 to apply one would have to further assume the square-integrability of the respective involved \textit{contraction kernels} which does not follow from the minimal assumptions of Lemma \ref{chaosrank}. We stress that it is one of the features 
 of the approach via carr\'{e} du champ operators that no precise formulas for the combinatorial coefficients usually appearing in product formulas are needed (see also \cite{Led12}, \cite{ACP} and \cite{DP17}). 
 However, in the case of a symmetric Rademacher sequence Lemma \ref{chaosrank} is a consequence of the product formula for discrete multiple integrals stated as Proposition 2.9 in \cite{NPR-ejp}. 
\end{remark}

\begin{lemma}\label{defgammalemma}
For $F,G\in\dom(D)$, the random functions $(\omega,k)\mapsto D_kF(\omega) D_kG(\omega)$ and 
$(\omega,k)\mapsto \frac{q_k-p_k}{\sqrt{p_kq_k}}Y_k(\omega)D_kF(\omega) D_kG(\omega)$ are in $L^1(\P\otimes\kappa)$. In particular, 
the two series $\sum_{k=1}^\infty D_kF D_kG$ and $\sum_{k=1}^\infty\frac{q_k-p_k}{\sqrt{p_kq_k}}Y_kD_kF D_kG$ are both $\P$-a.s.\ absolutely convergent.
\end{lemma}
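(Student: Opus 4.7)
The plan is to verify $L^1(\P\otimes\kappa)$ integrability of both random functions directly; the $\P$-almost sure absolute convergence of the corresponding series will then follow at once from Tonelli's theorem.

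For the first function $(\omega,k)\mapsto D_kF(\omega)\,D_kG(\omega)$, I would apply the Cauchy--Schwarz inequality twice. Pointwise in $k$, one has
\begin{equation*}
\E\babs{D_kF\, D_kG}\leq\sqrt{\E\bigl[(D_kF)^2\bigr]}\sqrt{\E\bigl[(D_kG)^2\bigr]}\,,
\end{equation*}
and a further discrete Cauchy--Schwarz over $k$ gives
\begin{equation*}
\sum_{k=1}^\infty\E\babs{D_kF\, D_kG}\leq\Bigl(\sum_{k=1}^\infty\E\bigl[(D_kF)^2\bigr]\Bigr)^{1/2}\Bigl(\sum_{k=1}^\infty\E\bigl[(D_kG)^2\bigr]\Bigr)^{1/2}<\infty\,,
\end{equation*}
where the finiteness of each factor follows from the characterization \eqref{domdchar} of $\dom(D)$ applied to $F$ and to $G$, respectively.

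For the second function, the key observation is that the structural identity \eqref{strucid} rearranges to $\frac{q_k-p_k}{\sqrt{p_kq_k}}Y_k=Y_k^2-1$, so that pointwise
\begin{equation*}
\Babs{\frac{q_k-p_k}{\sqrt{p_kq_k}}Y_k\, D_kF\, D_kG}=\babs{Y_k^2-1}\cdot\babs{D_kF\, D_kG}\leq(Y_k^2+1)\babs{D_kF\, D_kG}\,.
\end{equation*}
Since $D_kF$ and $D_kG$ depend only on the variables $(X_j)_{j\neq k}$, they are independent of $Y_k$, and $\E[Y_k^2]=1$. Thus, by independence,
\begin{equation*}
\E\Babs{\frac{q_k-p_k}{\sqrt{p_kq_k}}Y_k\, D_kF\, D_kG}\leq\E[Y_k^2+1]\cdot\E\babs{D_kF\, D_kG}=2\,\E\babs{D_kF\, D_kG}\,,
\end{equation*}
and summing over $k$ together with the bound of the previous paragraph gives finiteness of the second series as well. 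The in-particular statement about $\P$-a.s.\ absolute convergence then follows from Tonelli's theorem applied to the two nonnegative integrands.

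The only potential obstacle is controlling the weight $(q_k-p_k)/\sqrt{p_kq_k}$, which is unbounded as $p_k$ approaches $0$ or $1$ and which no hypothesis of the lemma constrains. The identity \eqref{strucid} is precisely what allows this singular factor to be absorbed by the accompanying $Y_k$, producing the uniformly $L^1$-bounded random variable $Y_k^2-1$; without this absorption one would need quantitative control on the decay of $p_kq_k$, which is not available at the level of generality of the lemma.
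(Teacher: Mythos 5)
Your proof is correct and follows essentially the same route as the paper's: two applications of Cauchy--Schwarz for the first function (you apply them in the opposite order, but arrive at the same finite bound $\bigl(\sum_k\E[(D_kF)^2]\bigr)^{1/2}\bigl(\sum_k\E[(D_kG)^2]\bigr)^{1/2}$ guaranteed by \eqref{domdchar}), then independence of $Y_k$ from $D_kF\,D_kG$ together with a uniform bound of $2$ on the expected weight for the second function, and finally Fubini--Tonelli. The only cosmetic difference is in how that weight is controlled: you absorb $\frac{q_k-p_k}{\sqrt{p_kq_k}}Y_k$ via the structural identity \eqref{strucid} into $Y_k^2-1$ and use $\E[Y_k^2]=1$, whereas the paper computes $\E\abs{Y_k}=2\sqrt{p_kq_k}$ directly and invokes $\abs{p_k-q_k}\leq1$; both computations yield the same constant.
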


\begin{proof}
By the Cauchy-Schwarz inequality for $\kappa$ we have 
\begin{align*}
\sum_{k=1}^\infty \babs{D_kF}\babs{D_kG}&\leq \biggl(\sum_{k=1}^\infty\bigl(D_kF\bigr)^2\biggr)^{1/2}
\biggl(\sum_{k=1}^\infty\bigl(D_kG\bigr)^2\biggr)^{1/2}\,.
\end{align*}
Hence, now using the Cauchy-Schwarz inequality for $\P$ yields
\begin{align}\label{gale1}
\E\Bigl[\sum_{k=1}^\infty \babs{D_kF}\babs{D_kG}\Bigr] \leq\biggl(\E\biggl[\sum_{k=1}^\infty\bigl(D_kF\bigr)^2\biggr]\biggr)^{1/2}
\biggl(\E\biggl[\sum_{k=1}^\infty\bigl(D_kG\bigr)^2\biggr]\biggr)^{1/2} <\infty\,,
\end{align}
as $F,G\in\dom(D)$. Now let us turn to the second series. An easy computation shows that $\E\abs{Y_k}=2\sqrt{p_k q_k}$. 
Hence, using the independence of $Y_k$ and $D_kF D_kG$, $\abs{p_k-q_k}\leq1$ as well as \eqref{gale1} gives
\begin{align}\label{gale2}
\E\Bigl[\sum_{k=1}^\infty \frac{\abs{q_k-p_k}}{\sqrt{p_kq_k}}\babs{Y_k}\babs{D_kF}\babs{D_kG}\Bigr] &=2\sum_{k=1}^\infty\abs{p_k-q_k}\E\abs{D_kF D_kG}\notag\\
&\leq 2\E\Bigl[\sum_{k=1}^\infty \babs{D_kF}\babs{D_kG}\Bigr]<\infty\,.
\end{align}
The $\P$-a.s.\ absolute convergence of both series now follows from \eqref{gale1}, \eqref{gale2} and from the Fubini-Tonelli theorem.
\end{proof}

Thanks to Lemma \ref{defgammalemma}, for $F,G\in\dom(D)$ we can define 
\begin{align}
\Gamma_0(F,G)&:=\sum_{k=1}^\infty \bigl(D_kF\bigr)\bigl( D_kG\bigr)
+\frac12\sum_{k=1}^\infty\frac{q_k-p_k}{\sqrt{p_kq_k}} \bigl(D_kF\bigr)\bigl( D_kG\bigr) Y_k\label{gamma0a}\\
&=\frac12 \sum_{k=1}^\infty \bigl(D_kF\bigr)\bigl( D_kG\bigr)+  \frac12 \sum_{k=1}^\infty \bigl(D_kF\bigr)\bigl( D_kG\bigr)Y_k^2\label{gamma0c} \,,
\end{align}
which is in $L^1(\P)$. Note that \eqref{gamma0c} holds true by virtue of \eqref{strucid}.
In particular, if $p_k=q_k=1/2$ for each $k\in\N$, then 
\begin{equation*}
\Gamma_0(F,G)=\sum_{k=1}^\infty \bigl(D_kF\bigr)\bigl( D_kG\bigr)=\langle DF, DG\rangle_{\ell^2(\N)}\,.
\end{equation*}
By means of a simple computation one immediately checks that for all $k\in\N$
\begin{align*}
\bigl(D_kF\bigr)\bigl( D_kG\bigr)+\frac{q_k-p_k}{2\sqrt{p_kq_k}} \bigl(D_kF\bigr)\bigl( D_kG\bigr) Y_k
=\frac{q_k}{2}\bigl(D_k^-F\bigr)\bigl(D_k^-G\bigr)+\frac{p_k}{2}\bigl(D_k^+F\bigr)\bigl(D_k^+G\bigr)\,.
\end{align*}
Hence, we obtain the following alternative representation for $\Gamma_0$ in terms of the operators $D_k^\pm$ which will be very useful in order to apply Stein's method below.
\begin{equation}\label{gamma0b}
\Gamma_0(F,G)=\frac12\sum_{k=1}^\infty \Bigl(q_k\bigl(D_k^-F\bigr)\bigl(D_k^-G\bigr)+p_k\bigl(D_k^+F\bigr)\bigl(D_k^+G\bigr)\Bigr)
\end{equation}
for all $F,G\in\dom(D)$.

The next result makes sure that $\Gamma_0$ and $\Gamma$ indeed coincide for functionals in $L^4(\P)$ having a finite chaotic decomposition.

\begin{prop}\label{gammaprop}
For all $F,G\in\calS\cap L^4(\P)$ we have $F,G,FG\in\dom(L)$ and $\Gamma(F,G)=\Gamma_0(F,G)$.
\end{prop}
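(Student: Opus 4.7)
The plan is to establish the identity by a direct pathwise computation, after first clearing the domain conditions by bilinearity and a reduction to pure multiple integrals.

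\textbf{Step 1 (Domain conditions).} Since $F,G\in\calS$ have finite chaotic decompositions $F=\sum_{m=0}^{M}J_m(f_m)$ and $G=\sum_{n=0}^{N}J_n(g_n)$, the defining series $\sum_n n^{2}n!\,\norm{f_n}^{2}$ terminates, so $F,G\in\dom(L)$ trivially, with $LF,LG\in\calS$. By the bilinearity of both sides of the identity $\Gamma(\cdot,\cdot)=\Gamma_0(\cdot,\cdot)$ it will suffice to treat a pair $F=J_m(f)$, $G=J_n(g)$ of multiple integrals in $L^{4}(\P)$; the $L^{4}$-membership of each chaos component of the original $F,G$ will be obtained by truncating the kernels to $[n]^{m}$ (as in \eqref{martdef}) and passing to the limit via Lemma \ref{l4conv}. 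For such $F,G$, Lemma \ref{chaosrank}(a) yields a \emph{finite} chaotic decomposition $FG=\sum_{r=0}^{m+n}J_r(h_r)$, which immediately places $FG\in\dom(L)$.

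\textbf{Step 2 (Pathwise identity for $\Gamma$).} With these domain issues settled, I apply the pathwise representation \eqref{prepl2} of $L$ to $F$, $G$, and $FG$. The key algebraic observation is that from $(FG)_k^{\pm}=F_k^{\pm}G_k^{\pm}$ together with $F_k^{\pm}=F+D_k^{\pm}F$ and $G_k^{\pm}=G+D_k^{\pm}G$ one obtains the discrete Leibniz rule
\begin{equation*}
D_k^{\pm}(FG)=F\,D_k^{\pm}G+G\,D_k^{\pm}F+(D_k^{\pm}F)(D_k^{\pm}G).
\end{equation*}
Substituting this into $L(FG)=\sum_{k}\bigl[q_k D_k^{-}(FG)+p_k D_k^{+}(FG)\bigr]$ and subtracting
\begin{equation*}
FLG+GLF=\sum_{k}\Bigl[q_k\bigl(F D_k^{-}G+G D_k^{-}F\bigr)+p_k\bigl(F D_k^{+}G+G D_k^{+}F\bigr)\Bigr],
\end{equation*}
the six ``linear in $D^{\pm}$'' terms cancel and one is left with
\begin{equation*}
L(FG)-FLG-GLF=\sum_{k=1}^{\infty}\Bigl[q_k(D_k^{-}F)(D_k^{-}G)+p_k(D_k^{+}F)(D_k^{+}G)\Bigr].
\end{equation*}
Dividing by $2$ and comparing with the representation \eqref{gamma0b} of $\Gamma_0$ yields $\Gamma(F,G)=\Gamma_0(F,G)$.

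\textbf{Step 3 (Convergence issues).} The pathwise representation \eqref{prepl2} is only stated on $\calS$, so the manipulations in Step 2 must be justified term by term. For this I approximate $F$ and $G$ by their truncations $F^{(N)}:=J_m(f\mathds{1}_{[N]^{m}})$ and $G^{(N)}:=J_n(g\mathds{1}_{[N]^{n}})$, each of which is a bounded polynomial in finitely many $Y_k$'s and for which every sum involved is finite. For those truncations the identity $\Gamma(F^{(N)},G^{(N)})=\Gamma_0(F^{(N)},G^{(N)})$ reduces to a finite-dimensional verification. Lemma \ref{l4conv} yields $L^{4}$-convergence of $F^{(N)}\to F$ and $G^{(N)}\to G$, hence $L^{2}$-convergence of $F^{(N)}G^{(N)}\to FG$ and of both sides of the identity, completing the proof.

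\textbf{Main obstacle.} The purely algebraic heart of the argument -- the cancellation in Step 2 -- is short and essentially mechanical. The real work lies in Steps 1 and 3: ensuring that the full non-diffusive Ornstein--Uhlenbeck machinery (in particular $FG\in\dom(L)$ and the legitimacy of the pathwise formula applied to $L(FG)$) extends from the safe haven of finite polynomials to arbitrary elements of $\calS\cap L^{4}(\P)$. This is where Lemma \ref{chaosrank}(a) and Lemma \ref{l4conv} are indispensable.
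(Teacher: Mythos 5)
Your Step 2 is, at its core, the paper's own computation, merely written in the equivalent $(D_k^{\pm},p_k,q_k)$ coordinates instead of the $Y_kD_k$ coordinates: the paper expands $L(FG)-FLG-GLF$ via the representation \eqref{prepl1} and the product rule \eqref{prodD}, cancels the terms linear in $D_k$, and uses the structural identity \eqref{strucid} to match \eqref{gamma0a}; you perform the same cancellation against \eqref{gamma0b} using \eqref{prepl2} and the Leibniz rules \eqref{mix+}--\eqref{mix-}. The genuine problem lies in your Step 1. The reduction by bilinearity to pure multiple integrals requires each chaos component $J_m(f_m)$ of $F\in\calS\cap L^4(\P)$ to lie in $L^4(\P)$ (otherwise the products $J_m(f_m)J_n(g_n)$ need not be in $L^2(\P)$ and the individual terms $\Gamma(J_m(f_m),J_n(g_n))$ need not be defined), and your justification of this via Lemma \ref{l4conv} is circular: the proof of that lemma bounds $\E\babs{J_m^{(n)}(f)}^4$ by $\E\babs{J_m(f)}^4$ through conditional Jensen, so it presupposes exactly the $L^4$-membership you are trying to establish. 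In the general inhomogeneous Rademacher setting there is no uniform hypercontractivity to fall back on, so this is not a cosmetic point.

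The gap is avoidable because the reduction is unnecessary: the paper works directly with $F,G\in\calS\cap L^4(\P)$, observes that $FG\in\calS$ by Lemma \ref{chaosrank} (a), hence $F,G,FG\in\dom(L)$, and then applies the pathwise representation of $L$ --- valid on all of $\calS$ --- to $F$, $G$ and $FG$ simultaneously; Lemma \ref{defgammalemma} guarantees that the combined series converges absolutely $\P$-a.s., so no truncation-and-limit argument (your Step 3) is needed either. If you delete the decomposition in Step 1 and run your Step 2 directly on $F$, $G$, $FG\in\calS$, you recover the paper's proof. A secondary issue: your Step 3 asserts that $L^2$-convergence of $F^{(N)}G^{(N)}\to FG$ gives convergence of ``both sides of the identity,'' but $L$ is unbounded, so one would additionally need either convergence of $L(F^{(N)}G^{(N)})$ together with the closedness of $L$, or a uniform bound on the chaos orders involved; this too becomes moot once the approximation scheme is dropped.
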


\begin{proof}
Since $F,G\in\calS\cap L^4(\P)$ we have $FG\in\calS$ by Lemma \ref{chaosrank} (a). As $\calS\subseteq\dom(L)\subseteq\dom(D)$ both 
$\Gamma(F,G)$ and $\Gamma_0(F,G)$ are defined. 
Using \eqref{prodD} and \eqref{prepl1} we obtain
\begin{align}\label{gap1}
2\Gamma(F,G)&=L(FG)-FLG-GLF\notag\\
&=-\biggl(\sum_{k=1}^\infty Y_k D_k(FG)-F\sum_{k=1}^\infty Y_kD_kG-G\sum_{k=1}^\infty Y_kD_kF\biggr)\notag\\
&=\sum_{k=1}^\infty\biggl(2Y_k^2+\frac{Y_k(p_k-q_k)}{\sqrt{p_kq_k}}\biggr)D_kF D_kG\notag\\
&=\sum_{k=1}^\infty\biggl(2+2\frac{q_k-p_k}{\sqrt{p_kq_k}}Y_k+\frac{p_k-q_k}{\sqrt{p_kq_k}}Y_k\biggr)D_kF D_kG\notag\\
&=\sum_{k=1}^\infty\biggl(2+\frac{q_k-p_k}{\sqrt{p_kq_k}}Y_k\biggr)D_kF D_kG=2\Gamma_0(F,G)\,.
\end{align}
Here we have used identity \eqref{strucid} to obtain the fourth identity.

\end{proof}

\begin{prop}[Integration by parts]\label{intparts}
Let $H\in\dom(D)$ and $G\in\dom(L)$. Then, we have 
\[\E\bigl[HLG\bigr]=-\E\bigl[\Gamma_0(H,G)\bigr]\,.\]
\end{prop}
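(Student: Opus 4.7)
The plan is to reduce the statement to the adjoint/divergence identity $L = -\delta D$, and then to verify that the extra ``non-diffusive'' term in $\Gamma_0$ has zero expectation by an independence argument.

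More precisely, since $G\in\dom(L)$, equation \eqref{oplink} gives $DG\in\dom(\delta)$ and $LG=-\delta(DG)$. As $H\in\dom(D)$, the adjoint relation \eqref{intpartsdelta} applied to $F=H$ and $u=DG$ yields
\begin{equation*}
\E\bigl[HLG\bigr]=-\E\bigl[H\delta(DG)\bigr]=-\E\bigl[\langle DH,DG\rangle_{\ell^2(\N)}\bigr]=-\E\Bigl[\sum_{k=1}^\infty (D_kH)(D_kG)\Bigr].
\end{equation*}
Comparing this with the definition \eqref{gamma0a} of $\Gamma_0$, it remains to show that
\begin{equation*}
\E\Bigl[\sum_{k=1}^\infty \frac{q_k-p_k}{\sqrt{p_kq_k}}(D_kH)(D_kG)Y_k\Bigr]=0.
\end{equation*}

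The key observation is that, for each fixed $k\in\N$, the random variables $D_kH$ and $D_kG$ depend on $\omega$ only through the coordinates $\omega_j$ with $j\neq k$ (this is immediate from the pathwise definition $D_kF=\sqrt{p_kq_k}(F_k^+-F_k^-)$), and are therefore independent of $Y_k$, which is a function of $X_k$ alone. Since $\E[Y_k]=0$, each summand has vanishing expectation. To legitimately interchange expectation and summation I would invoke Lemma \ref{defgammalemma}, whose estimate \eqref{gale2} provides exactly the absolute integrability
\begin{equation*}
\sum_{k=1}^\infty \frac{\abs{q_k-p_k}}{\sqrt{p_kq_k}}\E\bigl[\abs{Y_k}\cdot\abs{D_kH}\cdot\abs{D_kG}\bigr]<\infty,
\end{equation*}
so that Fubini--Tonelli applies and the claimed cancellation follows termwise.

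Putting the two pieces together gives $\E[\Gamma_0(H,G)]=\E[\sum_k (D_kH)(D_kG)]=-\E[HLG]$, which is the assertion. The only subtle point is ensuring that the hypotheses $H\in\dom(D)$ and $G\in\dom(L)$ supply enough regularity both to invoke \eqref{intpartsdelta} and to apply Lemma \ref{defgammalemma}; both are handled precisely by those two inputs, so there is no genuine obstacle beyond careful bookkeeping.
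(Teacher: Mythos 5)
Your proof is correct, but it takes a genuinely different route from the paper's. You derive $\E[HLG]=-\E[\langle DH,DG\rangle_{\ell^2(\N)}]$ from the operator identity $LG=-\delta(DG)$ in \eqref{oplink} together with the duality \eqref{intpartsdelta}, and then dispose of the extra non-diffusive term $\tfrac12\sum_{k}\tfrac{q_k-p_k}{\sqrt{p_kq_k}}\,Y_k\,(D_kH)(D_kG)$ in $\Gamma_0$ by the independence of $Y_k$ from $(D_kH,D_kG)$ together with $\E[Y_k]=0$, with Lemma \ref{defgammalemma} licensing the interchange of sum and expectation. The paper instead works entirely with chaos expansions: writing $H=\sum_n J_n(h_n)$ and $G=\sum_n J_n(g_n)$, it computes $\E[HLG]=-\sum_n n\, n!\,\langle g_n,h_n\rangle_{\ell^2(\N^n)}$ from the isometry \eqref{iso}, and separately evaluates $\E[\Gamma_0(H,G)]=\sum_k\E[(D_kH)(D_kG)]$ (using the same independence/centering step you use) before expanding each $D_kH$ and $D_kG$ in chaos to arrive at the same sum. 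Your argument is shorter and avoids the second round of chaos computations, at the price of leaning on the quoted facts \eqref{oplink} and \eqref{intpartsdelta}, which the paper states without proof and which are themselves usually established by precisely the kind of chaos computation the paper performs; since these are presented as known prior to Proposition \ref{intparts}, there is no circularity, and your proof stands as a valid, more operator-theoretic alternative.
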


\begin{proof}
Let us denote by $H=\sum_{n=0}^\infty J_n(h_n)$ and $G=\sum_{n=0}^\infty J_n(g_n)$ the chaotic decompositions of $H$ and $G$, where $h_n, g_n\in\ell_0^2(\N)^{\circ n}$, $n\in\N_0$, are such that 
\[\sum_{n=1}^\infty n n!\norm{h_n}^2_{\ell^2(\N^n)}<\infty\quad\text{and}\quad \sum_{n=1}^\infty n^2 n!\norm{g_n}^2_{\ell^2(\N^n)}<\infty \,.\]
By \eqref{defL} we have $LG=-\sum_{n=1}^\infty nJ_n(g_n)$. Hence, by virtue of \eqref{iso} we have
 \begin{equation}\label{ip1}
  \E\bigl[HLG\bigr]=-\E\biggl[\biggl(\sum_{m=0}^\infty J_m(h_m)\biggr)\biggl(\sum_{n=1}^\infty nJ_n(g_n)\biggr)\biggr]=-\sum_{n=1}^\infty n n!\langle g_n,h_n\rangle_{\ell^2(\N^n)}\,.
 \end{equation}
On the other hand, using Lemma \ref{defgammalemma} and the fact that $Y_k$ is centered and independent of $D_kH D_kG$ for each $k\in\N$, we obtain that 
\begin{equation}\label{ip2}
 \E\bigl[\Gamma_0(H,G)\bigr]=\E\Bigl[\sum_{k=1}^\infty D_kH D_kG\Bigr]=\sum_{k=1}^\infty \E\bigl[D_kH D_kG\bigr]\,,
\end{equation}
where we could change the order of integration again due to Lemma \ref{defgammalemma}. Now, recall that 
\[D_kH=\sum_{m=1}^\infty  mJ_{m-1}\bigl(h_m(k,\cdot)\bigr)\quad\text{and}\quad D_kG=\sum_{n=1}^\infty nJ_{n-1}\bigl(g_n(k,\cdot)\bigr)\,,\quad k\in\N\,,\]
such that, again by \eqref{iso}, we obtain
\begin{align}\label{ip3}
 \E\bigl[\Gamma_0(H,G)\bigr]&=\sum_{k=1}^\infty\E\bigl[D_kH D_kG\bigr]=\sum_{k=1}^\infty\sum_{m=1}^\infty m^2(m-1)! \langle h_{m}(k,\cdot), g_m(k,\cdot)\rangle_{\ell^2(\N^{m-1})}\notag\\
 &=\sum_{m=1}^\infty m m!\langle g_m,h_m\rangle_{\ell^2(\N^m)}\,.
\end{align}
The result now follows from \eqref{ip1} and \eqref{ip3}\,.
\end{proof}

\section{Useful identities and estimates for multiple integrals}\label{integrals}

The next result is crucial in order to keep track of the non-diffusiveness of the operator $L$ in our bounds.
It is the Rademacher analog of Lemma 2.7 in \cite{DP17} dealing with the corresponding operators on an abstract Poisson space. Its proof is exactly the same as the proof of Lemma 2.7 in \cite{DP17} and is hence omitted.

\begin{lemma}\label{Difflemma}
\begin{enumerate}[{\normalfont (a)}]
 \item For $F:\Omega\rightarrow\R$ and $k\in\N$ we have the identities
 \begin{align}
  D^+_kF^2&=\bigl(D^+_kF\bigr)^2+2F D^+_kF\label{dp2}\,,\\
  D^+_kF^3&=\bigl(D^+_kF\bigr)^3+3F^2 D^+_kF+3F\bigl(D^+_kF\bigr)^2\label{dp3}\,,\\
  D^-_kF^2&=\bigl(D^-_kF\bigr)^2+2F D_k^- F\label{dm2}\,,\\
  D^-_kF^3&=\bigl(D^-_kF\bigr)^3+3F^2D^-_kF+3F\bigl(D^-_k F\bigr)^2\label{dm3}\,.
\end{align}
\item Let $\psi\in C^1(\R)$ be such that $\psi'$ is Lipschitz with minimum Lipschitz-constant $\fnorm{\psi''}$. Then, for $F:\Omega\rightarrow\R$ and $k\in\N$, there are random quantities 
$R_\psi^+(F,k)$ and $R_\psi^-(F,k)$ such that 
\begin{equation*}
 \babs{R_\psi^+(F,k)}\leq\frac{\fnorm{\psi''}}{2}\,,\quad \babs{R_\psi^-(F,k)}\leq\frac{\fnorm{\psi''}}{2}
\end{equation*}
and 
\begin{align*}
 D_k^+\psi(F)&=\psi'(F)D_k^+F +R_\psi^+(F,k)\bigl(D^+_kF\bigr)^2\,,\\
 D_k^-\psi(F)&=\psi'(F)D_k^-F +R_\psi^-(F,k)\bigl(D^-_kF\bigr)^2\,.
\end{align*}
\end{enumerate}
\end{lemma}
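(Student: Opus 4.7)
The plan is to prove both parts by direct computation, using only the defining identities $D_k^+F=F_k^+-F$ and $D_k^-F=F_k^--F$ together with (for part (b)) the integral form of Taylor's theorem. Neither part relies on any property of $F$ beyond its being a function on $\Omega$, so everything is pointwise in $\omega$.

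For part (a), I would simply expand. Writing $a:=F_k^+$ and $b:=F$, one has $a^2-b^2=(a-b)(a+b)=(a-b)\bigl((a-b)+2b\bigr)=(a-b)^2+2b(a-b)$, which is precisely \eqref{dp2}. For \eqref{dp3}, substituting $a=(a-b)+b$ into $a^3=\bigl((a-b)+b\bigr)^3$ gives $a^3-b^3=(a-b)^3+3b(a-b)^2+3b^2(a-b)$, which is the claimed identity. The formulas \eqref{dm2} and \eqref{dm3} are obtained by the same expansions with $F_k^-$ in place of $F_k^+$. All four identities are purely algebraic.

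For part (b), I would apply Taylor's theorem in integral remainder form to $\psi$ at the scalar pair $(F,F_k^+)$ (the $D_k^-$ case is identical with $F_k^-$ in place of $F_k^+$). Since $\psi\in C^1(\R)$,
\begin{equation*}
\psi(F_k^+)-\psi(F)=\psi'(F)(F_k^+-F)+\int_F^{F_k^+}\bigl(\psi'(t)-\psi'(F)\bigr)\,dt.
\end{equation*}
The Lipschitz hypothesis on $\psi'$ with minimal constant $\fnorm{\psi''}$ gives $\babs{\psi'(t)-\psi'(F)}\leq\fnorm{\psi''}\abs{t-F}$, whence
\begin{equation*}
\Babs{\int_F^{F_k^+}\bigl(\psi'(t)-\psi'(F)\bigr)\,dt}\leq\frac{\fnorm{\psi''}}{2}\bigl(F_k^+-F\bigr)^2=\frac{\fnorm{\psi''}}{2}\bigl(D_k^+F\bigr)^2.
\end{equation*}
Setting $R_\psi^+(F,k)$ equal to the above integral divided by $(D_k^+F)^2$ when $D_k^+F\neq 0$ and equal to $0$ otherwise yields the claimed representation together with $\babs{R_\psi^+(F,k)}\leq \fnorm{\psi''}/2$. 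The $D_k^-$ case is completely analogous.

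The only mild subtlety, which I would flag as the main technical point rather than a genuine obstacle, is that under the assumption that $\psi'$ is merely Lipschitz, the second derivative $\psi''$ exists only almost everywhere (by Rademacher's theorem), so one must use the integral-form Taylor expansion above rather than any pointwise mean-value version; however the Lipschitz bound on $\psi'$ is all that is actually needed for the estimate, and the measurability of $R_\psi^+(F,k)$ and $R_\psi^-(F,k)$ in $\omega$ follows from the measurability of $F$ and $F_k^\pm$. Everything else reduces to the elementary algebra and estimate displayed above.
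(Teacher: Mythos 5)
Your proposal is correct and follows essentially the same route the paper intends (the paper omits the proof, pointing to Lemma 2.7 of \cite{DP17}, but the argument is exactly this: elementary binomial expansion of $(F_k^\pm)^j-F^j$ for part (a), and Taylor's formula with a quadratic remainder bounded by $\fnorm{\psi''}/2$ for part (b)). Your explicit use of the integral form of the remainder, together with the observation that the remainder vanishes when $D_k^\pm F=0$ so that $R_\psi^\pm(F,k)$ may be set to zero there, is a careful and fully adequate way of making the "suitable version of Taylor's formula" precise.
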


\begin{remark}\label{Drem}
 Note that, by virtue of \eqref{dp2} and \eqref{dm2} and by polarization, for $F,G:\Omega\rightarrow\R$ and $k\in\N$ we also deduce the product rules
  \begin{eqnarray}
   D^+_k\bigl(FG\bigr)&=& G D^+_k F+F D^+_k G+ \bigl(D^+_kF \bigr)\bigl(D^+_kG\bigr)\label{mix+}\,,\\
   D^-_k\bigl(FG\bigr)&=& G D_k^- F+F D_k^- G + \bigl(D^-_kF \bigr)\bigl(D_k^-G\bigr)\label{mix-}\,.
  \end{eqnarray}
\end{remark}

\begin{lemma}\label{comblemma}
Let $f\in\ell_0^2(\N)^{\circ m}$, $m \in \N$. Then, we have 
\begin{enumerate}[{\normalfont (a)}]
\item { $\displaystyle (2m)!\norm{f\tilde{\otimes} f}_{\ell^2(\N^{2m})}^2=2\bigl(m!\norm{f}_{\ell^2(\N^{m})}^2\bigr)^2+D_m(f)$\,, where $D_m(f) \in (0,\infty)$ is a constant depending on $f$ and $m$, and}
\item $\displaystyle (2m)!\norm{f\tilde{\otimes} f\mathds{1}_{\Delta_{2m}^c} }_{\ell^2(\N^{2m})}^2\leq \gamma_m m!\norm{f}_{\ell^2(\N^m)}^2 \sup_{j\in\N}\Inf_j(f)$, where
\begin{align}\label{gamma_n}
\gamma_m := 2(2m-1)!\sum_{r=1}^m r!\binom{m}{r}^2 \in (0,\infty)
\end{align}
is a combinatorial constant which only depends on $m$. 
\end{enumerate}
\end{lemma}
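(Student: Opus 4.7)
For part~(a), the plan is to exploit the explicit identity
\[
(f\tilde\otimes f)(i_1,\dotsc,i_{2m}) \;=\; \binom{2m}{m}^{-1}\sum_{\substack{A\subseteq[2m]\\|A|=m}} f(i_A)\,f(i_{A^c})\,,
\]
valid by the symmetry of $f$ (where $i_A$ denotes the sub-tuple of $(i_1,\dotsc,i_{2m})$ indexed by $A$, whose internal ordering is irrelevant). Squaring, interchanging summations, and parameterizing pairs $(A,B)$ of $m$-subsets of $[2m]$ by $r:=|A\cap B|$, the partition $[2m]=(A\cap B)\sqcup(A\setminus B)\sqcup(B\setminus A)\sqcup(A\cup B)^c$ shows that the inner sum over $i\in\N^{2m}$ factorizes into the squared norm of a partial contraction of $f$ with itself, hence is non-negative. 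For the extreme values $r\in\{0,m\}$ this inner sum reduces to $\norm{f}_{\ell^{2}(\N^{m})}^{4}$, and there are exactly $2\binom{2m}{m}$ such pairs; multiplying through by $(2m)!$ and using $(2m)!\binom{2m}{m}^{-1}=(m!)^{2}$ reproduces the main term $2(m!\norm{f}_{\ell^{2}(\N^{m})}^{2})^{2}$, while the non-negative residual from $1\leq r\leq m-1$ defines $D_m(f)$.

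For part~(b), the same expansion is restricted to $i\in\Delta_{2m}^c$. Because $f$ vanishes on diagonals, both $i_A$ and $i_{A^c}$ must themselves be diagonal-free, so any coincidence among the coordinates of $i$ has to occur across the two blocks $A$ and $A^c$. My plan is to stratify $\Delta_{2m}^c$ by the number $r\in\{1,\dotsc,m\}$ of cross-block coincidences and by the way the matching positions are paired, and then to rewrite each stratum as a partial contraction of $f$ with itself in which $r$ of the $m$ arguments are identified in pairs. A single Cauchy--Schwarz applied at one of the coincident indices $j$ extracts a factor $\sup_{j\in\N}\Inf_j(f)$, while the remaining sums are bounded by $\norm{f}_{\ell^{2}(\N^{m})}^{2}$. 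The number of cross-block pairings for fixed $r$ is $r!\binom{m}{r}^{2}$, and combining this with the $(2m-1)!$ factor produced when one restores the full $(2m)!$ normalization of the symmetric tensor yields $\gamma_m=2(2m-1)!\sum_{r=1}^{m}r!\binom{m}{r}^{2}$.

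The main obstacle is the combinatorial bookkeeping in part~(b): one has to verify that in every configuration each copy of $f$ can be absorbed either into $\norm{f}_{\ell^{2}(\N^{m})}^{2}$ or into $\sup_{j\in\N}\Inf_j(f)$, and that a \emph{single} factor of $\sup_{j\in\N}\Inf_j(f)$ suffices, rather than an uncontrolled product of influences. This is achieved by taking the supremum outside the sum at exactly one coincident index $j$ per configuration and bounding the remaining integration by the global $\ell^{2}$-norm of $f$. The leading factor $2$ in $\gamma_m$ reflects the symmetry between the two copies of $f$, $(2m-1)!$ accounts for the over-counting when passing from the symmetrized kernel back to the unsymmetrized sum over $\N^{2m}$, and $r!\binom{m}{r}^{2}$ counts the cross-block pairings of the $r$ coincident positions.
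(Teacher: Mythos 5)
Your plan is sound and, for part (b), follows essentially the same route as the paper: stratify the off-diagonal configurations by the number $r$ of cross-block coincidences, count the pairings as $r!\binom{m}{r}^2$, and use one H\"older/Cauchy--Schwarz step to peel off a single factor $\sup_{j}\Inf_j(f)$ while absorbing the rest into $m!\norm{f}_{\ell^2(\N^m)}^2$. The one point you must make explicit in (b) is the very first reduction: the paper immediately bounds $\norm{(f\tilde{\otimes}f)\1_{\Delta_{2m}^c}}_{\ell^2(\N^{2m})}\leq\norm{(f\otimes f)\1_{\Delta_{2m}^c}}_{\ell^2(\N^{2m})}$ (symmetrization is an average, then the triangle inequality in $\ell^2$), so that every remaining summand is $f^2(\bm{i}_m)f^2(\bm{j}_m)\geq0$ and the stratification by coincidence patterns is a genuine decomposition into non-negative pieces. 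If instead you expand the symmetrized square as a double sum over pairs of $m$-subsets $(A,B)$ of $[2m]$ restricted to $\Delta_{2m}^c$, the cross terms with $B\notin\{A,A^c\}$ are not sign-definite and your bookkeeping no longer closes; so do the reduction to $f\otimes f$ first. For part (a) you differ from the paper, which simply cites identity (5.2.12) of the Nourdin--Peccati monograph: your subset expansion of $f\tilde{\otimes}f$, with pairs $(A,B)$ parameterized by $r=\abs{A\cap B}$ and each stratum identified as $\norm{f\otimes_{m-r}f}_{\ell^2}^2\geq0$ (Fubini is justified since $\sum_i\abs{f(i_A)f(i_{A^c})f(i_B)f(i_{B^c})}\leq\norm{f}_{\ell^2(\N^m)}^4$), is exactly the standard derivation of that identity, and the extreme strata $r\in\{0,m\}$ do produce the main term $2(m!\norm{f}_{\ell^2(\N^m)}^2)^2$. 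Note only that this yields $D_m(f)\geq0$ rather than $D_m(f)>0$ (for $m=1$ the residual sum is empty and $D_1(f)=0$); since the lemma is only ever used downstream through the inequality $(2m)!\norm{f\tilde{\otimes}f}_{\ell^2(\N^{2m})}^2\geq2(m!\norm{f}_{\ell^2(\N^m)}^2)^2$, this is harmless, but you should state non-negativity rather than strict positivity.
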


\begin{proof}
For a proof of part (a) see e.g.\ identity (5.2.12) in the book \cite{NouPecbook}. Turning to part (b), for every $n,m \in \N$, we use the following abbreviation for tuples of indices: $\bm{i}_n := (i_1, \dotsc, i_n) \in \N^n, \bm{j}_m := (j_1, \dotsc, j_m) \in \N^m$ and $(\bm{i}_n, \bm{j}_m) := (i_1, \dotsc, i_n, j_1, \dotsc, j_m) \in \N^{n+m}$. Then,
\begin{align}
&\norm{(f \tilde{\otimes} f) \1_{\Delta_{2m}^c}}_{\ell^2(\N^{2m})}^2 \leq \norm{(f \otimes f) \1_{\Delta_{2m}^c}}_{\ell^2(\N^{2m})}^2\notag\\
&\qquad = \sum_{(\bm{i}_m, \bm{j}_m) \in \Delta_{2m}^c} f^2(\bm{i}_m) f^2(\bm{j}_m)  = \sum_{(\bm{i}_m, \bm{j}_m) \in \Delta_{2m}^c: \atop \bm{i}_m, \bm{j}_m \in \Delta_m} f^2(\bm{i}_m) f^2(\bm{j}_m) \label{Influence bound proof 1}\,,
\end{align}
where, in the last step, we used the fact that $f$ vanishes on diagonals. We will now count the number of pairs of equal indices in a fixed tuple $(\bm{i}_m, \bm{j}_m) \in \Delta_{2m}^c$ with $\bm{i}_m, \bm{j}_m \in \Delta_m$. Since $\bm{i}_m, \bm{j}_m \in \Delta_m$, each possible pair can only consist of one index taken from the tuple $\bm{i}_m$ and one index taken from tuple $\bm{j}_m$. Thus, each tuple $(\bm{i}_m, \bm{j}_m) \in \Delta_{2m}^c$ with $\bm{i}_m, \bm{j}_m \in \Delta_m$ can contain $r=1, \dotsc, m$ pairs. Now, there are $r! \binom{m}{r}^2$ different ways to build $r$ pairs of two indices in the way described above. By the symmetry of the summands in \eqref{Influence bound proof 1} with respect to the tuples $\bm{i}_m$ and $\bm{j}_m$, respectively, the sum on the right-hand side of \eqref{Influence bound proof 1} can be rewritten in terms of summands containing exactly $r$ pairs of random variables and it follows that
\begin{align}
&\norm{(f \tilde{\otimes} f) \1_{\Delta_{2m}^c}}_{\ell^2(\N^{2m})}^2 \leq \sum_{r=1}^m r! \binom{m}{r}^2 \sum_{(\bm{i}_{m-r}, \bm{j}_{m-r}, \bm{k}_r) \in \Delta_{2m-r}} f^2(\bm{i}_{m-r}, \bm{k}_r) f^2(\bm{j}_{m-r}, \bm{k}_r)\notag\\
&\qquad \leq \sum_{r=1}^m r! \binom{m}{r}^2 \sum_{(\bm{i}_{m-r}, \bm{j}_{m-r}, \bm{k}_r) \in \N^{2m-r}: \atop (\bm{i}_{m-r}, \bm{k}_r), (\bm{j}_{m-r}, \bm{k}_r) \in \Delta_m} f^2(\bm{i}_{m-r}, \bm{k}_r) f^2(\bm{j}_{m-r}, \bm{k}_r) \notag\\
&\qquad \leq \frac{\gamma_m}{2(2m-1)!} \sum_{(\bm{i}_{m-1}, \bm{j}_{m-1}, k) \in \N^{2m-1}: \atop (\bm{i}_{m-1}, k), (\bm{j}_{m-1}, k) \in \Delta_m} f^2(\bm{i}_{m-1}, k) f^2(\bm{j}_{m-1}, k) \label{Influence bound proof 2}\,.
\end{align}
Again, using the fact that $f$ vanishes on diagonals as well as H\"older's inequality it follows from \eqref{Influence bound proof 2} that
\begin{align*}
&\norm{(f \tilde{\otimes} f) \1_{\Delta_{2m}^c}}_{\ell^2(\N^{2m})}^2 \notag\\
&\qquad \leq \frac{\gamma_m}{2(2m-1)!} \sum_{k=1}^\infty \Big( \sum_{\bm{i}_{m-1} \in \Delta_{m-1}} f^2(\bm{i}_{m-1}, k) \Big) \Big( \sum_{\bm{j}_{m-1} \in \Delta_{m-1}} f^2(\bm{j}_{m-1}, k) \Big)\\
&\qquad \leq \frac{\gamma_m}{2(2m-1)!} \Big( \sum_{(\bm{i}_{m-1}, k) \in \Delta_m} f^2(\bm{i}_{m-1}, k) \Big) \sup_{k \in \N} \Big( \sum_{\bm{j}_{m-1} \in \Delta_{m-1}} f^2(\bm{j}_{m-1}, k) \Big)\\
&\qquad = \frac{\gamma_m}{(2m)!} m!\norm{f}_{\ell^2(\N^{m})}^2 \sup_{k \in \N} \Inf_k(f)\,.
\end{align*}
\end{proof}

\begin{lemma}\label{keylemma}
Let $m\in\N$ and suppose that $F=J_m(f)\in C_m$, where $f\in\ell_0^2(\N)^{\circ m}$, is such that 
$\E[F^4]<\infty$. Then, we have 
\begin{align*}
 \sum_{n=1}^{2m-1}\Var\bigl(\proj{F^2}{n}\bigr)\leq \E\bigl[F^4\bigr]-3\bigl(\E[F^2]\bigr)^2+\E[F^2]\;\gamma_m \sup_{j\in\N}\Inf_j(f)\,,
 \end{align*}
where $\gamma_m$ is a finite constant which only depends on $m$ (see \eqref{gamma_n}).
 \end{lemma}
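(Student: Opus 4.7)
The strategy is to leverage the finite chaos decomposition of $F^2$ provided by Lemma \ref{chaosrank} together with the elementary orthogonality of the Rademacher chaoses, and then to isolate the top chaos component of $F^2$ for which we have an explicit kernel that can be estimated via Lemma \ref{comblemma}.

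First, since $\E[F^4]<\infty$ we have $F^2\in L^2(\P)$, and Lemma \ref{chaosrank}(a) yields
\[
F^2 \;=\; \sum_{r=0}^{2m} \proj{F^2}{r} \;=\; \sum_{r=0}^{2m} J_r(h_r),
\]
with $h_r\in\ell_0^2(\N)^{\circ r}$. The zero-chaos piece is just $\proj{F^2}{0}=\E[F^2]$, and by the isometry \eqref{iso} the chaoses are mutually orthogonal and each $\proj{F^2}{r}$ with $r\geq 1$ is centered, so $\E[\proj{F^2}{r}^2]=\Var(\proj{F^2}{r})$. I would then write
\[
\E[F^4] \;=\; \sum_{r=0}^{2m} \E\bigl[\proj{F^2}{r}^2\bigr] \;=\; (\E[F^2])^2 + \sum_{r=1}^{2m} \Var(\proj{F^2}{r}),
\]
which rearranges to
\[
\sum_{r=1}^{2m-1} \Var(\proj{F^2}{r}) \;=\; \E[F^4] - (\E[F^2])^2 - \Var(\proj{F^2}{2m}).
\]
Thus the task reduces to bounding $\Var(\proj{F^2}{2m})$ from below by $2(\E[F^2])^2$ up to a controlled error in terms of the maximal influence.

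For this lower bound, Lemma \ref{chaosrank}(b) identifies the top kernel as $h_{2m}=f\tilde\otimes f\,\mathds{1}_{\Delta_{2m}}$, so by \eqref{iso}
\[
\Var(\proj{F^2}{2m}) \;=\; (2m)!\,\bigl\Vert f\tilde\otimes f\,\mathds{1}_{\Delta_{2m}}\bigr\Vert^2_{\ell^2(\N^{2m})} \;=\; (2m)!\,\Vert f\tilde\otimes f\Vert^2 - (2m)!\,\bigl\Vert f\tilde\otimes f\,\mathds{1}_{\Delta_{2m}^c}\bigr\Vert^2.
\]
By Lemma \ref{comblemma}(a), $(2m)!\,\Vert f\tilde\otimes f\Vert^2 = 2(m!\Vert f\Vert^2)^2+D_m(f)\geq 2(\E[F^2])^2$, since $D_m(f)>0$ and $\E[F^2]=m!\Vert f\Vert^2$. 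By Lemma \ref{comblemma}(b), the subtracted off-diagonal part is at most $\gamma_m\,\E[F^2]\,\sup_{j\in\N}\Inf_j(f)$. Combining these two estimates yields
\[
\Var(\proj{F^2}{2m}) \;\geq\; 2(\E[F^2])^2 - \gamma_m\,\E[F^2]\,\sup_{j\in\N}\Inf_j(f).
\]
Substituting this lower bound into the earlier identity gives exactly the claimed inequality. No delicate step is involved beyond correctly invoking Lemma \ref{chaosrank}(b) to pin down the top kernel and Lemma \ref{comblemma}(a),(b) to control its squared $\ell^2$-norm; the principal subtlety is simply to respect the signs, i.e.\ to ensure that $D_m(f)\geq 0$ is discarded in the direction that strengthens the final bound.
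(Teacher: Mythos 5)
Your proposal is correct and follows essentially the same route as the paper: both arguments rest on the finite chaos decomposition of $F^2$ from Lemma \ref{chaosrank}, the identification of the top kernel as $f\tilde{\otimes}f\,\mathds{1}_{\Delta_{2m}}$, the orthogonal splitting $\|f\tilde{\otimes}f\,\mathds{1}_{\Delta_{2m}}\|^2=\|f\tilde{\otimes}f\|^2-\|f\tilde{\otimes}f\,\mathds{1}_{\Delta_{2m}^c}\|^2$, and Lemma \ref{comblemma}(a),(b) with the sign of $D_m(f)>0$ discarded in the favourable direction. The only cosmetic difference is that the paper normalizes $\E[F^2]=1$ while you carry the general variance through, which changes nothing of substance.
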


 \begin{proof}
  From Lemma \ref{chaosrank}, we know that $F^2=J_m(f)^2$ has a chaos decomposition of the form 
\begin{equation}\label{kl1}
 F^2=\sum_{n=0}^{2m}\proj{F^2}{n}=\E[F^2]+\sum_{n=1}^{2m-1}\proj{F^2}{n}+ J_{2m}(g_{2m})
\end{equation}
with  $g_{2m}=f\tilde{\otimes}f\mathds{1}_{\Delta_{2m}}$, thus ensuring that $F^2$ is in the domain of $L$. W.l.o.g. we may assume that $\E[F^2]=1$.
From \eqref{kl1} and \eqref{iso} it thus follows that
\begin{align}\label{kl2}
 &\E\bigl[F^4\bigr]-1=\Var\bigl(F^2\bigr)=\sum_{n=1}^{2m}\Var\bigl(\proj{F^2}{n}\bigr)\notag\\
 &\quad =\sum_{n=1}^{2m-1}\Var\bigl(\proj{F^2}{n}\bigr) +(2m)!\norm{f\tilde{\otimes} f \mathds{1}_{\Delta_{2m}}}_{\ell^2(\N^{2m})}^2\notag\\
 &\quad =\sum_{n=1}^{2m-1}\Var\bigl(\proj{F^2}{n}\bigr) +(2m)!\norm{f\tilde{\otimes} f}_{\ell^2(\N^{2m})}^2
 -(2m)!\norm{f\tilde{\otimes} f\mathds{1}_{\Delta_{2m}^c} }_{\ell^2(\N^{2m})}^2   \,.
\end{align}
{ Now, Lemma \ref{comblemma} (a) implies that there is a constant $D_m(f)\in(0,\infty)$ depending on $f$ and m such that}
\begin{equation}\label{vg3}
 (2m!)\norm{f\tilde{\otimes} f}_{\ell^2(\N^{2m})}^2=2(m!)^2\norm{f}_{\ell^2(\N^{m})}^4 +D_m(f)\,.
\end{equation}
Also, 
\begin{equation*}
 2(m!)^2\norm{f}_{\ell^2(\N^{2m})}^4=2\Bigl(\E\bigl[F^2\bigr]\Bigr)^2=2\,.
\end{equation*}
Hence, from \eqref{kl2} and Lemma \ref{comblemma} (b) we see that 
\begin{align*}
 \sum_{n=1}^{2m-1}\Var\bigl(\proj{F^2}{n}\bigr)&\leq \E\bigl[F^4\bigr]-3+ (2m)!\norm{f\tilde{\otimes} f\mathds{1}_{\Delta_{2m}^c}}_{\ell^2(\N^{2m})}^2\\
 &\leq \E\bigl[F^4\bigr]-3+\gamma_m \sup_{j\in\N}\Inf_j(f)\,.
\end{align*}
\end{proof}

\begin{lemma}\label{vargamma}
 Let $m\in\N$ and consider a random variable $F$ such that $F = J_m(f)\in C_m$ and $\E[F^4]<\infty$. Then, $F, F^2\in\dom(L)$ and
\begin{align}
  &\Var\bigl(m^{-1}\Gamma(F,F)\bigr)=\sum_{n=1}^{2m-1} \Bigl(1-\frac{n}{2m}\Bigr)^2 \Var\bigl(\proj{F^2}{n}\bigr)\notag\\
  &\leq \frac{(2m-1)^2}{4m^2}\Bigl(\E\bigl[F^4\bigr]-3\bigl(\E\bigl[F^2\bigr]\bigr)^2 +\E[F^2]\;\gamma_m \sup_{j\in\N}\Inf_j(f) \Bigr)\,. \label{e:cb1}
\end{align}
 Moreover, one also has that
 \begin{align}
  &\frac{1}{m^2} \E[\Gamma(F,F)^2] \leq \E[F^4]\quad\text{and} \label{e:cb2}  \\ 
 &\frac{1}{m} \E[F^2\Gamma(F,F)] \leq \E[F^4]\,. \label{e:cb3}
\end{align}
\end{lemma}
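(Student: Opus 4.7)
The plan is to compute the chaotic expansion of $\Gamma(F,F)$ explicitly and then use the orthogonality of chaoses. First, membership in $\dom(L)$: $F\in C_m\subseteq\dom(L)$ by definition, while $F^2\in\dom(L)$ because, by Lemma \ref{chaosrank}(a), $F^2$ has a \emph{finite} chaotic decomposition $F^2=\sum_{n=0}^{2m}\proj{F^2}{n}$ and $\calS\subseteq\dom(L)$. In particular, Proposition \ref{gammaprop} applies and $\Gamma(F,F)=\Gamma_0(F,F)$ is well-defined in $L^1(\P)$.

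Next, using the defining identity \eqref{cdcdef} together with $LF=-mF$ (which holds because $F\in C_m=\ker(L+m\Id)$), I would write
\begin{equation*}
\Gamma(F,F)=\tfrac12\bigl(L(F^2)-2FLF\bigr)=\tfrac12 L(F^2)+mF^2.
\end{equation*}
Now I substitute the chaos decomposition of $F^2$ and use $L\proj{F^2}{n}=-n\proj{F^2}{n}$ to obtain
\begin{equation*}
\Gamma(F,F)=-\tfrac12\sum_{n=1}^{2m}n\proj{F^2}{n}+m\sum_{n=0}^{2m}\proj{F^2}{n}=m\,\E[F^2]+\sum_{n=1}^{2m-1}\bigl(m-\tfrac{n}{2}\bigr)\proj{F^2}{n},
\end{equation*}
where the $n=2m$ term has been absorbed because its coefficient $m-m=0$. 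Dividing by $m$ gives the key expansion
\begin{equation*}
\frac{\Gamma(F,F)}{m}=\E[F^2]+\sum_{n=1}^{2m-1}\Bigl(1-\frac{n}{2m}\Bigr)\proj{F^2}{n}.
\end{equation*}

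Taking the variance of this expression and invoking the $L^2(\P)$-orthogonality of the chaoses $C_n$ yields the claimed identity in \eqref{e:cb1}. The estimate then follows from the uniform bound $(1-n/(2m))^2\le((2m-1)/(2m))^2$ valid for $n\in\{1,\dots,2m-1\}$, combined with Lemma \ref{keylemma} applied to $\sum_{n=1}^{2m-1}\Var(\proj{F^2}{n})$.

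For \eqref{e:cb2} and \eqref{e:cb3}, the same chaos expansion does the job. Using orthogonality,
\begin{equation*}
\E\Bigl[\bigl(m^{-1}\Gamma(F,F)\bigr)^2\Bigr]=(\E[F^2])^2+\sum_{n=1}^{2m-1}\Bigl(1-\tfrac{n}{2m}\Bigr)^2\Var(\proj{F^2}{n})\le(\E[F^2])^2+\sum_{n=1}^{2m}\Var(\proj{F^2}{n})=\E[F^4],
\end{equation*}
since $(1-n/(2m))^2\le1$. Similarly, since $F^2-\E[F^2]=\sum_{n=1}^{2m}\proj{F^2}{n}$, orthogonality gives
\begin{equation*}
\E\bigl[F^2\cdot m^{-1}\Gamma(F,F)\bigr]=(\E[F^2])^2+\sum_{n=1}^{2m-1}\Bigl(1-\tfrac{n}{2m}\Bigr)\Var(\proj{F^2}{n})\le(\E[F^2])^2+\sum_{n=1}^{2m}\Var(\proj{F^2}{n})=\E[F^4],
\end{equation*}
using $0\le1-n/(2m)\le1$. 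There is no real obstacle here; the work has already been done upstream, in Lemma \ref{chaosrank} (guaranteeing the finiteness of the expansion of $F^2$) and in Lemma \ref{keylemma} (controlling $\sum_n \Var(\proj{F^2}{n})$ in terms of the fourth cumulant plus the influence term). The only mildly delicate point is the cancellation of the top-order ($n=2m$) chaos in the expansion of $\Gamma(F,F)$, which is precisely the mechanism that makes the factor $(1-n/(2m))$ appear and ultimately produces the prefactor $(2m-1)^2/(4m^2)$ rather than something of order $1$.
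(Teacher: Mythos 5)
Your proof is correct and follows essentially the same route as the paper: compute the chaotic expansion of $2\Gamma(F,F)=LF^2-2FLF=\sum_{n=0}^{2m-1}(2m-n)\proj{F^2}{n}$ (with the top-order term cancelling), then use orthogonality of the chaoses, the coefficient bound $(1-n/(2m))^2\leq(2m-1)^2/(4m^2)$, and Lemma \ref{keylemma}. The only differences are cosmetic: the paper normalizes $\E[F^2]=1$ at the outset, and it leaves the verifications of \eqref{e:cb2} and \eqref{e:cb3} implicit where you spell them out.
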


\begin{proof} From  \eqref{kl2} we see that $F^2$ is in the domain of $L$. By homogeneity, without loss of generality we can assume for the rest of the proof that $\E[F^2]=1$. As $LF=-mF$, by the definitions of $\Gamma$ and $L$ we have 
\begin{align}\label{vg1}
 2\Gamma(F,F)&= LF^2-2FLF=\sum_{n=1}^{2m} -n \proj{F^2}{n} +2m \sum_{n=0}^{2m} \proj{F^2}{n}\notag\\
&=\sum_{n=0}^{2m} (2m-n)\proj{F^2}{n}=\sum_{n=0}^{2m-1} (2m-n)\proj{F^2}{n}\,.
\end{align}
By orthogonality, one has that
\begin{align}\label{vg4}
 \Var\bigl(m^{-1}\Gamma(F,F)\bigr)&=\frac{1}{4m^2}\sum_{n=1}^{2m-1}(2m-n)^2\Var\bigl(\proj{F^2}{n}\bigr)\notag\\
 &=\sum_{n=1}^{2m-1}\Bigl(1-\frac{n}{2m}\Bigr)^2\Var\bigl(\proj{F^2}{n}\bigr),
\end{align}
proving the equality in \eqref{e:cb1}. The inequality now follows from 
\begin{equation}\label{coffbound}
 \Bigl(1-\frac{n}{2m}\Bigr)^2\leq \Bigl(1-\frac{1}{2m}\Bigr)^2=\frac{(2m-1)^2}{4m^2}\,,\quad n=1,\dotsc,2m\,,
\end{equation}
as well as from Lemma \ref{keylemma}. Relation \eqref{e:cb2} is an immediate consequences of \eqref{vg4}, \eqref{coffbound} and \eqref{kl2}, and \eqref{e:cb3} follows similarly from \eqref{kl1} and \eqref{vg1} using orthogonality.
\end{proof}

\begin{lemma}\label{remlemma}
 Let $m\in\N$ and let $F=J_m(f)\in L^4(\P)$ be an element of $C_m$. Then, we have 
 \begin{align}\label{remlemmainequality}
  &\frac{1}{2m}\sum_{k=1}^\infty \frac{1}{p_k q_k}\E\babs{D_kF}^4 \notag\\
  &\leq\frac{4m-3}{2m}\Bigl(\E\bigl[F^4\bigr]-3\bigl(\E\bigl[F^2\bigr]\bigr)^2\Bigr)+\frac{6m-3}{2m}\E[F^2]\,\gamma_m \sup_{j\in\N}\Inf_j(f)\,.
 \end{align}
\end{lemma}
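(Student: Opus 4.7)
The plan is to derive an identity expressing $\sum_k \E[(D_kF)^4]/(p_kq_k)$ through the Dirichlet form of $F^2$, and then control each piece using the chaos decomposition of $F^2$, Lemma \ref{keylemma} for the lower chaos, and Lemma \ref{comblemma}(b) for the top chaos.

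The starting point is the $\F_{-k}$-measurable identity $D_kF^2 = 2\hat F_kD_kF + \frac{q_k-p_k}{\sqrt{p_kq_k}}(D_kF)^2$, which follows from the product rule \eqref{prodD} together with the decomposition $F = \hat F_k + Y_kD_kF$ where $\hat F_k := \E[F\,|\,\F_{-k}] = p_kF_k^+ + q_kF_k^-$. Squaring this identity, using the algebraic fact $(q_k-p_k)^2/(p_kq_k) = 1/(p_kq_k) - 4$, summing over $k$, and taking expectations yields
\begin{align*}
\sum_k \frac{\E[(D_kF)^4]}{p_kq_k} =\;& \E\Big[\sum_k(D_kF^2)^2\Big] + 4\sum_k\E[(D_kF)^4] - 4\sum_k\E[\hat F_k^2(D_kF)^2]\\
&- 4\sum_k\frac{q_k-p_k}{\sqrt{p_kq_k}}\E[\hat F_k(D_kF)^3].
\end{align*}
Proposition \ref{intparts} applied to $F^2\in\dom(L)$ gives $\E[\sum_k(D_kF^2)^2] = \sum_{n=1}^{2m} n\cdot n!\|h_n\|^2$, where $F^2 = \sum_{n=0}^{2m}J_n(h_n)$ with top kernel $h_{2m} = (f\tilde\otimes f)\mathds{1}_{\Delta_{2m}}$.

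The chaos sum is split by order: the lower-chaos part obeys $\sum_{n=1}^{2m-1}n\cdot n!\|h_n\|^2 \leq (2m-1)\sum_{n=1}^{2m-1}\Var(\proj{F^2}{n})$; the top-chaos term $2m(2m)!\|h_{2m}\|^2$ is decomposed via Lemma \ref{comblemma}(a)--(b) and the identity \eqref{kl2} into a contribution absorbed by the lower-chaos variance sum plus an influence contribution bounded by $2m\gamma_m\E[F^2]\sup_j\Inf_j(f)$. The non-symmetric cross term is controlled by Cauchy--Schwarz: the bound $|\E[\hat F_k(D_kF)^3]| \leq (\E[\hat F_k^2(D_kF)^2]\,\E[(D_kF)^4])^{1/2}$ combined with $(q_k-p_k)^2/(p_kq_k) \leq 1/(p_kq_k)$, followed by a Cauchy--Schwarz across $k$ and Young's inequality with a suitable parameter, allows a controlled fraction of $\sum_k\E[(D_kF)^4]/(p_kq_k)$ to be absorbed back onto the LHS. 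The residual auxiliary sums $\sum_k\E[\hat F_k^2(D_kF)^2]$ and $\sum_k\E[(D_kF)^4]$ are then dominated using the identity $\E[F^2(D_kF)^2] = \E[\hat F_k^2(D_kF)^2] + \E[(D_kF)^4]$ (obtained from expanding $F^2=(\hat F_k+Y_kD_kF)^2$ and using $\E[Y_k]=0,\,\E[Y_k^2]=1$), together with $\E[F^2\Gamma(F,F)] \leq m\E[F^4]$ from \eqref{e:cb3} and $\E[\Gamma(F,F)^2]\leq m^2\E[F^4]$ from \eqref{e:cb2}.

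Finally, applying Lemma \ref{keylemma} to rewrite $\sum_{n=1}^{2m-1}\Var(\proj{F^2}{n})$ as $\E[F^4]-3(\E[F^2])^2 + \E[F^2]\gamma_m\sup_j\Inf_j(f)$, followed by careful tracking of constants, produces the claimed inequality \eqref{remlemmainequality} with coefficients $(4m-3)/(2m)$ and $(6m-3)/(2m)$. The hard part will be the constant-matching in the Young-inequality absorption step: the free parameter must be tuned so that after collecting contributions from the $(2m-1)$ factor in the lower-chaos bound, the $2m$ factor from the top chaos, and the residues from the cross term, the coefficient on $\E[F^4]-3(\E[F^2])^2$ comes out to exactly $(4m-3)/(2m)$ and the influence coefficient to exactly $(6m-3)/(2m)$. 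This mirrors the Poisson analogue (Lemma~2.10 of \cite{DP17}), with the additional technical wrinkle that the potentially unbounded non-symmetry factor $(q_k-p_k)/\sqrt{p_kq_k}$ must be absorbed precisely into the $1/\sqrt{p_kq_k}$ scaling implicit on the LHS.
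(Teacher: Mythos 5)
There is a genuine gap, and it sits at the very first step. Your starting identity $D_kF^2=2\hat F_kD_kF+\frac{q_k-p_k}{\sqrt{p_kq_k}}(D_kF)^2$ is correct, but after squaring and using $\frac{(q_k-p_k)^2}{p_kq_k}=\frac{1}{p_kq_k}-4$, the relation you obtain only determines the combination $\sum_k\bigl(\frac{1}{p_kq_k}-4\bigr)\E[(D_kF)^4]=\sum_k\frac{(q_k-p_k)^2}{p_kq_k}\E[(D_kF)^4]$, not the target $\sum_k\frac{1}{p_kq_k}\E[(D_kF)^4]$. In the symmetric case $p_k=q_k=1/2$ one has $D_kF^2=2\hat F_kD_kF$ exactly, so $(D_kF^2)^2=4\hat F_k^2(D_kF)^2$, the cross term vanishes, and your identity collapses to $0=0$: it carries no information whatsoever about $\sum_k\frac{1}{p_kq_k}\E|D_kF|^4=4\sum_k\E|D_kF|^4$, which is a strictly positive quantity that the lemma must control. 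Equivalently, the ``$+4\sum_k\E[(D_kF)^4]$'' term on the right of your identity satisfies $4\E[(D_kF)^4]\leq\frac{1}{p_kq_k}\E[(D_kF)^4]$ with equality at $p_k=1/2$, so any attempt to absorb it back onto the left-hand side eats the entire left-hand side. No choice of Young parameter can rescue this; the obstruction is structural, not a matter of constant tracking. The quantity $\sum_k(D_kF^2)^2$ (the Dirichlet form of $F^2$) simply does not see the quartic differences in the symmetric case.

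The paper's proof avoids this by integrating by parts against $F^3$ rather than working with $F^2$ alone: from $\E[F^4]=-\frac1m\E[F^3LF]=\frac1m\E[\Gamma_0(F,F^3)]$ and the exact cubic expansion $D_k^{\pm}F^3=(D_k^{\pm}F)^3+3F^2D_k^{\pm}F+3F(D_k^{\pm}F)^2$ of Lemma \ref{Difflemma}(a), the quartic terms survive with nonvanishing weights $p_k$, $q_k$ and, after averaging over $X_k$ (independent of $(F_k^+,F_k^-)$), combine to exactly $-\frac{1}{p_kq_k}\E|D_kF|^4$. This yields the clean identity $\frac{1}{2m}\sum_k\frac{1}{p_kq_k}\E|D_kF|^4=\frac{3}{m}\E[F^2\Gamma(F,F)]-\E[F^4]$ with no lossy absorption, and the constants $\frac{4m-3}{2m}$ and $\frac{6m-3}{2m}$ then drop out of \eqref{kl1}, \eqref{vg1}, the bound $1-\frac{n}{2m}\leq\frac{2m-1}{2m}$ and Lemma \ref{keylemma}. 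Two further points you would also need: the manipulations require $F^3\in\dom(L)$, which the paper secures by first assuming $F\in L^8(\P)$ and then removing this via the martingale truncation $f^{(n)}=f\mathds{1}_{[n]^m}$, Lemma \ref{l4conv} and Fatou's lemma; and your claim that Cauchy--Schwarz plus Young with a tuned parameter reproduces the \emph{exact} stated coefficients is not credible, since those coefficients come from an identity with exact cancellation of the cross terms you propose to estimate in absolute value.
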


\begin{proof}
In order to justify the integration by parts in \eqref{rl1} below we first assume that the stronger integrability condition $F\in L^8(\P)$ holds. Then, by applying Lemma \ref{chaosrank} (a) twice it follows that $F^3\in\mathcal{S}\subseteq\dom(L)$. 
Of course, also $F\in\mathcal{S}\subseteq\dom(L)$ and $F=LL^{-1}F=-m^{-1}LF$. 
Hence, according to Proposition \ref{gammaprop} we can write $\Gamma$ and $\Gamma_0$ interchangeably, and by Proposition \ref{intparts} we have
 \begin{align}\label{rl1}
  \E\bigl[F^4\bigr]&=\E\bigl[F^3 F\bigr]=-\frac{1}{m}\E\bigl[F^3 LF\bigr]=\frac1m \E\bigl[\Gamma_0(F,F^3)\bigr]\,.
 \end{align}
By Lemma \ref{Difflemma} (a) we can write 
\begin{align}\label{rl2}
 \Gamma_0(F,F^3)&=\frac12\sum_{k=1}^\infty\Bigl(q_k D_k^-F D_k^-F^3 +p_k D_k^+F D_k^+F^3\Bigr)\notag\\
 &=\frac12\sum_{k=1}^\infty q_k D_k^-F\Bigl((D_k^-F)^3+3F^2D_k^-F+3F(D_k^-F)^2\Bigr)\notag\\
 &\;+\frac12\sum_{k=1}^\infty p_k D_k^+F\Bigl((D_k^+F)^3+3F^2D_k^+F+3F(D_k^+F)^2\Bigr)\notag\\
 &=\frac12\sum_{k=1}^\infty q_k\Bigl((D_k^-F)^4+3F^2(D_k^-F)^2+3F(D_k^-F)^3\Bigr)\notag\\
 &\;+\frac12\sum_{k=1}^\infty p_k \Bigl((D_k^+F)^4+3F^2(D_k^+F)^2+3F(D_k^+F)^3\Bigr)\,.
\end{align}
Furthermore, 
\begin{align}\label{rl3}
 3F^2\Gamma_0(F,F)&=\frac12\sum_{k=1}^\infty q_k 3F^2(D_k^-F)^2+\frac12\sum_{k=1}^\infty p_k 3F^2(D_k^+F)^2\,.
\end{align}
{ Hence, from \eqref{rl1}, \eqref{rl2} and \eqref{rl3} we obtain
\begin{align}\label{rl4}
 &\frac{3}{m}\E\bigl[F^2\Gamma(F,F)\bigr]-\E\bigl[F^4\bigr]\notag\\
 &\quad =-\frac{1}{2m}\sum_{k=1}^\infty \E\Bigl[q_k\bigl((D_k^-F)^4+3F(D_k^-F)^3\bigr) + p_k\bigl((D_k^+F)^4+3F(D_k^+F)^3\bigr)\Bigr]\,.
\end{align}}
Now, for fixed $k\in\N$, by distinguishing the cases $X_k=+1$ and $X_k=-1$ we obtain 
\begin{align*}
 &q_k\bigl((D_k^-F)^4+3F(D_k^-F)^3\bigr)+p_k\bigl((D_k^+F)^4+3F(D_k^+F)^3\bigr)\\
 &=q_k\bigl((F_k^+-F_k^-)^4-3F_k^+(F_k^+-F_k^-)^3\bigr)\mathds{1}_{\{X_k=+1\}}\\
 &\;+p_k\bigl((F_k^+-F_k^-)^4+3F_k^-(F_k^+-F_k^-)^3\bigr)\mathds{1}_{\{X_k=-1\}}\,.
\end{align*}
Using the fact that $X_k$ is independent of $(F_k^+, F_k^-)$, taking expectations yields
\begin{align}\label{rl5}
 &\E\Bigl[q_k\bigl((D_k^-F)^4+3F(D_k^-F)^3\bigr)+p_k\bigl((D_k^+F)^4+3F(D_k^+F)^3\bigr)\Bigr]\notag\\
 &=2p_kq_k\E\babs{F_k^+-F_k^-}^4-3p_kq_k\E\babs{F_k^+-F_k^-}^4\notag\\
 &=-p_kq_k\E\babs{F_k^+-F_k^-}^4=-\frac{1}{p_k q_k}\E\babs{D_kF}^4\,.
\end{align}
Hence, from \eqref{rl4} and \eqref{rl5} we obtain
\begin{align*}
 \frac{1}{2m}\sum_{k=1}^\infty \frac{1}{p_k q_k}\E\babs{D_kF}^4&=\frac{3}{m}\E\bigl[F^2\Gamma_0(F,F)\bigr]-\E\bigl[F^4\bigr]\\
 &=\frac{3}{m}\E\bigl[F^2\Gamma(F,F)\bigr]-\E\bigl[F^4\bigr]\,.
\end{align*}
Now, using \eqref{kl1}, \eqref{vg1} and orthogonality yields
\begin{align*}
 \frac{3}{m}\E\bigl[F^2\Gamma(F,F)\bigr]-\E\bigl[F^4\bigr]&=3\bigl(\E[F^2]\bigr)^2-\E\bigl[F^4\bigr]\notag\\
 &\;+3\sum_{n=1}^{2m-1}\Bigl(1-\frac{n}{2m}\Bigr)\Var\bigl(\proj{F^2}{n}\bigr)
\end{align*}
and, using Lemma \ref{keylemma}, we obtain
\begin{align}\label{rl6}
 &\frac{3}{m}\E\bigl[F^2\Gamma(F,F)\bigr]-\E\bigl[F^4\bigr]\leq 3\bigl(\E[F^2]\bigr)^2-\E\bigl[F^4\bigr]\notag\\
&\;+ 3 \frac{2m-1}{2m}\Bigl(\E\bigl[F^4\bigr]-3\bigl(\E[F^2]\bigr)^2+\E[F^2]\,\gamma_m \sup_{j\in\N}\Inf_j(f)\Bigr)\notag\\
&\leq\frac{4m-1}{2m}\Bigl(\E\bigl[F^4\bigr]-3\bigl(\E[F^2]\bigr)^2\Bigr)+\frac{6m-3}{2m}\E[F^2]\,\gamma_m \sup_{j\in\N}\Inf_j(f)\,.
\end{align}

Altogether, for $F\in L^8(\P)$,  we have thus proved that 
\begin{align*}
  &\frac{1}{2m}\sum_{k=1}^\infty \frac{1}{p_k q_k}\E\babs{D_kF}^4=\frac{3}{m}\E\bigl[F^2\Gamma(F,F)\bigr]-\E\bigl[F^4\bigr]\\
  &\leq\frac{4m-3}{2m}\Bigl(\E\bigl[F^4\bigr]-3\bigl(\E\bigl[F^2\bigr]\bigr)^2\Bigr)+\frac{6m-3}{2m}\E[F^2]\,\gamma_m \sup_{j\in\N}\Inf_j(f)\,.
 \end{align*}

In the general case that $F=J_m(f)\in L^4(\P)$ we use an approximation argument: For every $n\in\N$, let $F_n:=J_m(f^{(n)})$, where we recall the definition of $f^{(n)}$ from \eqref{martdef}. Note that, for every $n\in\N$ and $p\in[1,\infty)\cup\{+\infty\}$, $F_n\in L^p(\P)$. Thus, \eqref{remlemmainequality} holds for $F_n$, for every $n \in \N$. Now, recall that $F_n=\E[ F \, | \, \F_n]$, for every $n\in\N$. In addition, for every $k,n\in\N$, we have 
\[D_kF_n=mJ_{m-1}(f^{(n)}(k,\cdot))=\E\bigl[D_kF\,\bigl|\,\F_n\bigr]\,.\]
Hence, by Lemma \ref{l4conv} we conclude that, as $n\to\infty$, $F_n\rightarrow F$ and, for every $k\in\N$, $D_kF_n\rightarrow D_kF$ both $\P$-a.s.\ and in $L^4(\P)$. This implies firstly that the right hand side of \eqref{remlemmainequality} for $F_n$ converges to the same quantity for $F$ since, by monotone convergence, we also have $\lim_{n\to\infty}\Inf_j(f^{(n)})=\Inf_j(f)$. On the other hand, by using Fatou's lemma for sums, we obtain
\begin{align*}
 \frac{1}{2m}\sum_{k=1}^\infty \frac{1}{p_k q_k}\E\babs{D_kF}^4&=\frac{1}{2m}\sum_{k=1}^\infty \frac{1}{p_k q_k}\lim_{n\to\infty}\E\babs{D_kF_n}^4\\
& \leq\liminf_{n\to\infty}\frac{1}{2m}\sum_{k=1}^\infty \frac{1}{p_k q_k}\E\babs{D_kF_n}^4\,.
\end{align*}
Therefore, \eqref{remlemmainequality} continues to hold for $F\in L^4(\P)$.

\end{proof}

\begin{lemma}\label{KolmogorovLemma}
Let $m\in\N$ and let $F=J_m(f)\in L^4(\P)$ be an element of $C_m$. Then,
\begin{align*}
0 &\leq \frac{1}{m} \sup_{x \in \R} \E \Big[ \Big\langle \frac{1}{\sqrt{pq}} DF\absolute{DF}, D\1_{\{F>x\}} \Big\rangle_{\ell^2(\N)} \Big] \notag\\
&\qquad \leq \sqrt{(4m-3) \Big( \E[F^4]-3(\Var(F))^2 \Big) + (6m-3) \gamma_m \Var(F) \sup_{j\in\N}\Inf_j(f)} \notag\\
&\phantom{\qquad {}\leq{}} \times \frac{\sqrt{8m^2-7}}{m}\,.
\end{align*}
\end{lemma}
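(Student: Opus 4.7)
My plan is to combine integration by parts (adjointness of $\delta$ and $D$) with the Skorohod isometry to handle the upper bound, and to use a pointwise sign analysis for the lower bound.

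First, using that $y\mapsto\1_{\{y>x\}}$ is nondecreasing, both $D_kF$ and $D_k\1_{\{F>x\}}$ share the sign of $F_k^+-F_k^-$. A brief case analysis then gives the pointwise identity
\[
\frac{1}{\sqrt{p_kq_k}}\,D_kF\,|D_kF|\,D_k\1_{\{F>x\}} \;=\; (D_kF)^2\bigl(\1_{\{F_k^+>x\}}-\1_{\{F_k^->x\}}\bigr)^2 \;\geq\; 0,
\]
which, after summation over $k$ and taking expectation, yields the lower bound $0$ in the statement.

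For the upper bound I would set $u_k := D_kF\,|D_kF|/\sqrt{p_kq_k}$, noting that $u\in L^2(\P\otimes\kappa)$ (since $F\in L^4(\P)\cap C_m$ and $D_kF\in C_{m-1}$), and that $u\in\dom(\delta)$ may be checked by an approximation argument analogous to the one at the end of the proof of Lemma~\ref{remlemma}. The adjointness relation \eqref{intpartsdelta} then yields $\E\langle u,D\1_{\{F>x\}}\rangle_{\ell^2(\N)}=\E[\delta(u)\,\1_{\{F>x\}}]$, and since $\E\delta(u)=0$ this equals the covariance $\Cov(\delta(u),\1_{\{F>x\}})$. Applying the Cauchy--Schwarz inequality together with $\Var(\1_{\{F>x\}})\leq\tfrac{1}{4}$ yields the crucial estimate
\[
\bigl|\E\langle u,D\1_{\{F>x\}}\rangle\bigr| \;\leq\; \tfrac{1}{2}\sqrt{\E\delta(u)^2}.
\]

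The main obstacle is to establish the bound $\E\delta(u)^2 \leq 4(8m^2-7)\,\E\|u\|_{\ell^2(\N)}^2$. To this end I would invoke the Skorohod isometry \eqref{Skorohod isometry}, which reduces the problem to estimating the cross terms $\E\sum_{k\neq\ell}(D_ku_\ell)(D_\ell u_k)-\E\sum_k(D_ku_k)^2$. The hard bookkeeping consists in expanding $D_\ell u_k = \tfrac{1}{\sqrt{p_kq_k}}\,D_\ell(D_kF\cdot|D_kF|)$ via the product rule \eqref{prodD} and the pathwise Taylor expansion of Lemma~\ref{Difflemma}(b) applied to the map $y\mapsto y|y|$ (whose derivative $2|y|$ is Lipschitz with constant $2$). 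Using that $D_kF\in C_{m-1}$, so that the products $(D_kF)(D_\ell F)$ and $(D_kF)^2$ lie in chaos of order at most $2m-2$, the resulting terms can be controlled using the structural moment inequalities \eqref{e:cb2}--\eqref{e:cb3} of Lemma~\ref{vargamma} together with Lemma~\ref{remlemma}. Combining $\E\delta(u)^2\leq 4(8m^2-7)\,\E\|u\|^2$ with the identity $\E\|u\|^2=\sum_k\E|D_kF|^4/p_kq_k$ and Lemma~\ref{remlemma} then yields the desired prefactor $\tfrac{\sqrt{8m^2-7}}{m}$ multiplying $\sqrt{(4m-3)(\E[F^4]-3\Var(F)^2)+(6m-3)\gamma_m\Var(F)\sup_j\Inf_j(f)}$.
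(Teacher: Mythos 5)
Your proposal follows essentially the same route as the paper's proof: the same sign analysis for the lower bound, the same choice $u_k=(p_kq_k)^{-1/2}D_kF\absolute{D_kF}$, integration by parts to pass to $\E[\delta(u)\1_{\{F>x\}}]$, the Skorohod isometry \eqref{Skorohod isometry}, and the key estimate $\E\sum_{k,\ell}(D_ku_\ell)^2\leq 8(m^2-1)\,\E[\norm{(pq)^{-1/4}DF}_{\ell^4(\N)}^4]$ obtained from the product rule and the carr\'e du champ bounds \eqref{e:cb2}--\eqref{e:cb3} applied to $D_\ell F\in C_{m-1}$, followed by Lemma \ref{remlemma}. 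Your only deviation is the covariance step using $\Var(\1_{\{F>x\}})\leq\frac14$, which gains a harmless factor $\frac12$ over the paper's cruder chain $\E[\delta(u)\1_{\{F>x\}}]\leq\E\absolute{\delta(u)}\leq\sqrt{\E[\delta(u)^2]}$; the ``hard bookkeeping'' you defer is precisely what the paper executes to arrive at the constant $8m^2-7=1+8(m^2-1)$.
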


\begin{proof}
The first inequality readily follows from the fact that $(D_kF)(D_k\1_{\{F>x\}})=p_kq_k(F_k^+-F_k^-)(\1_{\{F_k^+>x\}}-\1_{\{F_k^->x\}})\geq0$, for every $k \in \N$. Turning to the second inequality, we want to apply the integration by parts formula from Proposition 2.2 in \cite{KRT2} to further compute the quantity $\E[\langle (pq)^{-1/2} DF\absolute{DF}, D\1_{\{F>x\}} \rangle_{\ell^2(\N)}]$. Therefore, we have to check if the conditions of Proposition 2.2 in \cite{KRT2} are fulfilled for the sequence $u := (u_k)_{k \in \N}$ with $u_k := (p_kq_k)^{-1/2} D_kF\absolute{D_kF}$, for every $k \in \N$. First off, for every $k \in \N$, $(D_k\1_{\{F>x\}})u_k\geq0$, since $(D_kF)(D_k\1_{\{F>x\}})\geq0$. Furthermore, condition (2.14) from \cite{KRT2} can be validated as follows: By the reverse triangle inequality we have $\absolute{D_k\absolute{D_\ell F}} \leq \absolute{D_kD_\ell F}$, for every $k,\ell \in \N$. Hence, by the product formula in \eqref{prodD} and by H\"older's inequality we get, for every $k,\ell \in \N$,
\begin{align*}
&\E[(D_k(D_\ell F \absolute{D_\ell F}))^2] \notag\\
&\qquad = \E \Big[ \Big( (D_\ell F) (D_k\absolute{D_\ell F}) + (D_kD_\ell F)\absolute{D_\ell F} - \frac{X_k}{\sqrt{p_kq_k}}(D_kD_\ell F)(D_k\absolute{D_\ell F}) \Big)^2 \Big] \notag\\
&\qquad \leq \E \Big[ \Big( 2\absolute{D_\ell F}\absolute{D_k D_\ell F} + \frac{1}{\sqrt{p_kq_k}}(D_k D_\ell F)^2 \Big)^2 \Big] \notag\\
&\qquad \leq 8\E[(D_\ell F)^2(D_k D_\ell F)^2] + \frac{2}{p_kq_k}\E[(D_k D_\ell F)^4]\,.
\end{align*}
Thus,
\begin{align}\label{Kolmogorov lemma proof bound 3}
&\E \Big[ \sum_{k,\ell=1}^\infty (D_k u_\ell)^2 \Big] = \E \Big[ \sum_{k,\ell=1}^\infty \Big( D_k \Big( \frac{1}{\sqrt{p_\ell q_\ell}} D_\ell F \absolute{D_\ell F} \Big) \Big)^2 \Big] \notag\\
&\qquad \leq 8\E \Big[ \sum_{\ell=1}^\infty \frac{1}{p_\ell q_\ell}(D_\ell F)^2 \sum_{k=1}^\infty (D_k D_\ell F)^2 \Big]  + 2\E \Big[ \sum_{\ell=1}^\infty \frac{1}{p_\ell q_\ell} \sum_{k=1}^\infty \frac{1}{p_kq_k} (D_k D_\ell F)^4 \Big]\,.
\end{align}
We will now further bound the first summand on the right-hand side of \eqref{Kolmogorov lemma proof bound 3}. For every $k \in \N$, it holds that
\begin{align}
&D_k^+F = (F_k^+ - F_k^-)\1_{\{ X_k = -1 \}} = \frac{1}{\sqrt{p_kq_k}} D_kF \1_{\{ X_k = -1 \}}\,,\label{F_k - F +}\\
&D_k^-F = (F_k^- - F_k^+)\1_{\{ X_k = +1 \}} = -\frac{1}{\sqrt{p_kq_k}} D_kF \1_{\{ X_k = +1 \}}\,.\label{F_k - F -}
\end{align}
Combining \eqref{gamma0b} with \eqref{F_k - F +} and \eqref{F_k - F -} then yields
\begin{align}\label{Kolmogorov lemma proof bound 4}
2\Gamma_0(D_\ell F, D_\ell F) &= \sum_{k=1}^\infty \frac{1}{p_kq_k}(D_k D_\ell F)^2 (q_k\1_{\{ X_k=+1 \}} + p_k\1_{\{ X_k=-1 \}}) \notag\\
&\geq \sum_{k=1}^\infty (D_k D_\ell F)^2\,.
\end{align}
By \eqref{Kolmogorov lemma proof bound 4} and \eqref{e:cb3} we then get
\begin{align}\label{Kolmogorov lemma proof bound 5}
&8\E \Big[ \sum_{\ell=1}^\infty \frac{1}{p_\ell q_\ell}(D_\ell F)^2 \sum_{k=1}^\infty (D_k D_\ell F)^2 \Big] \leq 16\E \Big[ \sum_{\ell=1}^\infty \frac{1}{p_\ell q_\ell}(D_\ell F)^2 \Gamma_0(D_\ell F, D_\ell F) \Big] \notag\\
&\qquad \leq 16(m-1)\E[\norm{(pq)^{-1/4} DF}_{\ell^4(\N)}^4]\,.
\end{align}
Turning to the second summand on the right-hand side of \eqref{Kolmogorov lemma proof bound 3} it follows from the first step in \eqref{Kolmogorov lemma proof bound 4} that
\begin{align}\label{Kolmogorov lemma proof bound 7}
4\E[(\Gamma_0(D_\ell F, D_\ell F))^2] &= \sum_{k=1}^\infty \frac{1}{p_kq_k} \E[(D_k D_\ell F)^4] + \sum_{k,m=1 \atop k \neq m}^\infty \E[(D_kD_\ell F)^2(D_mD_\ell F)^2] \notag\\
&\geq \sum_{k=1}^\infty \frac{1}{p_kq_k} \E[(D_k D_\ell F)^4]\,.
\end{align}
By \eqref{Kolmogorov lemma proof bound 7} and \eqref{e:cb2} we then get
\begin{align}\label{Kolmogorov lemma proof bound 8}
&2\E \Big[ \sum_{\ell=1}^\infty \frac{1}{p_\ell q_\ell} \sum_{k=1}^\infty \frac{1}{p_kq_k} (D_k D_\ell F)^4 \Big] \leq 8\E \Big[ \sum_{\ell=1}^\infty \frac{1}{p_\ell q_\ell}(\Gamma_0(D_\ell F, D_\ell F))^2 \Big] \notag\\
&\qquad \leq 8(m-1)^2\E[\norm{(pq)^{-1/4} DF}_{\ell^4(\N)}^4]\,.
\end{align}
Therefore, combining \eqref{Kolmogorov lemma proof bound 5} and \eqref{Kolmogorov lemma proof bound 8} with \eqref{Kolmogorov lemma proof bound 3} yields
\begin{align}\label{Kolmogorov lemma proof bound 9}
&\E \Big[ \sum_{k,\ell=1}^\infty (D_k u_\ell)^2 \Big] \leq 8(m^2-1)\E[\norm{(pq)^{-1/4} DF}_{\ell^4(\N)}^4]\,.
\end{align}
By virtue of Lemma \ref{remlemma} the quantity on the right-hand side of \eqref{Kolmogorov lemma proof bound 9} is finite. Thus, for every $k \in \N$, $u_k \in \dom(D) \subset L^2(\Omega)$ and admits a chaos representation of the form $u_k = \sum_{n=1}^\infty J_{n-1}(g_n(\, \cdot \, ,k))$ with $g_n \in \ell_0^2(\N)^{\circ n-1} \otimes \ell^2(\N)$, for every $n \in \N$. By the isometry formula in \eqref{iso} it then follows that
\begin{align*}
\sum_{k,\ell=1}^{\infty} \E[(D_k u_\ell)^2] &= \sum_{k,\ell=1}^{\infty} \E \Big[ \Big( \sum_{n=2}^\infty (n-1) J_{n-2}(g_n(\, \cdot \,,k,\ell)) \Big)^2 \Big]\\
&=\sum_{k,\ell=1}^{\infty} \sum_{n=2}^{\infty} (n-1)^2(n-2)! \ellnorm{2}{n-2}{g_n(\, \cdot \,,k,\ell)}^2\\
&=\sum_{n=2}^{\infty} (n-1)(n-1)! \ellnorm{2}{n}{g_n}^2\,.
\end{align*}
So,
\begin{align*}
\sum_{n=2}^{\infty} n! \ellnorm{2}{n}{g_n}^2 \leq \sum_{n=2}^{\infty} 2(n-1)(n-1)! \ellnorm{2}{n}{g_n}^2 = 2\sum_{k,\ell=1}^{\infty} \E[(D_\ell u_k)^2] < \infty
\end{align*}
and $u$ fulfills condition (2.14) from \cite{KRT2}. Note here that condition (2.14) from \cite{KRT2} also implies that $u \in \dom(\delta)$. Now, an application of the integration by parts formula from Proposition 2.2 in \cite{KRT2} yields
\begin{align}\label{Kolmogorov lemma proof bound 1}
&\frac{1}{m} \sup_{x \in \R} \E \Big[ \Big\langle \frac{1}{\sqrt{pq}} DF\absolute{DF}, D\1_{\{F>x\}} \Big\rangle_{\ell^2(\N)} \Big] = \frac{1}{m} \sup_{x \in \R} \E \Big[ \delta\Big( \frac{1}{\sqrt{pq}} DF\absolute{DF} \Big) \1_{\{F>x\}} \Big] \notag\\
&\qquad \leq \frac{1}{m} \E \Big[ \Bigabsolute{\delta\Big( \frac{1}{\sqrt{pq}} DF\absolute{DF} \Big)} \Big] \leq \frac{1}{m} \sqrt{\E \Big[ \Big( \delta\Big( \frac{1}{\sqrt{pq}} DF\absolute{DF} \Big) \Big)^2 \Big]}\,.
\end{align}
The Skorohod isometry formula in \eqref{Skorohod isometry} then yields
\begin{align}\label{Kolmogorov lemma proof bound 2}
&\E \Big[ \Big( \delta\Big( \frac{1}{\sqrt{pq}} DF\absolute{DF} \Big) \Big)^2 \Big] \notag\\
&\qquad \leq \E[\norm{(pq)^{-1/4} DF}_{\ell^4(\N)}^4] + \E \Big[ \sum_{k,\ell=1}^\infty \Big( D_k \Big( \frac{1}{\sqrt{p_\ell q_\ell}} D_\ell F \absolute{D_\ell F} \Big) \Big)^2 \Big]\,.
\end{align}
By plugging \eqref{Kolmogorov lemma proof bound 9} into \eqref{Kolmogorov lemma proof bound 2} we can apply Lemma \ref{remlemma} to deduce that
\begin{align}\label{Kolmogorov lemma proof bound 10}
&\E \Big[ \Big( \delta\Big( \frac{1}{\sqrt{pq}} DF\absolute{DF} \Big) \Big)^2 \Big] \leq (8m^2-7)\E[\norm{(pq)^{-1/4} DF}_{\ell^4(\N)}^4] \notag\\
&\qquad \leq (8m^2-7) \Big( (4m-3) \Big( \E[F^4]-3(\Var(F))^2 \Big) \notag\\
&\phantom{\qquad {}\leq (4m^2-3) \Big(}+ (6m-3) \gamma_m \Var(F) \sup_{j\in\N}\Inf_j(f) \Big)\,.
\end{align}
The proof is now concluded by plugging \eqref{Kolmogorov lemma proof bound 10} into \eqref{Kolmogorov lemma proof bound 1}.
\end{proof}

\section{Proof of Theorem \ref{mt}}\label{mtproof}
First we establish new abstract bounds on the normal approximation of functionals of our Rademacher sequence $X=(X_j)_{j\in\N}$.  

\begin{prop}\label{genbound}
 Let $F\in \dom(D)$ be such that $\E[F]=0$ and let $N\sim N(0,1)$ be a standard normal random variable. 
 Then, we have the bounds
 \begin{align}
  d_\W(F,N)&\leq \sqrt{\frac{2}{\pi}}\E\Babs{1-\Gamma_0\bigl(F,-L^{-1}F\bigr)} +\sum_{k=1}^\infty \frac{1}{\sqrt{p_k q_k}} \E\Bigl[\babs{D_kF}^2\babs{D_kL^{-1}F}\Bigr]   \label{gb1}\\
  &\leq\sqrt{\frac{2}{\pi}}\babs{1-\E[F^2]}+ \sqrt{\frac{2}{\pi}}\sqrt{\Var\bigl(\Gamma_0(F,-L^{-1}F)\bigr)}\notag\\
  &\; +\sum_{k=1}^\infty\frac{1}{\sqrt{p_k q_k}} \E\Bigl[\babs{D_kF}^2\babs{D_kL^{-1}F}\Bigr]  \,. \label{gb2}
 \end{align}
If, furthermore, $F=J_m(f)$ for some $m\in\N$ and some kernel $f\in\ell_0^2(\N)^{\circ m}$ and $\E[F^2]=m!\norm{f}_{\ell^2(\N^m)}^2=1$, then $-L^{-1}F=m^{-1}F$, 
\begin{align*}
 \E\bigl[\Gamma_0(F,-L^{-1}F)\bigr]&=m^{-1}\E\bigl[\Gamma_0(F,F)\bigr]=1\quad\text{and}\\
 \sum_{k=1}^\infty\frac{1}{\sqrt{p_k q_k}} \E\Bigl[\babs{D_kF}^2\babs{D_kL^{-1}F}\Bigr]&=\frac1m\sum_{k=1}^\infty\frac{1}{\sqrt{p_k q_k}} \E\Bigl[\babs{D_kF}^3\Bigr]\\
 &\leq\biggl(\frac1m\sum_{k=1}^\infty { \frac{1}{p_k q_k}} \E\Bigl[\babs{D_kF}^4\Bigr]\biggr)^{1/2}
\end{align*}
so that the previous estimate \eqref{gb2} gives
\begin{align}
  d_\W(F,N)&\leq \sqrt{\frac{2}{\pi}}\sqrt{\Var\bigl(m^{-1}\Gamma_0(F,F)\bigr)}+\biggl(\frac1m\sum_{k=1}^\infty { \frac{1}{p_k q_k}} \E\Bigl[\babs{D_kF}^4\Bigr]\biggr)^{1/2}\,.\label{sb1}
\end{align}
\end{prop}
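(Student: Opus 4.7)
The strategy is to combine Stein's method of normal approximation with the integration by parts formula of Proposition~\ref{intparts} and the Taylor-type expansion of Lemma~\ref{Difflemma}(b). Fix $h\in\Lip(1)$ and let $f_h$ be the classical Stein solution to $f_h'(x)-xf_h(x)=h(x)-\E[h(N)]$; the standard bounds $\fnorm{f_h'}\leq\sqrt{2/\pi}$ and $\fnorm{f_h''}\leq2$ reduce the task to controlling $|\E[f_h'(F)-Ff_h(F)]|$ uniformly over such $h$. Since $\E[F]=0$, we have $F=LL^{-1}F$ with $L^{-1}F\in\dom(L)$, while Lemma~\ref{Lipdom} yields $f_h(F)\in\dom(D)$; Proposition~\ref{intparts} then gives $\E[Ff_h(F)]=\E[f_h(F)LL^{-1}F]=-\E[\Gamma_0(f_h(F),L^{-1}F)]=\E[\Gamma_0(f_h(F),-L^{-1}F)]$.

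Next I linearize $\Gamma_0(f_h(F),-L^{-1}F)$. Applying Lemma~\ref{Difflemma}(b) to $\psi=f_h$ produces $D_k^\pm f_h(F)=f_h'(F)\,D_k^\pm F+R_h^\pm(F,k)(D_k^\pm F)^2$ with $|R_h^\pm(F,k)|\leq\fnorm{f_h''}/2\leq1$, and substituting this into the representation~\eqref{gamma0b} of $\Gamma_0$ decomposes $\Gamma_0(f_h(F),-L^{-1}F)$ as $f_h'(F)\,\Gamma_0(F,-L^{-1}F)+R$. It follows that $|\E[f_h'(F)-Ff_h(F)]|\leq\fnorm{f_h'}\,\E|1-\Gamma_0(F,-L^{-1}F)|+\E|R|$. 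To bound $\E|R|$ I note that $D_k^+F=0$ on $\{X_k=+1\}$ and $D_k^-F=0$ on $\{X_k=-1\}$, whereas both $D_kF=\sqrt{p_kq_k}(F_k^+-F_k^-)$ and $D_kL^{-1}F$ are measurable with respect to $\sigma(X_j:j\neq k)$ and hence independent of $X_k$; a short case analysis then gives $\E[q_k(D_k^-F)^2|D_k^-L^{-1}F|+p_k(D_k^+F)^2|D_k^+L^{-1}F|]=2(p_kq_k)^{-1/2}\E[(D_kF)^2|D_kL^{-1}F|]$ for each $k$. Combined with $\fnorm{f_h'}\leq\sqrt{2/\pi}$ on the main term, this yields \eqref{gb1} after summation.

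To derive \eqref{gb2} from \eqref{gb1}, I use Jensen's inequality in the form $\E|1-\Gamma_0(F,-L^{-1}F)|\leq|1-\E[\Gamma_0(F,-L^{-1}F)]|+\sqrt{\Var(\Gamma_0(F,-L^{-1}F))}$ and observe, via Proposition~\ref{intparts} with $H=F$ and $G=-L^{-1}F$, that $\E[\Gamma_0(F,-L^{-1}F)]=\E[F^2]$. For $F=J_m(f)$ with $\E[F^2]=1$, the eigenfunction identity $LF=-mF$ immediately produces $-L^{-1}F=m^{-1}F$ and $D_kL^{-1}F=-m^{-1}D_kF$, and (again by Proposition~\ref{intparts}) $\E[\Gamma_0(F,F)]=-\E[FLF]=m$, yielding all the claimed equalities. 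The final bound~\eqref{sb1} is then obtained by two successive applications of Cauchy--Schwarz to $\sum_k(p_kq_k)^{-1/2}\E|D_kF|^3$, in conjunction with $\sum_k\E[(D_kF)^2]=\E[\Gamma_0(F,F)]=m$ (the latter following from~\eqref{gamma0a} since $Y_k$ is centered and independent of $D_kF$).

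The main technical subtlety is the case analysis in the second paragraph: the representation~\eqref{gamma0b} of $\Gamma_0$ uses the asymmetric discrete derivatives $D_k^\pm$, yet the desired remainder bound in \eqref{gb1} is expressed via the symmetric $D_k$; reconciling them requires exploiting the fact that each of $D_kF$ and $D_kL^{-1}F$ is independent of $X_k$, so that the two asymmetric terms collapse, in expectation, into a single symmetric one. Everything else is standard Stein and Malliavin bookkeeping.
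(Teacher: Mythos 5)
Your proposal is correct and follows essentially the same route as the paper: the same Stein class with $\fnorm{\psi'}\le\sqrt{2/\pi}$, $\fnorm{\psi''}\le 2$, the integration by parts of Proposition~\ref{intparts} applied to $\psi(F)$ and $-L^{-1}F$, the linearization of $\Gamma_0(\psi(F),-L^{-1}F)$ via Lemma~\ref{Difflemma}(b) and \eqref{gamma0b}, and the collapse of the two one-sided remainder terms into the symmetric $(p_kq_k)^{-1/2}\E[(D_kF)^2|D_kL^{-1}F|]$ using the indicator structure of $D_k^{\pm}$ and the independence of $D_kF$, $D_kL^{-1}F$ from $X_k$. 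The only (immaterial) deviation is that you obtain $\sum_k\E[(D_kF)^2]=m$ from $\E[\Gamma_0(F,F)]=-\E[FLF]$ rather than from the direct isometry computation the paper uses.
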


\begin{proof}
The proof uses Stein's method for normal approximation.  Define the class $\mathscr{F}_\W$ of all continuously differentiable functions $\psi$ on $\R$ such that both $\psi$ and $\psi'$ are Lipschitz-continuous with minimal Lipschitz constants 
\begin{equation}\label{steinsol}
 \fnorm{\psi'}\leq\sqrt{\frac{2}{\pi}}\quad\text{and}\quad\fnorm{\psi''}\leq 2\,.
\end{equation}
 Then, it is well-known (see e.g.\ Theorem 3 of \cite{BPsv} and the references therein) that 
 \begin{equation}\label{pgb1}
  d_\W(F,N)\leq\sup_{\psi\in\mathscr{F}_\W}\babs{\E\bigl[\psi'(F)-F\psi(F)\bigr]}\,.
 \end{equation}
Let us thus fix $\psi\in\mathscr{F}_\W$. By Lemma \ref{Lipdom}, since $\psi$ is Lipschitz, we have $\psi(F)\in\dom(D)$. As $\E[F]=0$, $L^{-1}F$ is well-defined and an element of $\dom(L)$. Hence, as $F=LL^{-1}F$, by Proposition \ref{intparts} we have
\begin{align}\label{pgb2}
 \E\bigl[F\psi(F)\bigr]&=\E\bigl[\psi(F)\cdot LL^{-1}F\bigr]=-\E\bigl[\Gamma_0\bigl(\psi(F),L^{-1}F\bigr)\bigr]\,.
\end{align}
Now, from Equation \eqref{gamma0b} and Lemma \ref{Difflemma} (b) we obtain that 
\begin{align}\label{pgb3}
 &2\Gamma_0\bigl(\psi(F),L^{-1}F\bigr)=\sum_{k=1}^\infty \Bigl(q_k\bigl(D_k^-\psi(F)\bigr)\bigl(D_k^-L^{-1}F\bigr)+p_k\bigl(D_k^+\psi(F)\bigr)\bigl(D_k^+L^{-1}F\bigr)\Bigr)\notag\\
 &=\psi'(F)\sum_{k=1}^\infty q_k\bigl(D_k^-F\bigr)\bigl(D_k^-L^{-1}F\bigr)+\sum_{k=1}^\infty q_k R_\psi^-(F,k)\bigl(D_k^-F\bigr)^2\bigl(D_k^-L^{-1}F\bigr)\notag\\
 &\;+\psi'(F)\sum_{k=1}^\infty p_k\bigl(D_k^+F\bigr)\bigl(D_k^+L^{-1}F\bigr)+\sum_{k=1}^\infty p_k R_\psi^+(F,k)\bigl(D_k^+F\bigr)^2\bigl(D_k^+L^{-1}F\bigr)\notag\\
 &=\psi'(F)\sum_{k=1}^\infty q_k\bigl(D_k^-F\bigr)\bigl(D_k^-L^{-1}F\bigr)+R_+ +\psi'(F)\sum_{k=1}^\infty p_k\bigl(D_k^+F\bigr)\bigl(D_k^+L^{-1}F\bigr)+R_-\notag\\
 &=2\psi'(F)\Gamma_0\bigl(F,L^{-1}F\bigr)+R_+ + R_-\,,
\end{align}
where
\begin{align}\label{pgb4}
 \E\babs{R_+}&\leq\frac{\fnorm{\psi''}}{2}\sum_{k=1}^\infty p_k\E\Babs{(D_k^+F)^2 D_k^+L^{-1}F}\notag\\
 &\leq \sum_{k=1}^\infty p_k\E\Bigl[\Babs{(D_k^+F)^2 D_k^+L^{-1}F} \mathds{1}_{\{X_k=-1\}}\Bigr]\notag\\
 &=\sum_{k=1}^\infty p_kq_k\E\Babs{(F_k^+-F_k^-)^2 \bigl((L^{-1}F)_k^+-(L^{-1}F)_k^-\bigr)}\notag\\
 &=\sum_{k=1}^\infty \frac{1}{\sqrt{p_kq_k}}\E\babs{(D_kF)^2(D_kL^{-1}F)}\,.
\end{align}
Similarly, one shows that 
\begin{align}\label{pgb5}
  \E\babs{R_-}&\leq\sum_{k=1}^\infty \frac{1}{\sqrt{p_kq_k}}\E\babs{(D_kF)^2(D_kL^{-1}F)}\,.
\end{align}
From \eqref{pgb3} we conclude that 
\begin{align*}
 \Babs{\E\bigl[\psi'(F)-F\psi(F)\bigr]}&\leq \Babs{\E\bigl[\psi'(F)\bigr(1-\Gamma_0(F,-L^{-1}F)\bigl)\bigr]}+\frac12\bigl(\E\babs{R_+}+\E\babs{R_-}\bigr)\,,
\end{align*}
which, along with \eqref{pgb1}, \eqref{steinsol}, \eqref{pgb4} and \eqref{pgb5} implies 
\begin{align*}
 d_\W(F,Z)&\leq\sqrt{\frac{2}{\pi}}\E\babs{1-\Gamma_0(F,-L^{-1}F)}+\sum_{k=1}^\infty \frac{1}{\sqrt{p_kq_k}}\E\babs{(D_kF)^2(D_kL^{-1}F)}\,.
\end{align*}
Hence, \eqref{gb1} is proved and \eqref{gb2} now easily follows by first applying the triangle and then the Cauchy-Schwarz inequality. In order to prove \eqref{sb1} we first apply the Cauchy-Schwarz inequality to obtain
\begin{equation}\label{pgb6}
 \sum_{k=1}^\infty \frac{1}{\sqrt{p_kq_k}}\E\babs{D_kF}^3\leq \biggl( \sum_{k=1}^\infty \E\babs{D_kF}^2\biggr)^{1/2}\biggl( \sum_{k=1}^\infty \frac{1}{p_kq_k}\E\babs{D_kF}^4\biggr)^{1/2}\,.
\end{equation}
Now, using $F=J_m(f)$ as well as \eqref{iso} we have 
\begin{align}\label{pgb7}
 \sum_{k=1}^\infty \E\babs{D_kF}^2&=\sum_{k=1}^\infty\E\bigl[\bigl(m J_{m-1}(f(k,\cdot))\bigr)^2\bigr]=m^2\sum_{k=1}^\infty (m-1)!\norm{f(k,\cdot)}^2_{\ell^2(\N^{m-1})}\notag\\
 &=m m!\norm{f}^2_{\ell^2(\N^{m})}=m\E\bigl[F^2\bigr]=m\,.
\end{align}
Hence, from \eqref{pgb6} and \eqref{pgb7} we conclude
\begin{align*}
 \frac1m\sum_{k=1}^\infty \frac{1}{\sqrt{p_kq_k}}\E\babs{D_kF}^3\leq \biggl(\frac1m \sum_{k=1}^\infty \frac{1}{p_kq_k}\E\babs{D_kF}^4\biggr)^{1/2}
\end{align*}
which in turn yields \eqref{sb1}.
\end{proof}

\begin{prop}\label{KolmogorovProposition}
Under the same assumptions as in Proposition \ref{genbound}, one has the bounds
\begin{align}
&d_\K(F,N) \notag\\
&\leq \E[\absolute{1 - \Gamma_0(F,-L^{-1}F)}] \notag\\
&\phantom{{}\leq{}} +\frac{1}{4} \E \Big[ \Big( \absolute{F}+\frac{\sqrt{2\pi}}{4} \Big) \sum_{k=1}^\infty \frac{1}{(pq)^{3/2}} (D_kF)^2 \absolute{-D_kL^{-1}F} (q_k\1_{\{X_k=+1\}}+p_k\1_{\{X_k=-1\}}) \Big] \notag\\
&\phantom{{}\leq{}} +\sup_{x \in \R} \E \Big[ \Big\langle \frac{1}{\sqrt{pq}} (DF) (D\1_{\{F>x\}}), \absolute{-DL^{-1}F} \Big\rangle_{\ell^2(\N)} \Big] \label{KolmogorovBound1}\\
&\leq \E[\absolute{1 - \Var(F)}] + \sqrt{\Var(\Gamma_0(F,-L^{-1}F))} \notag\\
&\phantom{{}\leq{}} + \frac{1}{2\sqrt{2}} \sqrt{\E \Big[ \Big\langle \frac{1}{pq} (DF)^2, (-DL^{-1}F)^2 \Big\rangle_{\ell^2(\N)} \Big]}((\E[F^4])^{1/4} + 1) \notag\\
&\phantom{{}\leq{} + \frac{1}{2\sqrt{2}}} \times \Big( \E \Big[ \Big( \sum_{k=1}^\infty \frac{1}{p_kq_k} (D_kF)^2 (q_k\1_{\{X_k=+1\}}+p_k\1_{\{X_k=-1\}}) \Big)^2 \Big] \Big)^{1/4} \notag\\
&\phantom{{}\leq{}} +\sup_{x \in \R} \E \Big[ \Big\langle \frac{1}{\sqrt{pq}} (DF) (D\1_{\{F>x\}}), \absolute{-DL^{-1}F} \Big\rangle_{\ell^2(\N)} \Big]\,. \label{KolmogorovBound2}
\end{align}
If, furthermore, $F=J_m(f)$ for some $m\in\N$ and some kernel $f\in\ell_0^2(\N)^{\circ m}$ and $\Var(F)=m!\norm{f}_{\ell^2(\N^{m})}^2=1$, then \eqref{KolmogorovBound2} becomes 
\begin{align}
d_\K(F,N) &\leq \frac{1}{m} \sqrt{\Var(\Gamma_0(F,F))} \notag\\
&\phantom{{}\leq{}} + \frac{1}{2\sqrt{2}m} \sqrt{\E[\norm{(pq)^{-1/4} DF}_{\ell^4(\N)}^4]}((\E[F^4])^{1/4} + 1) \notag\\
&\phantom{{}\leq{} + \frac{1}{2\sqrt{2}m}} \times \Big( \E \Big[ \Big( \sum_{k=1}^\infty \frac{1}{p_kq_k} (D_kF)^2 (q_k\1_{\{X_k=+1\}}+p_k\1_{\{X_k=-1\}}) \Big)^2 \Big] \Big)^{1/4} \notag\\
&\phantom{{}\leq{}} + \frac{1}{m} \sup_{x \in \R} \E \Big[ \Big\langle \frac{1}{\sqrt{pq}} DF\absolute{DF}, D\1_{\{F>x\}} \Big\rangle_{\ell^2(\N)} \Big]\,. \label{KolmogorovBound3}
\end{align}
\end{prop}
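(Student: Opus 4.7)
My plan mirrors the Wasserstein argument in Proposition \ref{genbound} but with the Stein solution associated to the indicator test functions, whose lack of smoothness is what produces the extra third term in the bound.

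\textbf{Reduction via Stein's method.} By the standard Stein bound for Kolmogorov distance,
\[
 d_\K(F,N)\leq \sup_{x\in\R}\babs{\E[f_x'(F)-Ff_x(F)]},
\]
where $f_x$ is the (absolutely continuous) Stein solution of $f_x'(w)-wf_x(w)=\1_{\{w\leq x\}}-\Phi(x)$, satisfying $\fnorm{f_x}\leq\sqrt{2\pi}/4$ and $\fnorm{f_x'}\leq 1$. Since $f_x$ is $1$-Lipschitz, Lemma~\ref{Lipdom} gives $f_x(F)\in\dom(D)$, and because $F=LL^{-1}F$, Proposition~\ref{intparts} yields
\[
 \E[Ff_x(F)]=-\E[\Gamma_0(f_x(F),L^{-1}F)].
\]

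\textbf{Taylor expansion of $D_k^{\pm}f_x(F)$.} The key step is to write, for each $k\in\N$,
\[
 D_k^{\pm}f_x(F)=f_x(F_k^{\pm})-f_x(F)=f_x'(F)\,D_k^{\pm}F +\int_F^{F_k^{\pm}}\bigl(f_x'(u)-f_x'(F)\bigr)du,
\]
and substitute the Stein equation to split
\[
 f_x'(u)-f_x'(F)=\bigl(uf_x(u)-Ff_x(F)\bigr)+\bigl(\1_{\{u\leq x\}}-\1_{\{F\leq x\}}\bigr).
\]
Plugging this back into the representation \eqref{gamma0b} of $\Gamma_0(f_x(F),L^{-1}F)$ produces the identity
\[
 -\Gamma_0\bigl(f_x(F),L^{-1}F\bigr)=f_x'(F)\,\Gamma_0(F,-L^{-1}F)-R_{sm}(x)-R_{ind}(x),
\]
where $R_{sm}(x)$ and $R_{ind}(x)$ collect the smooth and indicator remainders respectively. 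Combining with $\E[f_x'(F)]$ and using $\fnorm{f_x'}\le 1$ gives
\[
 \babs{\E[f_x'(F)-Ff_x(F)]}\leq \E\babs{1-\Gamma_0(F,-L^{-1}F)}+\E|R_{sm}(x)|+\E|R_{ind}(x)|.
\]

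\textbf{Bounding the remainders.} For the smooth part, decomposing $uf_x(u)-Ff_x(F)=(u-F)f_x(u)+F(f_x(u)-f_x(F))$ and applying $\fnorm{f_x}\leq\sqrt{2\pi}/4$, $\fnorm{f_x'}\leq 1$ yields the pointwise bound $|uf_x(u)-Ff_x(F)|\leq (|F|+\sqrt{2\pi}/4)|u-F|$, so that
\[
 \bbabs{\int_F^{F_k^{\pm}}\bigl(uf_x(u)-Ff_x(F)\bigr)du}\leq\tfrac12\bigl(|F|+\sqrt{2\pi}/4\bigr)(D_k^{\pm}F)^2.
\]
For the indicator part, a short case analysis on the relative position of $F$, $F_k^{\pm}$, and $x$ (the only non-trivial contribution occurs when $x$ lies strictly between $F$ and $F_k^{\pm}$) gives
\[
 \bbabs{\int_F^{F_k^{\pm}}\bigl(\1_{\{u\leq x\}}-\1_{\{F\leq x\}}\bigr)du}\leq \babs{D_k^{\pm}F}\cdot\babs{D_k^{\pm}\1_{\{F>x\}}}.
\]
Using the identities $D_k^+F=(D_kF/\sqrt{p_kq_k})\1_{\{X_k=-1\}}$ and $D_k^-F=-(D_kF/\sqrt{p_kq_k})\1_{\{X_k=+1\}}$ (and analogously for $D_k^{\pm}L^{-1}F$ and $D_k^{\pm}\1_{\{F>x\}}$), the weights $p_k$ and $q_k$ occurring in \eqref{gamma0b} combine with the $X_k$-indicators to produce the factor $q_k\1_{\{X_k=+1\}}+p_k\1_{\{X_k=-1\}}$ in the smooth term and, after taking expectations, convert the indicator term to $\E\bigl[\langle (pq)^{-1/2}(DF)(D\1_{\{F>x\}}),|-DL^{-1}F|\rangle_{\ell^2(\N)}\bigr]$. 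Taking $\sup_x$ on the last piece yields exactly \eqref{KolmogorovBound1}.

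\textbf{From \eqref{KolmogorovBound1} to \eqref{KolmogorovBound2} and \eqref{KolmogorovBound3}.} For the first term, note that Proposition~\ref{intparts} with $H=F$, $G=-L^{-1}F$ gives $\E[\Gamma_0(F,-L^{-1}F)]=\E[F^2]=\Var(F)$, so by the triangle inequality and $\E|X-\E X|\leq\sqrt{\Var(X)}$,
\[
 \E\babs{1-\Gamma_0(F,-L^{-1}F)}\leq |1-\Var(F)|+\sqrt{\Var(\Gamma_0(F,-L^{-1}F))}.
\]
For the smooth remainder term, I factor $\frac{(D_kF)^2|D_kL^{-1}F|\,\text{wt}_k}{(p_kq_k)^{3/2}}$ as $\frac{|D_kF||D_kL^{-1}F|}{\sqrt{p_kq_k}}\cdot\frac{|D_kF|\,\text{wt}_k}{p_kq_k}$ with $\text{wt}_k:=q_k\1_{\{X_k=+1\}}+p_k\1_{\{X_k=-1\}}$, apply the Cauchy--Schwarz inequality in $k$, and then Cauchy--Schwarz and Minkowski in $L^4(\P)$ on the pair $(|F|+\sqrt{2\pi}/4)$ and the two sums; this produces the three-factor bound in \eqref{KolmogorovBound2}. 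The specialization \eqref{KolmogorovBound3} is obtained by plugging in $-L^{-1}F=F/m$ and $\Var(F)=1$.

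\textbf{Main obstacle.} The delicate point is the Taylor analysis of $f_x$, which is only piecewise $C^1$: one has to split the first-order remainder into an absolutely continuous part (controlled via $\|\cdot\|_\infty$-bounds on $f_x,f_x'$, producing the factor $|F|+\sqrt{2\pi}/4$) and a jump part that, via the case analysis above, reduces cleanly to the discrete derivative $D_k^{\pm}\1_{\{F>x\}}$. It is precisely this step that is responsible for the appearance of the $\sup_x$--term specific to the Kolmogorov distance.
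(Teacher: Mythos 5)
Your proposal is correct and follows essentially the same route as the paper's proof: the Stein equation for the Kolmogorov distance combined with the integration by parts formula for $\Gamma_0$, a first-order expansion of the Stein solution whose remainder is split into an absolutely continuous part (controlled by the Chen--Goldstein--Shao Stein factors, yielding the $\absolute{F}+\sqrt{2\pi}/4$ term) and an indicator jump part (controlled by a case analysis reducing to $D_k\1_{\{F>x\}}$, yielding the $\sup_x$ term), followed by Cauchy--Schwarz and Minkowski to pass to \eqref{KolmogorovBound2} and the substitution $-L^{-1}F=m^{-1}F$ for \eqref{KolmogorovBound3}. The only cosmetic difference is that you expand $D_k^{+}$ and $D_k^{-}$ separately before recombining via the identities $D_k^{\pm}F=\pm(p_kq_k)^{-1/2}D_kF\,\1_{\{X_k=\mp1\}}$, whereas the paper first assembles the single remainder $R_k(F)=D_kg_x(F)-g_x'(F)D_kF$ as one integral $\int_{D_k^-F}^{D_k^+F}$; these are equivalent.
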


\begin{proof}
Again, we make use of Stein's method for normal approximation. The starting point is the Stein equation corresponding to the Kolmogorov distance. For $x \in \R$, this equation and its unique bounded solution are given by
\begin{align}\label{Stein equation Kolmogorov}
g'(z) - zg(z) = \1_{(-\infty, x]}(z) - \P(N \leq x)
\end{align}
and
\begin{align}\label{Stein solution Kolmogorov}
g_x(z) := e^{z^2/2} \int_{-\infty}^z (\1_{(\infty,x]}(y) - \P(N \leq x))e^{-y^2/2} \, dy\,,
\end{align}
for every $z \in \R$. Since $g_x$ is not differentiable at the point $x$, one conventionally defines its derivative at the point $x$ by the Stein equation \eqref{Stein equation Kolmogorov} as
\begin{align*}
g_x'(x) := xg_x(x) + 1 - \P(N \leq x)\,.
\end{align*}
This guarantees that \eqref{Stein equation Kolmogorov} really holds in a pointwise sense which is of some importance when dealing with distributions which might have point masses.
It is well known (see e.g.\ Lemma 2.3 in \cite{CheGolSha}) that, for every $x \in \R$, the Stein solution $g_x$ and its derivative can be bounded as follows:
\begin{align}\label{Stein factors Kolmogorov 1}
\absolute{(w+u)g_x(w+u) - (w+v)g_x(w+v)} \leq \Big( \absolute{w} + \frac{\sqrt{2\pi}}{4} \Big) (\absolute{u} + \absolute{v})
\end{align}
and
\begin{align}\label{Stein factors Kolmogorov 2}
\absolute{g_x'(w)} \leq 1\,,
\end{align}
for every $u,v,w \in \R$. Now, by the Stein equation \eqref{Stein equation Kolmogorov} we have, for every $x \in \R$,
\begin{align}\label{Abstract bound proof equation 1}
\P(F \leq x) - \P(N \leq x) = \E[g_x'(F)] - \E[Fg_x(F)]\,.
\end{align}
Note that, for every $x \in \R$, $g_x(F) \in \dom(D)$, since by the mean value theorem and \eqref{Stein factors Kolmogorov 2} we have, for every $k \in \N$,
\begin{align*}
\absolute{D_kg_x(F)} = \sqrt{p_kq_k} \absolute{g_x(F_k^+) - g_x(F_k^-)} \leq \norm{g_x'}_{\infty} \sqrt{p_kq_k} \absolute{F_k^+ - F_k^-} \leq D_kF\,,
\end{align*}
and thus,
\begin{align*}
\E[\norm{Dg_x(F)}_{\ell^2(\N)}^2] = \E \Big[ \sum_{k=1}^\infty (D_kg_x(F))^2 \Big] \leq \E \Big[ \sum_{k=1}^\infty (D_kF)^2 \Big] = \E[\norm{DF}_{\ell^2(\N)}^2] < \infty\,,
\end{align*}
where the last expectation is finite, since $F \in \dom(D)$. Hence, as in the proof of Proposition \ref{genbound}, we can apply the integration by parts formula from Proposition \ref{intparts} for $G=-L^{-1}F$ and $H = g_x(F)$ to \eqref{Abstract bound proof equation 1} and get, for every $x \in \R$,
\begin{align}\label{Abstract bound proof equation 2}
 \P(F \leq x) - \P(N \leq x) &= \E[g_x'(F)] - \E[\Gamma_0(g_x(F), -L^{-1}F)]\,.
\end{align}
Now, for every $x \in \R$, we can write
\begin{align*}
\Gamma_0(g_x(F),-L^{-1}F) &= g_x'(F)\Gamma_0(F,-L^{-1}F) \notag\\
&\phantom{{}={}}+ \frac{1}{2} \sum_{k=1}^\infty \Big(q_k(D_k^-g_x(F)-g_x'(F)D_k^-F)(-D_k^-L^{-1}F) \notag\\
&\phantom{{}={}+ \frac{1}{2} \sum_{k=1}^\infty \Big(}+ p_k(D_k^+g_x(F)-g_x'(F)D_k^+F)(-D_k^+L^{-1}F)\Big)\,.
\end{align*}
Thus, it follows from \eqref{Abstract bound proof equation 2} and \eqref{Stein factors Kolmogorov 2} that
\begin{align}\label{Abstract bound proof equation 3}
\absolute{\P(F \leq x) - \P(N \leq x)} &\leq \E[\absolute{1-\Gamma_0(F,-L^{-1}F)}] \notag\\
&+ \frac{1}{2} \sum_{k=1}^\infty \E \Big[ q_k\absolute{D_k^-g_x(F)-g_x'(F)D_k^-F}\absolute{-D_k^-L^{-1}F} \notag\\
&\phantom{{}+ \frac{1}{2} \sum_{k=1}^\infty \E \Big[}+ p_k\absolute{D_k^+g_x(F)-g_x'(F)D_k^+F}\absolute{-D_k^+L^{-1}F}\Big]\,.
\end{align}
By using \eqref{F_k - F +} and \eqref{F_k - F -} we further deduce that
\begin{align}\label{Abstract bound proof equation 4}
&\frac{1}{2} \sum_{k=1}^\infty \E \Big[ q_k\absolute{D_k^-g_x(F)-g_x'(F)D_k^-F}\absolute{-D_k^-L^{-1}F} \notag\\
&\phantom{\frac{1}{2} \sum_{k=1}^\infty \E \Big[} + p_k\absolute{D_k^+g_x(F)-g_x'(F)D_k^+F}\absolute{-D_k^+L^{-1}F}\Big] \notag\\
&= \frac{1}{2} \sum_{k=1}^\infty \frac{1}{p_kq_k} \E[\absolute{D_kg_x(F)-g_x'(F)D_kF}\absolute{-D_kL^{-1}F} (q_k\1_{\{X_k=+1\}}+p_k\1_{\{X_k=-1\}})]\,.
\end{align}
Therefore, by putting $R_k(F) := D_kg_x(F)-g_x'(F)D_kF$, for every $k \in \N$, and combining \eqref{Abstract bound proof equation 4} with \eqref{Abstract bound proof equation 3} we get
\begin{align}\label{Abstract bound proof equation 5}
&\absolute{\P(F \leq x) - \P(N \leq x)} \notag\\
&\qquad \leq \E[\absolute{1-\Gamma_0(F,-L^{-1}F)}] \notag\\
&\qquad \phantom{{}\leq{}}+ \frac{1}{2} \sum_{k=1}^\infty \frac{1}{p_kq_k} \E[\absolute{R_k(F)} \absolute{-D_kL^{-1}F} (q_k\1_{\{X_k=+1\}}+p_k\1_{\{X_k=-1\}})]\,.
\end{align}
We will now further bound $R_k(F)$ for every $k \in \N$. By the Stein equation \eqref{Stein equation Kolmogorov} we have, for every $k \in \N$,
\begin{align}\label{Chain rule Kolmogorov remainder term}
R_k(F) &= \sqrt{p_kq_k}\int_{D_k^-F}^{D_k^+F} (g_x'(F + t) - g_x'(F)) \, dt \notag\\
&= \sqrt{p_kq_k}\int_{D_k^-F}^{D_k^+F} ((F+t)g_x(F + t) - Fg_x(F)) \, dt \notag\\
&\phantom{{}={}}+ \sqrt{p_kq_k}\int_{D_k^-F}^{D_k^+F} (\1_{\{ F+t \leq x \}} - \1_{\{ F \leq x \}}) \, dt\,.
\end{align}
By virtue of \eqref{Stein factors Kolmogorov 1}, for every $k \in \N$, the first summand on the right-hand side of \eqref{Chain rule Kolmogorov remainder term} can be bounded by
\begin{align}\label{Chain rule Kolmogorov remainder term equation 1}
&\Bigabsolute{\sqrt{p_kq_k}\int_{D_k^-F}^{D_k^+F} ((F+t)g_x(F + t) - Fg_x(F)) \, dt} \notag\\
&\leq \sqrt{p_kq_k} \Big( \absolute{F} + \frac{\sqrt{2\pi}}{4} \Big) \int_{\min \{ D_k^-F, D_k^+F \}}^{\max \{ D_k^-F, D_k^+F \}} \absolute{t} \, dt\,.
\end{align}
Due to \eqref{F_k - F +} and \eqref{F_k - F -}, it follows from \eqref{Chain rule Kolmogorov remainder term equation 1} that, for every $k \in \N$,
\begin{align}\label{Chain rule Kolmogorov remainder term equation 2}
&\Bigabsolute{\sqrt{p_kq_k}\int_{D_k^-F}^{D_k^+F} ((F+t)g_x(F + t) - Fg_x(F)) \, dt} \notag\\
&\leq \sqrt{p_kq_k} \Big( \absolute{F} + \frac{\sqrt{2\pi}}{4} \Big) \int_{\min \{ -D_kF \1_{\{ X_k = +1 \}}, D_kF \1_{\{ X_k = -1 \}} \} /\sqrt{p_kq_k}}^{\max \{ -D_kF \1_{\{ X_k = +1 \}}, D_kF \1_{\{ X_k = -1 \}} \} /\sqrt{p_kq_k}} \absolute{t} \, dt \notag \displaybreak\\
&= \sqrt{p_kq_k} \Big( \absolute{F} + \frac{\sqrt{2\pi}}{4} \Big) \int_0^{\absolute{D_kF} /\sqrt{p_kq_k}} \absolute{t} \, dt \notag\\
&= \frac{1}{2\sqrt{p_kq_k}} (D_kF)^2 \Big( \absolute{F} + \frac{\sqrt{2\pi}}{4} \Big)\,.
\end{align}
To bound the second summand on the right-hand side of \eqref{Chain rule Kolmogorov remainder term}, for every $k \in \N$, we have to separate the following cases
\begin{align}\label{Chain rule Kolmogorov remainder term equation 3}
&\sqrt{p_kq_k}\int_{D_k^-F}^{D_k^+F} (\1_{\{ F+t \leq x \}} - \1_{\{ F \leq x \}}) \, dt \notag\\[5pt]
&= \sqrt{p_kq_k} \Big( \int_{D_k^-F}^{D_k^+F} (\1_{\{ F+t \leq x \}} - \1_{\{ F \leq x \}}) \, dt \1_{\{ X_k=+1, D_kF \geq 0\}} \notag\\
&\phantom{{}={} \sqrt{p_kq_k} \Big(} +\int_{D_k^-F}^{D_k^+F} (\1_{\{ F+t \leq x \}} - \1_{\{ F \leq x \}}) \, dt \1_{\{ X_k=+1, D_kF < 0\}} \notag\\
&\phantom{{}={} \sqrt{p_kq_k} \Big(} +\int_{D_k^-F}^{D_k^+F} (\1_{\{ F+t \leq x \}} - \1_{\{ F \leq x \}}) \, dt \1_{\{ X_k=-1, D_kF \geq 0\}} \notag\\
&\phantom{{}={} \sqrt{p_kq_k} \Big(} +\int_{D_k^-F}^{D_k^+F} (\1_{\{ F+t \leq x \}} - \1_{\{ F \leq x \}}) \, dt \1_{\{ X_k=-1, D_kF < 0\}} \Big)\,.
\end{align}
Now, for every $k \in \N$,
\begin{align*}
&\Bigabsolute{\int_{D_k^-F}^{D_k^+F} (\1_{\{ F+t \leq x \}} - \1_{\{ F \leq x \}}) \, dt} \1_{\{ X_k=+1, D_kF \geq 0\}} \notag\\
&= \Bigabsolute{\int_{-D_kF/\sqrt{p_kq_k}}^{0} (\1_{\{ F_k^+ + t \leq x \}} - \1_{\{ F_k^+ \leq x \}}) \, dt} \1_{\{ X_k=+1, D_kF \geq 0\}} \notag\\
&\leq \frac{1}{\sqrt{p_kq_k}} D_kF \absolute{\1_{\{ F_k^+ - D_kF/\sqrt{p_kq_k} \leq x \}} - \1_{\{ F_k^+ \leq x \}}} \1_{\{ X_k=+1, D_kF \geq 0\}} \notag\\
&= \frac{1}{\sqrt{p_kq_k}} D_kF \absolute{\1_{\{ F_k^- \leq x \}} - \1_{\{ F_k^+ \leq x \}}} \1_{\{ X_k=+1, D_kF \geq 0\}} \notag\\
&= \frac{1}{\sqrt{p_kq_k}} D_kF \absolute{\1_{\{ F_k^+ > x \}} - \1_{\{ F_k^- > x \}}} \1_{\{ X_k=+1, D_kF \geq 0\}} \notag\\
&= \frac{1}{\sqrt{p_kq_k}} D_kF (\1_{\{ F_k^+ > x \}} - \1_{\{ F_k^- > x \}}) \1_{\{ X_k=+1, D_kF \geq 0\}} \notag\\
&= \frac{1}{p_kq_k} (D_kF) (D_k \1_{\{ F > x \}}) \1_{\{ X_k=+1, D_kF \geq 0\}}\,,
\end{align*}
where in the penultimate step we used that, for every $k \in \N$, $F_k^+ \geq F_k^-$ if $D_kF \geq 0$. The remaining quantities in \eqref{Chain rule Kolmogorov remainder term equation 3} can be bounded in a similar way. For every $k \in \N$, we have
\begin{align*}
&\Bigabsolute{\int_{D_k^-F}^{D_k^+F} (\1_{\{ F+t \leq x \}} - \1_{\{ F \leq x \}}) \, dt} \1_{\{ X_k=+1, D_kF < 0\}} \notag\\
&\qquad \leq \frac{1}{p_kq_k} (D_kF) (D_k \1_{\{ F > x \}}) \1_{\{ X_k=+1, D_kF < 0\}}\,,\\
&\Bigabsolute{\int_{D_k^-F}^{D_k^+F} (\1_{\{ F+t \leq x \}} - \1_{\{ F \leq x \}}) \, dt} \1_{\{ X_k=-1, D_kF \geq 0\}} \notag\\
&\qquad \leq \frac{1}{p_kq_k} (D_kF) (D_k \1_{\{ F > x \}}) \1_{\{ X_k=-1, D_kF \geq 0\}}\,,\\
&\Bigabsolute{\int_{D_k^-F}^{D_k^+F} (\1_{\{ F+t \leq x \}} - \1_{\{ F \leq x \}}) \, dt} \1_{\{ X_k=-1, D_kF < 0\}} \notag\\
&\qquad \leq \frac{1}{p_kq_k} (D_kF) (D_k \1_{\{ F > x \}}) \1_{\{ X_k=-1, D_kF < 0\}}\,.
\end{align*}
Thus, it follows from \eqref{Chain rule Kolmogorov remainder term equation 3} that, for every $k \in \N$,
\begin{align}\label{Chain rule Kolmogorov remainder term equation 4}
&\Bigabsolute{\sqrt{p_kq_k}\int_{D_k^-F}^{D_k^+F} (\1_{\{ F+t \leq x \}} - \1_{\{ F \leq x \}}) \, dt} \leq \frac{1}{\sqrt{p_kq_k}} (D_kF) (D_k \1_{\{ F > x \}}).
\end{align}
Combining \eqref{Chain rule Kolmogorov remainder term equation 2} and \eqref{Chain rule Kolmogorov remainder term equation 4} with \eqref{Chain rule Kolmogorov remainder term} yields that, for every $k \in \N$,
\begin{align}\label{Chain rule Kolmogorov remainder term equation 5}
\absolute{R_k(F)} \leq \frac{1}{2\sqrt{p_kq_k}} (D_kF)^2 \Big( \absolute{F} + \frac{\sqrt{2\pi}}{4} \Big) + \frac{1}{\sqrt{p_kq_k}} (D_kF) (D_k \1_{\{ F > x \}}).
\end{align}
The bound \eqref{KolmogorovBound1} now follows by plugging \eqref{Chain rule Kolmogorov remainder term equation 5} into \eqref{Abstract bound proof equation 5} and by the fact that, for every $G \in \dom(D)$ and $k \in \N$, $D_kG$ is independent of $X_k$.\\
The bound \eqref{KolmogorovBound2} is achieved by further bounding the first and second summand on the right-hand side of \eqref{KolmogorovBound1}. For the first summand note that by virtue of Proposition \ref{intparts} we have $\E[\Gamma_0(F,-L^{-1}F)]=\Var(F)$. An application of the triangle and the Cauchy-Schwarz inequality then yields
\begin{align*}
\E[\absolute{1-\Gamma_0(F,-L^{-1}F)}] \leq \E[\absolute{1-\Var(F)}] + \sqrt{\Var(\Gamma_0(F,-L^{-1}F))}.
\end{align*}
For the second summand several applications of the Cauchy-Schwarz inequality as well as an application of the Minkowski inequality lead to the bound
\begin{align*}
&\E \Big[ \Big( \absolute{F}+\frac{\sqrt{2\pi}}{4} \Big) \sum_{k=1}^\infty \frac{1}{(p_kq_k)^{3/2}} (D_kF)^2 \absolute{-D_kL^{-1}F} (q_k\1_{\{X_k=+1\}}+p_k\1_{\{X_k=-1\}}) \Big]\\
&\leq \sqrt{\E \Big[ \sum_{k=1}^\infty \frac{1}{(p_kq_k)^2} (D_kF)^2(-D_kL^{-1}F)^2 (q_k\1_{\{X_k=+1\}}+p_k\1_{\{X_k=-1\}}) \Big]} \notag\\
&\phantom{{} \leq} \times \sqrt{\E \Big[ (\absolute{F}+1)^2 \sum_{k=1}^\infty \frac{1}{p_kq_k} (D_kF)^2 (q_k\1_{\{X_k=+1\}}+p_k\1_{\{X_k=-1\}}) \Big]} \displaybreak\\
&= \sqrt{2\E \Big[ \sum_{k=1}^\infty \frac{1}{p_kq_k} (D_kF)^2(-D_kL^{-1}F)^2 \Big]} \notag\\
&\phantom{{} \leq} \times \sqrt{\E \Big[ (\absolute{F}+1)^2 \sum_{k=1}^\infty \frac{1}{p_kq_k} (D_kF)^2 (q_k\1_{\{X_k=+1\}}+p_k\1_{\{X_k=-1\}}) \Big]}\\
&\leq \sqrt{2\E \Big[ \sum_{k=1}^\infty \frac{1}{p_kq_k} (D_kF)^2(-D_kL^{-1}F)^2 \Big]} (\E[(\absolute{F}+1)^4])^{1/4}\\
&\phantom{{} \leq} \times \Big( \E \Big[ \Big( \sum_{k=1}^\infty \frac{1}{p_kq_k} (D_kF)^2 (q_k\1_{\{X_k=+1\}}+p_k\1_{\{X_k=-1\}}) \Big)^2 \Big] \Big)^{1/4}\\
&\leq \sqrt{2\E \Big[ \Big\langle \frac{1}{pq} (DF)^2, (-DL^{-1}F)^2 \Big\rangle_{\ell^2(\N)} \Big]}((\E[F^4])^{1/4} + 1)\\
&\phantom{{} \leq} \times \Big( \E \Big[ \Big( \sum_{k=1}^\infty \frac{1}{p_kq_k} (D_kF)^2 (q_k\1_{\{X_k=+1\}}+p_k\1_{\{X_k=-1\}}) \Big)^2 \Big] \Big)^{1/4}\,.
\end{align*}
Finally, the bound \eqref{KolmogorovBound3} readily follows from \eqref{KolmogorovBound2} by the fact that, for every $F=J_m(f)$ with $m\in\N$ and $f\in\ell_0^2(\N)^{\circ m}$, it holds that $-L^{-1}F=\frac{1}{m}F$.
\end{proof}

\begin{proof}[End of the proof of Theorem \ref{mt}]
Since $\Gamma_0(F,F)=\Gamma(F,F)$ by Proposition \ref{gammaprop}, the result in \eqref{mb} is an immediate consequence of Bound \eqref{sb1} as well as of Lemma \ref{vargamma} and Lemma \ref{remlemma}.\\
The bound in \eqref{KolmogorovTheorem} follows by applying Lemma \ref{vargamma} to the first, Lemma \ref{remlemma} to the second and Lemma \ref{KolmogorovLemma} to the third summand on the right-hand side of \eqref{KolmogorovBound3}. For the second summand we also use the fact that by virtue of \eqref{e:cb2} we have
\begin{align*}
&\Big( \E \Big[ \Big( \sum_{k=1}^\infty \frac{1}{p_kq_k} (D_kF)^2 (q_k\1_{\{X_k=+1\}}+p_k\1_{\{X_k=-1\}}) \Big)^2 \Big] \Big)^{1/4}\\
&\qquad = (4\E[(\Gamma_0(F,F))^2])^{1/4} \leq \sqrt{2m}(\E[F^4])^{1/4}\,.
\end{align*}
\end{proof}

\subsection{Alternative proof of Theorem \ref{mt} via a quantitative version of de Jong's CLT}\label{altproof}
In this subsection we sketch how one can use the recent quantitative version of de Jong's CLT from \cite{DP16} to give an alternative proof of the Wasserstein bound in Theorem \ref{mt}. 
In order to do this, we briefly review the concepts of Hoeffding decompositions and degenerate $U$-statistics.\\

For $n\in\N$ let $Z_1,\dotsc,Z_n$ be independent random variables on a probability space $(\Omega,\A,\Prob)$ with values in the respective measurable spaces $(E_1,\mathcal{E}_1),\dotsc,(E_n,\mathcal{E}_n)$. 
Furthermore, let $W=\psi(Z_1,\dotsc,Z_n)\in L^1(\Prob)$, where $\psi:\prod_{j=1}^n E_j\rightarrow\R$ is a $\bigotimes_{j=1}^n \mathcal{E}_j$-measurable function. It is a well-known fact that $W$ can be written as 
\begin{equation}\label{hd}
W=\sum_{J\subseteq[n]}W_J\,,
\end{equation}
where the summands $W_J$, $J\subseteq [n]=\{1,\dotsc,n\}$, satisfy the following properties: 
\begin{enumerate}[(i)]
\item For each $J\subseteq [n]$ the random variable $W_J$ is $\G_J$-measurable, where $\G_J:=\sigma(Z_j,j\in J)$.
\item For all $J,K\subseteq[n]$ we have $\E[W_J\,|\,\G_K]=0$ unless $J\subseteq K$.
\end{enumerate}
It is not hard to see that the summands $W_J$, $J\subseteq [n]$, are $\Prob$-a.s.\ uniquely determined by (i) and (ii) and that they are explicitly given by 
\begin{equation*}
W_J=\sum_{K\subseteq J}(-1)^{\abs{J}-\abs{K}}\E\bigl[W\,\bigl|\,\G_K\bigr]\,,\quad J\subseteq [n]\,.
\end{equation*}
The representation \eqref{hd} of $W$ is called the \textit{Hoeffding decomposition} of $W$ and the $W_J$, $J\subseteq[n]$, are called \textit{Hoeffding components}. Moreover, for $1\leq m\leq n$, 
the functional $W$ is called a not necessarily symmetric, (completely) \textit{degenerate $U$-statistic of order $m$}, if the Hoeffding decomposition \eqref{hd} of $W$ is of the form 
\begin{equation}\label{hddeg}
W=\sum_{\substack{J\subseteq[n]:\\ \abs{J}=m}} W_J\,,
\end{equation}
i.e.\ if $W_J=0$ $\Prob$-a.s.\ whenever $\abs{J}\not=m$.\\

The following quantitative extension of a celebrated CLT by de Jong \cite{deJo90}, which is Theorem 1.3 of the recent paper \cite{DP16} by the first author and Peccati, is the essential ingredient for the present proof.

\begin{prop}\label{dejong}
As above, let $W\in L^4(\Prob)$ be a degenerate $U$-statistic of order $1\leq m\leq n$ of the independent random variables $Z_1,\dotsc,Z_n$ such that 
\[\Var(W)=\sum_{\substack{J\subseteq[n]:\\\abs{J}=m}}\E[W_J^2]=1\,. \]
Define 
\begin{equation*}
\rho^2(W):=\max_{1\leq j\leq n}\sum_{\substack{J\subseteq[n]:\\ \abs{J}=m,\, j\in J}} \E[W_J^2]
\end{equation*}
and let $N$ be a standard normal random variable. Then, 
\begin{align*}
d_\W(W,N) &\leq \Bigl(\sqrt{\frac{2}{\pi}}+\frac{4}{3}\Bigr)\sqrt{\babs{\E[W^4]-3}}+\sqrt{\kappa_m}\Bigl(\sqrt{\frac{2}{\pi}}+ \frac{2\sqrt{2}}{\sqrt{3}}\Bigr)\rho(W)\,,
\end{align*}
where $\kappa_m>0$ is a finite constant which only depends on $m$.
\end{prop}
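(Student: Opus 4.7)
The plan is to apply Stein's method for the Wasserstein distance via an exchangeable-pair construction tailored to the Hoeffding structure of $W$. Let $(Z_1',\dotsc,Z_n')$ be an independent copy of $(Z_1,\dotsc,Z_n)$, pick $I$ uniformly distributed on $[n]$ independent of everything else, and let $W'$ denote $W$ evaluated after replacing $Z_I$ by $Z_I'$. Then $(W,W')$ is exchangeable, and property (ii) of the Hoeffding decomposition together with the degeneracy assumption \eqref{hddeg} yields the linear regression identity $\E[W-W'\mid\F]=(m/n)W$, where $\F:=\sigma(Z_1,\dotsc,Z_n)$. The standard Stein bound for such a pair then reduces the problem to controlling
\[
T:=\frac{n}{2m}\,\E\bigl[(W-W')^2\mid\F\bigr],
\]
together with the cubic correction $\frac{n}{2m}\fnorm{\psi''}\,\E|W-W'|^3$, where $\E[T]=\Var(W)=1$ by orthogonality of the Hoeffding components.

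The bulk of the work is to estimate $\Var(T)$. Expanding $(W-W')^2$ along the Hoeffding components and averaging over $I$ displays $T$ as a sum, over pairs $J,J'$ of $m$-subsets of $[n]$ sharing at least one common index, of (re-centered) products $W_J W_{J'}$. Careful combinatorial book-keeping should then yield the key bound
\[
\Var(T)\le C_1(m)\bigl(\E[W^4]-3\bigr)+C_2(m)\,\rho^2(W),
\]
in direct analogy with the role played by Lemma \ref{keylemma} and Lemma \ref{vargamma} in the Rademacher argument above: the fourth-cumulant piece captures the contributions from pairs with full overlap, while $\rho^2(W)$ naturally absorbs the diagonal contributions indexed by a single common variable, much as in Lemma \ref{comblemma}(b).

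For the cubic remainder, the triangle inequality applied to the Hoeffding expansion of $W-W'$, combined with Minkowski's inequality and the crude estimate $|W-W'|\le\sum_{J\ni I}\bigl(|W_J|+|W_J^{(I)}|\bigr)$, yields a bound of the form $\E|W-W'|^3\lesssim n^{-1}\rho(W)\sqrt{\E[W^4]}$, with a multiplicative constant depending on $m$. Multiplying by $n/(2m)$ then produces a term of order $\rho(W)$ once the fourth-moment factor is absorbed either via the standing $L^4$ hypothesis or by a hypercontractive estimate for degenerate $U$-statistics. Combining the two contributions with the Stein factor bounds $\fnorm{\psi'}\le\sqrt{2/\pi}$ and $\fnorm{\psi''}\le 2$, and optimizing constants, yields the claimed inequality with $\kappa_m$ a combinatorial constant depending only on $m$.

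The main obstacle is the combinatorial identity for $\Var(T)$: one has to cleanly separate the contributions from pairs $(J,J')$ with $|J\cap J'|=m$, which assemble into the fourth cumulant of $W$ via the product-type identities for $U$-statistics, from those with $1\le|J\cap J'|<m$, which force the appearance of $\rho^2(W)$ through a Cauchy--Schwarz bound over the shared indices, in the spirit of \eqref{Influence bound proof 2}. Handling the re-centering terms so as to recover exactly $\E[W^4]-3$ (rather than just $\E[W^4]$) in the leading piece is the delicate point.
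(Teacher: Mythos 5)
First, a point of reference: the paper does not prove Proposition \ref{dejong} at all --- it is imported verbatim as Theorem 1.3 of \cite{DP16}, so there is no in-paper argument to compare against. That said, your exchangeable-pair architecture --- replacing a uniformly chosen coordinate by an independent copy, the linear regression $\E[W-W'\mid\F]=\frac{m}{n}W$ (which you derive correctly from property (ii) and the degeneracy), and the reduction to $T=\frac{n}{2m}\E[(W-W')^2\mid\F]$ plus a cubic remainder with $\E[T]=1$ --- is exactly the strategy actually used in \cite{DP16}, so the skeleton is the right one.

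The proposal nevertheless has two genuine gaps. First, the inequality $\Var(T)\le C_1(m)\bigl(\E[W^4]-3\bigr)+C_2(m)\rho^2(W)$ is not a routine ``combinatorial book-keeping'' step: it \emph{is} the theorem. One must analyze $\E[W_{J_1}W_{J_2}W_{J_3}W_{J_4}]$ over quadruples of $m$-subsets, identify which intersection configurations reassemble into $\E[W^4]-3(\E[W^2])^2$, and show that all remaining configurations are dominated by $\rho^2(W)$; this is the exact analogue of Lemmas \ref{comblemma}--\ref{vargamma} of the present paper and occupies the bulk of \cite{DP16}. Asserting that it ``should'' follow does not prove it. Second, the cubic term as you sketch it does not close: a bound of the form $\E|W-W'|^3\lesssim n^{-1}\rho(W)\sqrt{\E[W^4]}$ leaves, after multiplication by $n/(2m)$, a term $\rho(W)\sqrt{\E[W^4]}$, and the factor $\sqrt{\E[W^4]}$ cannot be absorbed: it is not bounded by a universal constant, and the hypercontractive fallback you mention is unavailable here because the $Z_i$ are arbitrary independent random variables (hypercontractivity of chaoses is special to Rademacher/Gaussian-type laws). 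The correct route, paralleling Lemma \ref{remlemma}, is $\E|W-W'|^3\le\bigl(\E[(W-W')^2]\bigr)^{1/2}\bigl(\E[(W-W')^4]\bigr)^{1/2}$ combined with the estimate $\sum_{i=1}^n\E\bigl[(W-W^{(i)})^4\bigr]\le C(m)\bigl(\E[W^4]-3\bigr)+C'(m)\rho^2(W)$ --- which again rests on precisely the fourth-moment combinatorics the proposal leaves unproved.
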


Let $F=J_m(f)$ be as in the statement of Theorem \ref{mt} and recall the definition of $J_m^{(n)}(f)$ from \eqref{martdef}.

\begin{lemma}\label{jpfn}
For each $n\geq m$, the random variable $J_m^{(n)}(f)$ is a (non-symmetric) \textit{degenerate $U$-statistic of order $m$} of the random variables $Y_1,\dotsc,Y_n$.
\end{lemma}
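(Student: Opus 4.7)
The plan is to use the fact that the Hoeffding decomposition of an $L^1$-functional of independent random variables is unique, so it suffices to exhibit a decomposition of $J_m^{(n)}(f)$ into components indexed by subsets of $[n]$ all having size exactly $m$, and to verify the two defining properties (measurability and centering conditional on strictly smaller $\sigma$-fields).

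First, using the symmetry of $f$ and the fact that $f$ vanishes on diagonals, I would rewrite
\[
J_m^{(n)}(f) = m! \sum_{\substack{J \subseteq [n] \\ \abs{J}=m}} f_J \prod_{i \in J} Y_i =: \sum_{\substack{J \subseteq [n] \\ \abs{J}=m}} W_J,
\]
where $f_J := f(j_1,\dotsc,j_m)$ for any enumeration of $J$ (well-defined by symmetry of $f$) and $W_J := m! f_J \prod_{i \in J} Y_i$. Setting $W_J := 0$ whenever $\abs{J}\neq m$, I claim that $(W_J)_{J\subseteq[n]}$ is the Hoeffding decomposition of $J_m^{(n)}(f)$ with respect to $Y_1,\dotsc,Y_n$.

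The measurability condition (i) is immediate: each $W_J$ is a constant multiple of a product of $Y_i$ for $i \in J$, hence $\G_J$-measurable. For condition (ii), fix $J$ with $\abs{J}=m$ and any $K \subseteq [n]$ with $J \not\subseteq K$. Pick some $j \in J \setminus K$; since $Y_j$ is independent of the family $(Y_i)_{i\in([n]\setminus\{j\})}$, and since $\E[Y_j]=0$, I would compute
\[
\E\bigl[W_J \,\bigl|\, \G_K\bigr] = m! f_J \cdot \E[Y_j] \cdot \prod_{i \in J\cap K} Y_i \cdot \prod_{i\in J\setminus(K\cup\{j\})}\E[Y_i] = 0,
\]
where I used that every $Y_i$ with $i \in J \setminus K$ is independent of $\G_K$ and of the other factors. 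By uniqueness of the Hoeffding decomposition, the constructed $W_J$ coincide almost surely with the genuine Hoeffding components, so $W_J = 0$ for $\abs{J}\neq m$. This is exactly the definition \eqref{hddeg} of a degenerate $U$-statistic of order $m$.

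No real obstacle is expected here: the argument is essentially bookkeeping combined with the basic independence/centering properties of the $Y_k$'s, together with the crucial fact that $f$ vanishes on diagonals (which ensures the decomposition is genuinely indexed by \emph{subsets} rather than general $m$-tuples with possible repetitions). The only small subtlety is to invoke uniqueness of the Hoeffding decomposition rather than trying to verify directly that the conditional expectations $\E[J_m^{(n)}(f)\,|\,\G_K]$ have the required telescoping structure, which would be notationally heavier.
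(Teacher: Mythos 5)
Your proof is correct and follows essentially the same route as the paper: the paper also defines $W_J:=m!\,f(i_1,\dotsc,i_m)Y_{i_1}\cdots Y_{i_m}$ for $J=\{i_1,\dotsc,i_m\}$ and verifies (i), (ii) and \eqref{hddeg} directly via independence and $\E[Y_j]=0$. Your additional appeal to uniqueness of the Hoeffding decomposition is a harmless (and correct) way of packaging the same verification.
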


\begin{proof}
Write $W:=J_m^{(n)}(f)$. Using independence, it is easy to see that, for $J=\{i_1,\dotsc,i_m\}$ with 
$1\leq i_1<\dotsc<i_m\leq n$,  the random variables $W_J$ given by 
\begin{equation}\label{hdw}
W_J:=m! f(i_1,\dotsc,i_m) Y_{i_1}\cdot\dotsc \cdot Y_{i_m}
\end{equation}
satisfy (i), (ii) and \eqref{hddeg}.
\end{proof}

For the alternative proof of Theorem \ref{mt} we will also need the following simple lemma.
\begin{lemma}\label{wasslemma}
Let $X,Y,R$ be integrable real-valued random variables on the probability space $(\Omega,\A,\Prob)$ such that 
$\E\abs{R}^2<\infty$. Then, we have 
\begin{equation*}
d_{\W}(X,Y+R)\leq d_{\W}(X,Y)+\E\abs{R}\leq d_{\W}(X,Y)+\sqrt{\E\abs{R}^2}\,.
\end{equation*}
\end{lemma}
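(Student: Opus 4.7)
The plan is to use the dual definition of the Wasserstein distance directly, together with the Lipschitz property of test functions and Jensen's inequality.

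First I would fix an arbitrary $h \in \Lip(1)$ and split the difference via the triangle inequality:
\[
\babs{\E[h(X)] - \E[h(Y+R)]} \leq \babs{\E[h(X)] - \E[h(Y)]} + \babs{\E[h(Y)] - \E[h(Y+R)]}.
\]
The first term is bounded by $d_\W(X,Y)$ since $h \in \Lip(1)$. For the second term I would use that $h$ is $1$-Lipschitz to deduce $\abs{h(Y) - h(Y+R)} \leq \abs{R}$ pointwise on $\Omega$, which after taking expectations gives $\abs{\E[h(Y)] - \E[h(Y+R)]} \leq \E\abs{R}$.

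Taking the supremum over $h \in \Lip(1)$ on the left-hand side yields $d_\W(X, Y+R) \leq d_\W(X,Y) + \E\abs{R}$, which is the first asserted inequality. The second inequality then follows from Jensen's (or Cauchy--Schwarz) inequality applied to $\abs{R}$, namely $\E\abs{R} \leq \sqrt{\E\abs{R}^2}$, where the hypothesis $\E\abs{R}^2 < \infty$ guarantees that the upper bound is finite.

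There is no genuine obstacle here; the only minor point to verify is that $h(Y)$ and $h(Y+R)$ are integrable so that the expectations in the triangle inequality step are well-defined. This is immediate since $h$ being $1$-Lipschitz implies $\abs{h(z)} \leq \abs{h(0)} + \abs{z}$, and $Y, Y+R$ are integrable by assumption.
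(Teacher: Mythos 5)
Your proof is correct and follows exactly the same route as the paper's: split via the triangle inequality over a fixed $h\in\Lip(1)$, bound the second term by $\E\abs{R}$ using the Lipschitz property, take the supremum, and finish with the Cauchy--Schwarz (Jensen) inequality. The integrability remark is a fine (if unnecessary in the paper's own write-up) addition.
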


\begin{proof}
Let $h\in\Lip(1)$. Then, we have 
\begin{align*}
\babs{\E[h(X)]-\E[h(Y+R)]}&\leq \babs{\E[h(X)]-\E[h(Y)]}+\babs{\E[h(Y)-h(Y+R)]}\\
&\leq d_{\W}(X,Y)+\E\abs{R}\,,
\end{align*}
where we have used that $h$ is $1$-Lipschitz. The result follows by taking the supremum over $h\in\Lip(1)$ and by applying 
the Cauchy-Schwarz inequality.
\end{proof}

\begin{proof}[End of the alternative proof of Theorem \ref{mt}]
Recall $F=J_m(f)$ and, for each $n\geq m$, let  $W_n:=\sigma_n^{-1} J_m^{(n)}(f)$ and $R_n:=F-W_n$, where 
$\sigma_n^2:=\Var(J_m^{(n)}(f))$. From Lemma \ref{wasslemma} we have for $n\geq m$:
\begin{equation}\label{mt1}
d_{\W}(F,N)\leq d_{\W}(W_n,N)+\sqrt{\E\abs{R_n}^2}\,.
\end{equation}
From Lemma \ref{l4conv} we conclude that $\lim_{n\to\infty}\sigma_n^2=\Var(F)=1$ and, furthermore, that 
\begin{align}\label{mt2}
\sqrt{\E\abs{R_n}^2}&\leq\sqrt{\E\Bigl[\bigl(J_m(f)-J_m^{(n)}(f)\bigr)^2\Bigr]}
+\sqrt{\E\babs{J_m^{(n)}(f)}^2}\bigl(1-\sigma_n^{-1}\bigr)\notag\\
&\leq \sqrt{\E\Bigl[\bigl(J_m(f)-J_m^{(n)}(f)\bigr)^2\Bigr]}
+\sqrt{\E\babs{J_m(f)}^2}\bigl(1-\sigma_n^{-1}\bigr)\notag\\
&\longrightarrow0\,, \quad n\to\infty\,.
\end{align}
Moreover, Lemma \ref{jpfn} and Proposition \ref{dejong} imply that 
\begin{align}\label{mt3}
d_{\W}(W_n,N)\leq \Bigl(\sqrt{\frac{2}{\pi}}+\frac{4}{3}\Bigr)\sqrt{\babs{\E[W_n^4]-3}}+\sqrt{\kappa_m}\Bigl(\sqrt{\frac{2}{\pi}}+ \frac{2\sqrt{2}}{\sqrt{3}}\Bigr)\rho(W_n)\,.
\end{align}

Now, Lemma \ref{l4conv} yields that 
\begin{equation}\label{mt4}
\lim_{n\to\infty}\E\bigl[W_n^4\bigr]=\lim_{n\to\infty}\sigma_n^{-4}\E\bigl[J_m^{(n)}(f)^4\bigr]=\E\bigl[F^4\bigr]
\end{equation}
and, recalling the definition of $\rho^2(W_n)$ as well as Lemma \ref{jpfn},
\begin{align}\label{mt5}
\lim_{n\to\infty}\rho^2(W_n)&=\lim_{n\to\infty}\biggl(\sigma_n^{-2}\max_{1\leq j\leq n}
\sum_{\substack{(i_2,\dotsc,i_m)\in (\N\setminus\{j\})^{m-1}:\\ 1\leq i_2<\dotsc<i_m\leq n}}
 (m!)^2 f^2(j,i_2,\dotsc,i_m)\notag\\
&\hspace{4cm}\times\E\bigl[\bigl(Y_j Y_{i_2}\cdot\dotsc\cdot Y_{i_m}\bigr)^2\bigr]\biggr)\notag\\
&=(m!)^2\lim_{n\to\infty}\max_{1\leq j\leq n}
\sum_{\substack{(i_2,\dotsc,i_m)\in (\N\setminus\{j\})^{m-1}:\\ 1\leq i_2<\dotsc<i_m\leq n}}
  f^2(j,i_2,\dotsc,i_m)\notag\\
&=(m!)^2\sup_{j\in\N}\sum_{\substack{(i_2,\dotsc,i_m)\in (\N\setminus\{j\})^{m-1}:\\ 1\leq i_2<\dotsc<i_m<\infty}}
  f^2(j,i_2,\dotsc,i_m)
  =\sup_{j\in\N}\Inf_j(f)\,,
\end{align}
where the next to last identity follows from monotonicity. The result now follows from \eqref{mt1}-\eqref{mt5}.
\end{proof}

\section{Proofs of technical results}\label{proofs}

\begin{proof}[Proof of Lemma \ref{chaosrank}]
We prove (a) and (b) simultaneously. By assumption, $H:=FG\in L^2(\P)$ and, hence, $H$ has a chaotic decomposition of the form 
\begin{equation*}
 H=\E[H]+\sum_{r=1}^\infty J_r(h_r)\,, \quad h_r\in\ell_0^2(\N)^{\circ r}\,.
\end{equation*}
From the second identity in \eqref{stroock} we know that, for $r\in\N$ and for pairwise different $k_1,\dotsc,k_r\in\N$, we have 
\begin{align}\label{cp1}
 h_r(k_1,\dotsc,k_r)&=\frac{1}{r!}\sum_{(i_1,\dotsc,i_m)\in\N^m}f(i_1,\dotsc,i_m)\sum_{(j_1,\dotsc,j_n)\in\N^n}g(j_1,\dotsc,j_n)\notag\\
 &\hspace{3cm} \cdot\E\bigl[Y_{i_1}\cdot\ldots\cdot Y_{i_m}\cdot Y_{j_1}\cdot Y_{j_n}\cdot Y_{k_1}\cdot\ldots\cdot Y_{k_r}\bigr]\,.
\end{align}
Suppose first that $r>m+n$. Then, since $k_1,\dotsc,k_r\in\N$ are pairwise different, for all $(i_1,\dotsc,i_m)\in\N^m$ and $(j_1,\dotsc,j_n)\in\N^n$ we have\\ 
$\{k_1,\dotsc, k_r\}\not\subseteq \{i_1,\dotsc,i_m,j_1,\dotsc,j_n\}$ and thus, by independence, 
\[\E\bigl[Y_{i_1}\cdot\ldots\cdot Y_{i_m}\cdot Y_{j_1}\cdot Y_{j_n}\cdot Y_{k_1}\cdot\ldots\cdot Y_{k_r}\bigr]=0\]
implying $h_r(k_1,\dotsc,k_r)=0$. This proves (a). To prove (b) suppose that $r=m+n$. Then, by the same argument we see that in \eqref{cp1} all summands are equal to zero unless 
$\{k_1,\dotsc, k_r\}=\{i_1,\dotsc,i_m,j_1,\dotsc,j_n\}$. Writing 
\begin{align*}
 \mathcal{M}(k_1,\dotsc,k_{r})&:=\Bigl\{\bigl((i_1,\dotsc,i_m),(j_1,\dotsc,j_n)\bigr)\in\N^m\times \N^n\,:\, \notag\\
 &\hspace{3cm}\{k_1,\dotsc, k_r\}=\{i_1,\dotsc,i_m,j_1,\dotsc,j_n\}\Bigr\}
\end{align*}
we have 
\[\E\bigl[Y_{i_1}\cdot\ldots\cdot Y_{i_m}\cdot Y_{j_1}\cdot Y_{j_n}\cdot Y_{k_1}\cdot\ldots\cdot Y_{k_r}\bigr]=1\]
for all $((i_1,\dotsc,i_m),(j_1,\dotsc,j_n))\in\mathcal{M}(k_1,\dotsc,k_{r})$ and, from \eqref{cp1}, we thus obtain that 
\begin{align*}
 h_{m+n}(k_1,\dotsc,k_{m+n})&=\frac{1}{(m+n)!}\sum_{((i_1,\dotsc,i_m),(j_1,\dotsc,j_n))\in\mathcal{M}(k_1,\dotsc,k_{r})}f(i_1,\dotsc,i_m)g(j_1,\dotsc,j_n)\\
 &=f\tilde{\otimes}g(k_1,\dotsc,k_{m+n})\,,
\end{align*}
proving (b). 
 \end{proof}

\normalem
\bibliography{radebolux}{}
\bibliographystyle{alpha}
\end{document}